\documentclass[10pt]{amsart}
\usepackage[latin1]{inputenc}
\usepackage{amsmath}
\usepackage{amsthm}
\usepackage{amssymb} 
\usepackage{mathtools}
\usepackage{amsfonts, verbatim,tikz,bbold, tikz-cd}
\usepackage[all,cmtip]{xy}
\usepackage[dvipsnames]{xcolor}  
\usepackage{multirow}
\usepackage[
    pagebackref=true,
    colorlinks=true,
    citecolor=Blue,     
    linkcolor=Maroon,   
    urlcolor=teal
]{hyperref}
\usepackage{microtype, wrapfig,textcomp,mathrsfs,csquotes,fbb}
\newtheorem{theorem}{Theorem}[section]
\newtheorem{corollary}[theorem]{Corollary}
\newtheorem{lemma}[theorem]{Lemma}  
\newtheorem{remark}[theorem]{Remark}
\newtheorem{prop}[theorem]{Proposition} 
\newtheorem{ques}[theorem]{Question}

\newtheorem{example}[theorem]{Example}

\usepackage{cleveref}
\crefalias{corollary}{corollary}
\crefalias{theorem}{theorem}
\crefalias{lemma}{lemma}
\crefalias{prop}{prop}
\crefalias{ques}{ques}
\crefalias{defn}{defn}

\newcommand{\C}{\mathbb C}

\newcommand{\nc}{\newcommand}

\nc{\thm}{\theorem}
\nc{\cor}{\corollary}
\nc{\mc}{\mathcal}
\nc{\mb}{\mathbb}
\nc{\mf}{\mathfrak}
\nc{\ul}{\underline}
\nc{\ol}{\overline}
\nc{\N}{\mb N}
\nc{\R}{\mb R}
\nc{\rP}{\mb P}
\nc{\Z}{\mb Z}
\nc{\Q}{\mb Q}
\nc{\F}{\mb F}
\nc{\gm}{\gamma}
\nc{\Hom}{Hom}
\nc{\SO}{SO}
\nc{\Spin}{Spin}
\nc{\ord}{\mathrm{ord}}
\nc{\X}{\mf{X}}
\nc{\lip}{\langle}
\nc{\rip}{\rangle}
\nc{\dm}{{D}_{2m}}
\nc{\dn}{{D}_{2n}}
\nc{\gl}{\mathrm{GL}_2(\mathbb{C})}
\nc{\mbz}{\mathbb{Z}_2}

\nc{\dmo}{\DeclareMathOperator}
\dmo{\GL}{GL}
\dmo{\sgn}{sgn}
\dmo{\res}{res}
\dmo{\diag}{diag}
\dmo{\Or}{O}
\dmo{\Pin}{Pin}
\title[Chern Classes of Representations of Dihedral and Quaternion groups]{Realizable Cohomology Classes of Dihedral and Generalized Quaternion Groups} 
\author{Sutirtha Datta, Jyotirmoy Ganguly, and Rohit Joshi}
\date{}

\address{IISER Pune, Dr. Homi Bhabha Road,Pune-411008, Maharashtra, India}
\email{sutirtha2702@gmail.com}

\address{MURTI Research Center, GITAM (Deemed to be University),\\
	NH 207, Nagadenahalli, Doddaballapura Taluk, Bengaluru Rural,\\
	Karnataka - 562163, India.}
\email{jyotirmoy.math@gmail.com}  

\address{IISER Pune, Dr. Homi Bhabha Road,Pune-411008, Maharashtra, India} \email{rohitsj2004@gmail.com} 
\keywords{Chern classes, group cohomology, dihedral groups, quaternion groups}
 \subjclass[2020]{Primary 20C15, 20J06; Secondary 55R40}
\begin{document}
 
\begin{abstract}
    We compute the Chern classes of irreducible representations for the dihedral groups $D_{2n}$ and the generalized quaternion groups $Q_{2^n}$. Utilizing these computations, we investigate whether every even-dimensional class in the integral group cohomology of these families can be realized as a Chern class of some complex representation. It turns out that this realizability question answers in the affirmative for the generalized quaternion groups. In contrast, it turns out that the answer for dihedral groups is erratic.
\end{abstract}
 \maketitle

 \tableofcontents

\section{Introduction}

For a finite group $G$, consider a complex representation $\rho:G\to \GL_n(\C)$. The integral cohomology ring of the classifying space $BGL_n(\C)$ is a polynomial ring $\mathbb{Z}[c_1, c_2, \dots, c_n]$, where $c_k\in H^{2k}(BGL_n(\C),\Z)$. The reprsentation $\rho$ induces a  map on classifying spaces $\rho^*: H^*(BGL_n(\C), \mathbb{Z}) \to H^*(G, \mathbb{Z})$ that allows us to define the $i$-th \emph{Chern class} of $\rho$ as 
$$c_i(\rho) := \rho^*(c_i) \in H^{2i}(G, \mathbb{Z}) \quad ; \quad 1 \leq i \leq  n.$$
For exposition on Chern classes of representations of connected reductive complex algebraic groups in terms of highest weights, see \cite{joshi2026characteristic}.

A natural question is which cohomology classes arise as Chern classes of representations of $G$. Accordingly, we introduce the following definition. 
For a finite group $G$, we take
$$\mathcal{RC}(G)=\{\alpha \in \mathcal{H}^{\text{even}}(G, \mb Z) \mid  \alpha = c_k(\pi)\, \text{for some representation $\pi$ of $G$ and $k>0$}\},$$
where $\mathcal{H}^{\text{even}}(G,\Z)=\bigcup_{k> 0}H^{2k}(G,\Z).$
We call the elements of $\mathcal{RC}(G)$ to be \textit{realizable} cohomology classes for the group $G$. We write $H_{C}^*(G,\Z)$ to denote the subring of $H^*(G, \Z)$ generated by the elements of $\mathcal{RC}(G)$. 
With the notations introduced above we ask the following question.

\begin{ques}\label{ques1}
    For a finite group $G$, is $\mathcal{RC}(G)=\mathcal{H}^{\text{even}}(G,\Z)$?
    In other words, is every even-degree integral cohomology class $\alpha \in H^{2k}(G, \mathbb{Z})$ realizable ?
\end{ques}
For expositions in the similar lines, see \cite{naolekar2012realizing}.
For orthogonal representations of a finite group $G$,  we define the following in an analogous manner:
$$\mathcal{RW}(G)=\{ w_i(\pi)\in \mathcal{H}(G,\Z_2)\mid \pi\,\, \text{is an orthogonal representation of $G$},\,\,i\geq 1\},$$
where $\mathcal{H}(G,\Z_2)=\bigcup_{k\geq 1}H^k(G,\Z_2).$ We call the elements of $\mathcal{RW}(G)$ to be \textit{realizable} cohomology classes with $\Z_2$ coefficients for the group $G$. One may ask a similar question to Question \ref{ques1} for Stiefel Whitney classes of orthogonal representations of a group $G$. It is worth mentioning that the sets $\mathcal{RC}(G)$ and $\mathcal{RW}(G)$ admit no group structure.

In this paper, we explicitly compute the Chern classes of irreducible representations for both the dihedral groups $D_{2n}$ and the generalized quaternion groups $Q_{2^n}$. For $Q_{2^n}$ the computation yields a particularly clean and explicit answer (see \Cref{thm: total CC of Q_2^n}). The dihedral groups, on the contrary, require a case by case analysis (see \Cref{sec: chern classes of d2n}). 

As an application of these computations, we settle the realizability problem \ref{ques1} for these families, providing an affirmative answer for the generalized quaternion groups. 
\begin{theorem}\label{thm: every cohomology class of Q_2^n is realizable}
    For $Q_{2^n}$, we realize every cohomology class as the Chern class for some representation.
\end{theorem}
For, $D_{2n}$, there is no such general result. The relevant results on Dihedral groups are established in \Cref{thm: D10 realizable}, Corollary \ref{cor: D_6m, m odd} and Corollary \ref{thm: non realizable class for D8k, k odd}.

\textbf{Layout of the article}: 
In Section \ref{sec: Notation and prelim}, we present an overview of the representation theory of the dihedral groups and generalized quaternion groups. In \Cref{sec: klein 4 group} we inspect the realizable cohomology classes of the Klein $4$-group. The chern classes for the irreducible representations of the Dihedral groups are computed in Section \ref{sec: chern classes of d2n} and we answer the realizability question in Section \ref{sec: realizability of chern classes of D2n}. Section \ref{sec: Chern classes for Q2n} is devoted to figuring out the Chern classes of irreducible representations of $Q_{2^n}$ and the key result is \Cref{thm: total CC of Q_2^n}. In Section \ref{sec: realizability of Q2n}, the realizability question for $Q_{2^n}$ is addressed and the final result \Cref{thm: every cohomology class of Q_2^n is realizable} states that for $Q_{2^n}$, we realize every cohomology class as the Chern class for some representation. Finally, in \Cref{sec: realizable SW classes}, we explore the mod $2$ cohomology classes which are realizable as Stiefel-Whitney Classes.

{\bf Acknowledgements:} The authors would like to thank Steven Spallone for helpful conversations.

\section{Notations and Preliminaries}\label{sec: Notation and prelim}

\subsection{Dihedral Groups}~~\vspace{1ex}

Let $\dn$ be the dihedral group of order $2n$ with the presentation $$\dn= \langle r,s \mid r^n=s^2=1, rsrs=1\rangle.$$

For $n$ even, the group $D_{2n}$ has four linear characters
$$\mathbb{1} : (r,s) \mapsto (1,1),$$
$$\chi_r : (r,s) \mapsto (-1,1),$$
$$\chi_s: (r,s) \mapsto (1,-1),$$
$$\chi_{rs}: (r,s) \mapsto (-1,-1).$$
We denote the irreducible degree $2$ representations of $D_{2n}$ by $\sigma_k$, where $1\leq k\leq n/2-1$. They are defined as
$$\sigma_k(r) = \begin{pmatrix}
  \zeta_n^k & 0      \\
    0 & \zeta_n^{-k}
\end{pmatrix} , \sigma_k (s)= \begin{pmatrix}
    0 &1 \\
    1 &0
\end{pmatrix},$$
where $\zeta_n$ denotes the primitive $n$-th root of unity.

For $n$ odd, there are only $2$ linear characters $\mathbb{1}$ and $\chi_s$. In this case the $2$ dimensional irreducible representations are given by $\sigma_k$ as defined above for $1\leq k\leq (n-1)/2$.

\begin{prop}
For $n$ even, we have
\begin{enumerate}
\item
$\sigma_k\otimes \chi_r\cong \sigma_{n/2-k}$.
\item
$\sigma_k\otimes \chi_s\cong \sigma_k$.
\item
$\sigma_k\otimes \chi_{rs}\cong \sigma_{n/2-k}$.
\end{enumerate}
\end{prop}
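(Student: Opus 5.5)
The cleanest route is to compare characters, since two complex representations of a finite group are isomorphic exactly when their characters agree. First I will record the character of $\sigma_k$ on every element of $D_{2n}$. From the defining matrices, $\sigma_k(r^j)=\diag(\zeta_n^{kj},\zeta_n^{-kj})$, so
\[
\chi_{\sigma_k}(r^j)=\zeta_n^{kj}+\zeta_n^{-kj}=2\cos(2\pi kj/n).
\]
Next, $\sigma_k(r^js)=\diag(\zeta_n^{kj},\zeta_n^{-kj})\begin{pmatrix}0&1\\1&0\end{pmatrix}$ is off-diagonal, so $\chi_{\sigma_k}(r^js)=0$. The character of a tensor product with a linear character $\lambda$ is the pointwise product $\chi_{\sigma_k}\cdot\lambda$, so each of the three claims reduces to an identity of class functions.

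For (2), note that $\chi_s(r^j)=1$ and $\chi_s(r^js)=-1$. Hence $\chi_{\sigma_k}\chi_s$ agrees with $\chi_{\sigma_k}$ on rotations. On reflections both sides vanish, so the two characters coincide.

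For (1), note that $\chi_r(r^j)=(-1)^j=\zeta_n^{(n/2)j}$. Then
\[
\chi_{\sigma_k}(r^j)\chi_r(r^j)=\zeta_n^{(k+n/2)j}+\zeta_n^{-(k-n/2)j}.
\]
Using $\zeta_n^{n/2}=\zeta_n^{-n/2}=-1$, I rewrite this as $\zeta_n^{-(n/2-k)j}+\zeta_n^{(n/2-k)j}=\chi_{\sigma_{n/2-k}}(r^j)$. On reflections both characters are $0$. Item (3) then follows because $\chi_{rs}=\chi_r\chi_s$: we get $\sigma_k\otimes\chi_{rs}\cong(\sigma_k\otimes\chi_s)\otimes\chi_r\cong\sigma_k\otimes\chi_r\cong\sigma_{n/2-k}$.

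No step is a genuine obstacle; the only care needed is the index range. For $1\le k\le n/2-1$ we have $1\le n/2-k\le n/2-1$, so $\sigma_{n/2-k}$ is again one of the listed irreducibles. Alternatively, one can give an explicit intertwiner. For instance, $\diag(1,-1)$ conjugates $\sigma_k\otimes\chi_s$ to $\sigma_k$. For (1), the representation $\sigma_k\otimes\chi_r$ sends $r\mapsto\diag(-\zeta_n^k,-\zeta_n^{-k})=\diag(\zeta_n^{k+n/2},\zeta_n^{-(k+n/2)})$ and fixes $\sigma_k(s)$, and conjugating by the swap matrix gives exactly $\sigma_{n/2-k}$.
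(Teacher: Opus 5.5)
Your proof is correct. The character values are right: $\zeta_n^{kj}+\zeta_n^{-kj}$ on $r^j$ and $0$ on $r^js$. Multiplying by the linear characters gives (1) and (2), and (3) follows from $\chi_{rs}=\chi_r\chi_s$. The check that $n/2-k$ stays in $\{1,\dots,n/2-1\}$ is correct, and so are both explicit intertwiners, $\diag(1,-1)$ for (2) and the swap matrix for (1).

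Your route does differ from the paper's. The paper's proof is only a citation of \cite[Theorem 4.4.3]{hirzebruch1966topological}. Elsewhere the paper uses that reference for the Chern-class formula of a tensor product, for example $c_1(\varphi_1\otimes\varphi_2)=c_1(\varphi_1)+c_1(\varphi_2)$ and the formula for $c_2(\rho_k\otimes\eta)$. A statement about characteristic classes cannot by itself establish an isomorphism of representations, since Chern classes do not determine a representation. In fact, the paper later uses the logic in the opposite direction: in the $n\equiv 2 \pmod 4$ computation it takes $\sigma_k\otimes\chi_r\cong\sigma_{n/2-k}$ as input and derives relations among Chern classes from it. So your character comparison, or equivalently your explicit conjugation, is a self-contained proof that the paper does not actually supply.
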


\begin{proof}
   Follows from \cite[Theorem 4.4.3]{hirzebruch1966topological}.
\end{proof}

  \vspace{1ex}
\subsection{Generalized Quaternion Groups} ~~\vspace{1ex}

Here we review the representation theory for the generalized quaternion group $Q_{2^n}$ of order $2^n$ with presentation:
$$Q_{2^n}=\langle x,y\mid x^{2^{n-1}}=y^4=1, x^{2^{n-2}}=y^2, y^{-1}xy=x^{-1} \rangle,$$
for $n\geq 2$. See \cite[Section 1]{pitt1973free} for reference.

The group $Q_{2^n}$ has four linear characters:
$$\mathbb{1}: (x,y)\rightarrow (1,1),$$
$$\epsilon: (x,y)\rightarrow (1,-1),$$
$$\eta: (x,y)\rightarrow (-1,1),$$
$$\epsilon\eta: (x,y)\rightarrow (-1,-1).$$

\begin{comment}
Another way to think about this action is by considering the $2$-dimensional representation $\sigma_1 : Q_{2^n} \to U(2).$ Since the image of $\sigma_1$ is $SU(2)$, we get a free action of $Q_{2^n}$ on $S^3.$
The degree of irreducible representations of $Q_{2^n}$ is atmost $2$. One can easily check that $A=\langle x \rangle$ is an index $2$ abelian subgroup of $Q_{2^n}$. 
\end{comment}
The $2$-dimensional irreducible representations $\rho_k$ of $Q_{2^n}$ are enumerated by $k$ where $1\leq k\leq 2^{n-2}-1$. They are defined as
$$ \rho_k(x)= \begin{pmatrix}
    \zeta_{2^{n-1}}^k & 0\\0 &\zeta_{2^{n-1}}^{-k} 
\end{pmatrix}, \quad \quad \rho_k(y)= \begin{pmatrix}
    0 & (-1)^k\\
    1 & 0
\end{pmatrix}$$
 where $\zeta_{2^{n-1}}$ is the primitive $2^{n-1}$-st roots of unity.
 
\begin{prop}
We have
\begin{enumerate}
\item
$\rho_k\otimes \epsilon\cong \rho_k$.
\item
$\rho_k\otimes \eta\cong \rho_{2^{n-2}-k}$.
\item
$\rho_k\otimes \epsilon\eta\cong \rho_{2^{n-2}-k}$.
\end{enumerate}
\end{prop}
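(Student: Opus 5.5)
The plan is to compare characters. A representation of a finite group over $\C$ is determined up to isomorphism by its character, and the character of $\rho_k\otimes\chi$ for a linear character $\chi$ is $\chi\cdot\operatorname{tr}\rho_k$. Every element of $Q_{2^n}$ has the form $x^j$ or $x^jy$ with $0\le j<2^{n-1}$, so it suffices to evaluate both sides on these two families.

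First we record the character of $\rho_k$. On $x^j$ it equals $\zeta_{2^{n-1}}^{kj}+\zeta_{2^{n-1}}^{-kj}$. On $x^jy$ the matrix $\rho_k(x^j)\rho_k(y)$ is a diagonal matrix times an antidiagonal matrix, so it is antidiagonal and has trace $0$. Hence $\operatorname{tr}\rho_k$ vanishes off $\langle x\rangle$.

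For (1), note that $\epsilon$ is trivial on $\langle x\rangle$. The characters $\epsilon\cdot\operatorname{tr}\rho_k$ and $\operatorname{tr}\rho_k$ therefore agree on $\langle x\rangle$, and both vanish on the coset $\langle x\rangle y$. So they are equal and $\rho_k\otimes\epsilon\cong\rho_k$.

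For (2), we have $\eta(x^j)=(-1)^j$. Since $\zeta_{2^{n-1}}^{2^{n-2}}=-1$, this gives
$$(-1)^j\bigl(\zeta_{2^{n-1}}^{kj}+\zeta_{2^{n-1}}^{-kj}\bigr)=\zeta_{2^{n-1}}^{(2^{n-2}+k)j}+\zeta_{2^{n-1}}^{-(2^{n-2}+k)j}.$$
Replacing the exponent $2^{n-2}+k$ by its negative modulo $2^{n-1}$, namely $2^{n-2}-k$, leaves this expression unchanged. So it equals $\operatorname{tr}\rho_{2^{n-2}-k}(x^j)$. On the coset $\langle x\rangle y$ both characters vanish. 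This gives $\rho_k\otimes\eta\cong\rho_{2^{n-2}-k}$.

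Two side points need checking. The index $2^{n-2}-k$ lies in the range $1\le 2^{n-2}-k\le 2^{n-2}-1$, so $\rho_{2^{n-2}-k}$ is one of the listed representations. The parity factor $(-1)^k$ in $\rho_k(y)$ changes with $k$ when $2^{n-2}$ is even, but it does not affect the argument, because only the trace enters and the trace on the $y$-coset is zero regardless.

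Part (3) follows from (1) and (2):
$$\rho_k\otimes\epsilon\eta\cong(\rho_k\otimes\epsilon)\otimes\eta\cong\rho_k\otimes\eta\cong\rho_{2^{n-2}-k}.$$
There is no real obstacle here. The only point needing care is the identity $\zeta_{2^{n-1}}^{2^{n-2}}=-1$ together with the sign bookkeeping, which is what produces the reindexing $k\mapsto 2^{n-2}-k$.
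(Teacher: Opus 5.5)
Your proof is correct and complete. The character of $\rho_k$ is $\zeta_{2^{n-1}}^{kj}+\zeta_{2^{n-1}}^{-kj}$ on $x^j$ and vanishes on the coset $\langle x\rangle y$. Twisting by $\epsilon$ therefore changes nothing. Twisting by $\eta$ multiplies by $(-1)^j=\zeta_{2^{n-1}}^{2^{n-2}j}$, which moves the exponent to $\pm(2^{n-2}-k)$ modulo $2^{n-1}$. Part (3) follows by composing the first two.

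This is a genuinely different route from the paper, which disposes of the proposition by citing \cite[Theorem 4.4.3]{hirzebruch1966topological}. The paper's own later uses of that reference show what it is: the formula for the Chern classes of a tensor product with a line bundle, e.g.\ $c_2(\rho_k\otimes\eta)=c_2(\rho_k)+c_1(\rho_k)c_1(\eta)+c_1(\eta)^2$. A statement about characteristic classes does not by itself yield an isomorphism of representations. In fact the logical direction in the paper runs the other way: it later combines this proposition with that very formula to derive $c_1(\eta)\bigl(c_1(\epsilon)+c_1(\eta)\bigr)=0$. Your character comparison is what actually establishes the statement, and it is elementary and self-contained.

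One small point: your aside about the sign $(-1)^k$ in $\rho_k(y)$ is worded a little oddly. For $n\ge 3$ the number $2^{n-2}$ is even, so $(-1)^{2^{n-2}-k}=(-1)^k$ and the sign does not change under $k\mapsto 2^{n-2}-k$. As you note, it is irrelevant anyway, since only the vanishing trace on $\langle x\rangle y$ is used.
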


\begin{proof}
   Follows from \cite[Theorem 4.4.3]{hirzebruch1966topological}.
\end{proof}
 \vspace{1ex}
\subsection{Cohomology of the Cyclic Groups}
Let $C_n$ denote the cyclic group of order $n$ with a generator $\alpha$. the cohomology ring of $C_n$ is well known.
$$H^*(C_n,\Z)=\frac{\Z[x]}{\langle nx\rangle},$$
where $x=c_1(\chi_1)$, $\chi_1(\alpha)=\zeta_n=e^{2\pi i/n}$ and $1\leq k\leq n-1$.

Let $G$ be a finite group and $G$ acts on $\mathbb{C}$ trivially. Then one has the isomorphism
$$\mathrm{Hom}(G,\C^{\times})\cong H^2(G,\Z).$$
For any character $\chi$ of $G$, the explicit isomorphism is given by $\chi\mapsto c_1(\chi)$, see \cite[Appendix (3)]{atiyah1961characters}, where $c_1$ denotes the first Chern class.

\section{Realizable Integral Cohomology Classes of Klein 4 group}\label{sec: klein 4 group}
Consider the Klein 4 group $V_4=C_2\times C_2$. We have 
$$H^*(V_4,\mathbb{Z})=\dfrac{\Z[x,y]}{\langle 2x,2y\rangle},$$
where $x=c_1(\sgn\otimes \mathbb{1})$ and $y=c_1(\mathbb{1}\otimes \sgn)$. Any representation $\pi$ of $V_4$ can be written as 
$$\pi=m\mathbb{1}\oplus a(\sgn\otimes\mathbb{1})\oplus b(\mathbb{1}\otimes \sgn)\oplus c(\sgn\otimes \sgn).$$
To write down the total Chern class of $\pi$ we use the fact $c_1(\varphi_1\otimes\varphi_2)=c_1(\varphi_1)+c_1(\varphi_2)$, where $\varphi_1$ and $\varphi_2$ are $1$-dimensional representations of $V_4$ (see \cite[Theorem 4.4.3]{hirzebruch1966topological}).

\begin{theorem}
We have $\mathcal{RC}(V_4)\neq \mathcal{H}^{\mathrm{even}}(V_4, \mathbb{Z})$.    
\end{theorem}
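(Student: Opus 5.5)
The plan is to compute the total Chern class of an arbitrary representation and then exhibit a single even-degree class that never occurs among its homogeneous components. Write $\pi=m\mathbb{1}\oplus a(\sgn\otimes\mathbb{1})\oplus b(\mathbb{1}\otimes \sgn)\oplus c(\sgn\otimes \sgn)$. By the Whitney sum formula and the stated fact $c_1(\varphi_1\otimes\varphi_2)=c_1(\varphi_1)+c_1(\varphi_2)$, we have $c_1(\sgn\otimes\sgn)=x+y$. Hence
$$c(\pi)=(1+x)^a(1+y)^b(1+x+y)^c \in \frac{\Z[x,y]}{\langle 2x,2y\rangle}.$$
Since all positive-degree classes are $2$-torsion, we may compute in $\F_2[x,y]$ in positive degrees. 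Each $c_k(\pi)$ is then the degree-$k$ part of a product of powers of $1+x$, $1+y$, $1+x+y$. Over $\F_2$, these powers only depend on the binary expansions of $a,b,c$.

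The key step is to find a degree $k$ and a nonzero homogeneous polynomial of degree $k$ that is not the degree-$k$ component of any such product. I would try $k=2$, i.e. classes in $H^4$, which are combinations of $x^2,xy,y^2$. The degree-2 part of $(1+x)^a(1+y)^b(1+x+y)^c$ over $\F_2$ is
$$\tbinom a2x^2+\tbinom b2y^2+\tbinom c2(x+y)^2+abxy+ac\,x(x+y)+bc\,y(x+y).$$
The coefficient of $xy$ is $ab+ac+bc \pmod 2$. I would then enumerate the parities of $a,b,c$ and of $\binom a2,\binom b2,\binom c2$ (that is, $a,b,c \bmod 4$) to list which of the seven nonzero classes occur.

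I expect classes like $x^2+xy+y^2$ to be realizable. For example, $a=b=1$, $c=0$ gives $xy$, which is not it, but $a=2,b=0,c=1$ should be checked. The main obstacle is that degree 2 might turn out to be fully realizable. In that case I would move to degree 3 or 4, and perhaps use the $\mathrm{GL}_2(\F_2)$-symmetry permuting $x,y,x+y$ to reduce the casework.

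A cleaner fallback is a structural obstruction. Over $\F_2$, $c(\pi)$ factors into linear forms $\ell\in\{x,y,x+y\}$ in the ring. Also, $(1+\ell)^{2^j}=1+\ell^{2^j}$, so in low degrees only limited combinations arise. I would test $x^2y+xy^2=xy(x+y)$ against $x^3+y^3$, and similar pairs, in degree 3 with $a,b,c<8$. The finite check is legitimate because only $a,b,c \bmod 2^{\lceil\log_2(k+1)\rceil}$ matter for degree $k$. Once a missing class, say $\alpha\in H^{2k}$, is found by this finite enumeration, it is not in $\mathcal{RC}(V_4)$, which proves the theorem.
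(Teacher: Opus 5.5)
Your framework is correct and is essentially the paper's route. Working in $\F_2[x,y]$ in positive degrees and expanding $(1+x)^a(1+y)^b(1+x+y)^c$ is valid. So is the observation that the part of degree at most $k$ depends only on $a,b,c$ modulo $2^m$ with $2^m>k$, since $(1+\ell)^{2^m}=1+\ell^{2^m}$ in characteristic $2$.

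The gap is that you never produce a non-realizable class. The theorem \emph{is} the assertion that one exists. Your proposal defers exactly this to an enumeration that is neither carried out nor tied to a specific class, so as written nothing has been proved.

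Your hedging shows the choice of class really matters. Every nonzero class of $H^4(V_4,\Z)$ is realizable:
\begin{itemize}
\item $x^2$, $y^2$, $x^2+y^2$ come from $a=2$, $b=2$, $c=2$;
\item $xy$, $x^2+xy$, $xy+y^2$ come from a sum of two distinct nontrivial characters;
\item $x^2+xy+y^2$, the second elementary symmetric function of $x,y,x+y$, comes from $a=b=c=1$. Your trial $a=2,b=0,c=1$ only gives $x^2$.
\end{itemize}
Also, of your two degree-$3$ candidates, $x^2y+xy^2=xy(x+y)$ is realizable, as $c_3$ of that same representation with $a=b=c=1$.

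The missing step is the verification for $x^3+y^3$, which is the class the paper uses. Work modulo $2$, using $(x+y)^2=x^2+y^2$, $(x+y)^3=x^3+x^2y+xy^2+y^3$ and Vandermonde's identity. The coefficients of $x^3$, $y^3$ and $x^2y$ in $c_3(\pi)$ are then $\binom{a+c}{3}$, $\binom{b+c}{3}$ and $\binom c3+\binom a2(b+c)+b\binom c2+abc$.

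Since $\binom N3$ is odd if and only if $N\equiv 3\pmod 4$, realizing $x^3+y^3$ forces $a\equiv b\equiv 3-c\pmod 4$. In each of the four cases $c\equiv 0,1,2,3\pmod 4$ the $x^2y$ coefficient is odd. The only odd term is respectively $\binom a2(b+c)$, $\binom a2(b+c)$, $b\binom c2$, $\binom c3$.

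Including this computation turns your plan into a proof. The paper reaches the same contradiction through the $xy^2$ coefficient, in a slightly different parametrization. Without this step, your proposal is a search strategy rather than an argument.
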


\begin{proof}
To prove this, we show that $x^3+y^3\notin \mathcal{RC}(V_4)$.  Let $\pi$ be an arbitrary representation of $V_4$. Without loss of generality, we can assume 
$$c(\pi)=(1+x)^a ((1+x)(1+y))^b (1+x+y)^c.$$
\begin{comment}
We have 
\begin{align*}
\left[((1+x)(1+y))^b (1+x+y)^c\right]_{3^{-}} =& 1 + (b+c)(x+y) +\binom {b+c}2(x^2+y^2)+ bxy\\
& + \binom {b+c}{3}(x^3 + y^3) +  \left(\binom {b+c}3 + bc\right)(x^2y + xy^2)
\end{align*}
\end{comment}
Collecting the third degree homogeneous terms we obtain 
\begin{align*}
c_3(\pi)=& \binom {a+b+c}{3}x^3+\binom{b+c}{3}y^3+\left(\binom a2 (b+c) + ab + \binom {b+c}3 + bc \right)x^2y\\
&+\left(a\binom{b+c}{2}+\binom {b+c}3 + bc\right)xy^2.    
\end{align*}
Suppose $c_3(\pi)=x^3+y^3$ for some representation $\pi$ of $V_4$. To satisfy this, we require the condition that the coefficients of $x^3$ and $y^3$ are odd, while the coefficients of $x^2y$ and $xy^2$ are even. If $\binom{a+b+c}{3}$ and $\binom{b+c}{3}$ both are odd, then elementary calculations show that both $a$ and $bc$ are even. But then the coeffient of $xy^2$ becomes odd. Therefore the class $x^3+y^3$ is not realizable.
\end{proof}

\section{Chern Classes of representations of $D_{2n}$}\label{sec: chern classes of d2n}

\vspace{1ex}
\subsection{The Group $D_{2n}$ with $n$ is odd}
~\vspace{1ex}

The integral cohomology ring for the Dihedral groups are given as follows: 

\begin{thm}\cite[Theorem 5.2]{handel1993products}\label{thm: cohomology of D_2n for n odd}
   Let $n\geq 3$ be odd. Then $$H^{*}(\dn, \Z)=\frac{\Z[a_2,d_4]}{\left( 2a_2,nd_4 \right)}$$
   where $a_2$ has degree $2$ and $d_4$ has degree $4.$
\end{thm}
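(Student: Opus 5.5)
The statement is the cohomology ring of $\dn$ for $n$ odd. I would prove it by splitting off the $2$-primary and $p$-primary parts, using the fact that $|\dn|=2n$ with $n$ odd. For $i>0$ the group $H^i(\dn,\Z)$ is finite and annihilated by $2n$, so it splits as $H^i(\dn,\Z)_{(2)}\oplus\bigoplus_{p\mid n}H^i(\dn,\Z)_{(p)}$. Each primary part can be computed by restricting to a Sylow subgroup and taking stable elements.

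\textbf{The $2$-part.} A Sylow $2$-subgroup is $\langle s\rangle\cong C_2$. Since $C_2$ is abelian and equals its own normalizer (the only elements normalizing it lie in $\langle s\rangle$), Swan's theorem says restriction identifies $H^*(\dn,\Z)_{(2)}$ with $H^*(C_2,\Z)=\Z[x]/(2x)$. Alternatively, the quotient map $\dn\to C_2$ splits, so $H^*(C_2)$ is a direct summand of $H^*(\dn)$, and counting orders gives equality of the $2$-parts. Put $a_2=c_1(\chi_s)$. It restricts to $x$, so the $2$-part is $\Z[a_2]/(2a_2)$ in positive even degrees and zero in odd degrees.

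\textbf{The odd part.} For odd $p\mid n$ I would work with the normal subgroup $C_n=\langle r\rangle$, which has index $2$ and order prime to $2$. The Lyndon–Hochschild–Serre spectral sequence collapses after localizing away from $2$, and gives $H^*(\dn,\Z)_{(\text{odd})}=H^*(C_n,\Z)^{C_2}$. The generator $s$ acts on $C_n$ by inversion, so it acts on $H^2(C_n)=\Z/n\cdot x$ by $x\mapsto -x$, and hence on $x^k$ by $(-1)^k$. The invariants are therefore $\Z/n$ in degrees $4j$ and zero (since $n$ is odd) in degrees $4j+2$. To name the generator, let $d_4=c_2(\sigma_1)$. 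It restricts to $c_2(\chi\oplus\chi^{-1})=-x^2$, which generates the invariants in degree $4$. Its powers $d_4^j$ restrict to $\pm x^{2j}$, so they generate in every degree $4j$.

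\textbf{Assembling.} In positive degrees $H^*(\dn)=\Z[a_2]/(2a_2)\oplus\Z[d_4]/(nd_4)$. Because $2$ and $n$ are coprime, the product $a_2d_4$ is killed by both $2$ and $n$, so $a_2d_4=0$. That relation is already forced by $(2a_2,nd_4)$, so the map $\Z[a_2,d_4]/(2a_2,nd_4)\to H^*(\dn)$ is surjective. Comparing orders degree by degree shows it is an isomorphism: in degree $4j$ both sides are $\Z/2\oplus\Z/n$, in degree $4j+2$ both are $\Z/2$, and odd degrees vanish.

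The main obstacle I expect is the odd-part identification, and in particular the sign bookkeeping needed to check that $d_4$ restricts to a generator. The rest is standard transfer and stable-element reasoning.
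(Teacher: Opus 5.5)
The paper gives no proof of this statement. It is quoted from Handel \cite[Theorem 5.2]{handel1993products}, so your argument is a self-contained substitute rather than a variant of anything in the text. Your route is correct, and it is the natural argument for $n$ odd. You split $H^{>0}(\dn,\Z)$ into a $2$-primary part, detected on the self-normalizing Sylow subgroup $\langle s\rangle$, and an odd part, identified with $H^*(\langle r\rangle,\Z)^{C_2}$ by transfer or by the collapsing Lyndon--Hochschild--Serre spectral sequence. Coprimality of $2$ and $n$ then kills every mixed product, so the ring structure reduces to that of two cyclic groups.

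Your argument also yields two things the paper derives separately. First, $(\iota_1^*,\iota_2^*)$ is injective in every positive degree and bijective in positive degrees divisible by $4$. The paper proves this by hand, in degree $4$ only, starting from the quoted presentation (Theorem~\ref{thmisod2n}). Second, you get the normalization $d_4=c_2(\sigma_1)$, which the paper also reaches there. What the citation buys the paper is a single source that also covers even $n$. There the Sylow $2$-subgroup is no longer cyclic and the $2$-primary ring is genuinely harder.

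One step needs tightening. When you set $d_4=c_2(\sigma_1)$, restriction to $\langle r\rangle$ only detects the odd-primary component of $c_2(\sigma_1)$. But you use $nd_4=0$ in two places: to make $\Z[a_2,d_4]/(2a_2,nd_4)\to H^*(\dn,\Z)$ well defined, and for the splitting in your assembling step. That requires the $2$-primary component of $c_2(\sigma_1)$ to vanish too. This is one line. Note that $\sigma_1|_{\langle s\rangle}\cong\mathbb{1}\oplus\epsilon_s$ has $c_2=0$. Restriction to $\langle s\rangle$ is injective on $2$-primary parts, because $\mathrm{cor}\circ\mathrm{res}$ is multiplication by the odd index $n$. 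This is the same check as in Lemma~\ref{lem1}. Alternatively, take $d_4$ to be any generator of the odd part of $H^4(\dn,\Z)$ and identify it with $c_2(\sigma_1)$ afterwards.
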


From the description of the cohomology ring one obtains
$$H^2(D_{2n}, \mb Z)\cong C_2=\langle a_2\rangle \quad \text{and} \quad H^4(D_{2n}, \mb Z)=C_n\oplus C_2=\langle d_4 \rangle \oplus \langle a_2^2\rangle.$$

Consider the cyclic subgroups $R=\langle r  \rangle$ and $\Gamma=\langle s\rangle$ of $D_{2n}$. We write restriction maps at the cohomology level as $\iota_1^*:H^*(D_{2n})\to H^*(R)$ and $\iota_2^*:H^*(D_{2n}) \to H^*(\Gamma)$ where $\iota_1$ and $\iota_2$ denote the inclusions of the subgroups $R$ and $\Gamma$ respectively inside $D_{2n}$. Let $\alpha\in H^2(R,\mathbb{Z})$ denote the generator of  $H^*(R,\mathbb{Z})$ and write $\beta\in H^2(\Gamma,\mathbb{Z})$ for the generator of $H^*(\Gamma,\mathbb{Z})$.

\begin{lemma}\label{c1d2n}
We have 
$$c_1(\sigma_k)=c_1(\chi_s)=a_2.$$
\end{lemma}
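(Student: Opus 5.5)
We use the isomorphism $\mathrm{Hom}(D_{2n},\C^\times)\cong H^2(D_{2n},\Z)$, $\chi\mapsto c_1(\chi)$, recalled in Section~\ref{sec: Notation and prelim}, together with \Cref{thm: cohomology of D_2n for n odd}.

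\textbf{The class $c_1(\chi_s)$.} For $n$ odd the group $\mathrm{Hom}(D_{2n},\C^\times)$ consists of $\mathbb{1}$ and $\chi_s$, so it is cyclic of order $2$ generated by $\chi_s$. By \Cref{thm: cohomology of D_2n for n odd}, $H^2(D_{2n},\Z)\cong C_2=\langle a_2\rangle$. The isomorphism $\chi\mapsto c_1(\chi)$ must carry the unique nontrivial character to the unique nonzero class. Hence $c_1(\chi_s)=a_2$; we take this as the normalization of $a_2$, which is the only possible choice of generator.

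\textbf{The class $c_1(\sigma_k)$.} The first Chern class of any representation equals the first Chern class of its determinant, since $c_1$ is additive under direct sums and $c_1(\det\pi)=c_1(\pi)$. This follows from the splitting principle, or from the fact that $c_1\in H^2(BU(m))$ is pulled back from $BU(1)$ via $\det$. So it suffices to compute $\det\sigma_k$:
\[
\det\sigma_k(r)=\zeta_n^k\zeta_n^{-k}=1,\qquad \det\sigma_k(s)=\det\begin{pmatrix}0&1\\1&0\end{pmatrix}=-1 .
\]
Thus $\det\sigma_k=\chi_s$, and therefore $c_1(\sigma_k)=c_1(\chi_s)=a_2$.

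\textbf{Main point requiring care.} The only step needing justification is $c_1(\pi)=c_1(\det\pi)$ for a representation of arbitrary dimension. This is standard: the inclusion of the maximal torus $T\subset U(m)$ induces an injection on $H^*(B\cdot)$, and under it $c_1$ restricts to $\sum t_i$, which is the pullback of $t$ along $\det:T\to U(1)$. If one prefers to avoid this argument, there is a cross-check using restrictions. Restricting to $\Gamma=\langle s\rangle$, we have $\iota_2^*\sigma_k\cong\mathbb{1}\oplus\sgn$, so $\iota_2^*c_1(\sigma_k)=\beta\neq0$. Since $H^2(D_{2n},\Z)=\langle a_2\rangle$, the class $c_1(\sigma_k)$ is nonzero and hence equal to $a_2$.
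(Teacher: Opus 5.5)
Your proof is correct and follows the paper's argument: you identify $c_1(\sigma_k)$ with $c_1(\det\sigma_k)$, observe that $\det\sigma_k=\chi_s$, and use $H^2(D_{2n},\Z)\cong C_2$ to force $c_1(\chi_s)=a_2$. Your torus-based justification of $c_1(\pi)=c_1(\det\pi)$ and your cross-check via restriction to $\Gamma$ are valid additions; the paper leaves both points implicit.
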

\begin{proof}
For any representation $\pi$ of a group $G$, we know $c_1(\pi)=\det (\pi)$. Note that $\det (\sigma_k)=\chi_s$.    
\end{proof}

Consider the map 
$$\Phi_i:H^{2i}(D_{2n})\to H^{2i}(R)\oplus H^{2i}(\Gamma)\quad \text{defined as} \quad \Phi_i(\nu)=(\iota_1^*(\nu), \iota_2^*(\nu)).$$

\begin{lemma}\label{lem1}
For the representation $\sigma_k$ we have 
$$\Phi_2(c_2(\sigma_k))=(-k^2\alpha^2, 0).$$
\end{lemma}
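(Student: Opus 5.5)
We use naturality of Chern classes under restriction: $\iota_1^*(c_2(\sigma_k))=c_2(\sigma_k|_R)$ and $\iota_2^*(c_2(\sigma_k))=c_2(\sigma_k|_\Gamma)$. So it suffices to decompose the two restrictions into characters of the cyclic groups $R$ and $\Gamma$. Then we apply the Whitney sum formula together with the description $H^*(C_m,\Z)=\Z[x]/\langle mx\rangle$, where $x=c_1$ of the standard character.

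\emph{Restriction to $R$.} By definition, $\sigma_k(r)=\diag(\zeta_n^k,\zeta_n^{-k})$, so $\sigma_k|_R\cong\chi_1^{k}\oplus\chi_1^{-k}$, where $\chi_1(r)=\zeta_n$. Since $c_1$ is additive on tensor products of linear characters (as used in \Cref{sec: klein 4 group}, via \cite[Theorem 4.4.3]{hirzebruch1966topological}), we get $c_1(\chi_1^{\pm k})=\pm k\alpha$. The Whitney formula then gives
$$c(\sigma_k|_R)=(1+k\alpha)(1-k\alpha)=1-k^2\alpha^2,$$
and hence $\iota_1^*(c_2(\sigma_k))=-k^2\alpha^2$.

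\emph{Restriction to $\Gamma$.} The matrix $\sigma_k(s)=\begin{pmatrix}0&1\\1&0\end{pmatrix}$ has eigenvalues $1$ and $-1$, so $\sigma_k|_\Gamma\cong\mathbb{1}\oplus\sgn$. Therefore $c(\sigma_k|_\Gamma)=(1+0)(1+\beta)=1+\beta$, so $c_2(\sigma_k|_\Gamma)=0$. This is consistent with \Cref{c1d2n}, since $\iota_2^*(a_2)=\beta$.

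Combining the two components yields $\Phi_2(c_2(\sigma_k))=(-k^2\alpha^2,0)$. No real obstacle is expected. The only points needing care are the sign convention $c_1(\chi^{-1})=-c_1(\chi)$ and the identification of $\alpha$ with $c_1$ of the character sending $r\mapsto\zeta_n$; both follow from the additivity of $c_1$ under tensor products of characters.
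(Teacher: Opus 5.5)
Your proposal is correct and takes the same approach as the paper: restrict to $R$ and $\Gamma$, split each restriction into characters, and apply the Whitney sum formula. Your $\Gamma$-component is stated more carefully than the paper's, which writes $c(\sigma_k|_\Gamma)=1+a_2$ where $1+\beta$ is meant.
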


\begin{proof}
Taking restriction of $\sigma_k$ to $R$ one obtains 
$\sigma_k\mid_{R}=\chi_k\oplus \chi_k^{-1},$
where $\chi_k=\zeta_n^k$. As a result
\begin{align*}
	c(\sigma_k\mid_{R})=c(\chi_k\oplus \chi_k^{-1})=c(\chi_k)\smile c(\chi_k^{-1})&=(1+c_1(\chi_k))(1+c_1(\chi_k{-1}))\\&=(1+k\alpha)(1-k\alpha)=1-k^2\alpha^2,
\end{align*}
where $\alpha=c_1(\xi_n)$.
In particular, $\iota_1^*(c_2(\sigma_k))=-k^2\alpha^2$.    On the other hand $\sigma_k\mid_{\Gamma}=\mathbb{1}\oplus \chi_s.$ This gives $$c(\sigma_k\mid_{\Gamma})=1+c_1(\chi_s)=1+a_2.$$
In particular, $\iota_2^*(c_2(\sigma_k))=0$.
\end{proof}

\begin{theorem}\label{thmisod2n}
	For $n$ odd, the map $\Phi_2:H^4(D_{2n})\to H^4(R)\oplus H^4(\Gamma)$ given by $\Phi_2(\nu)=(\iota_1^*(\nu), \iota_2^*(\nu))$ is an isomorphism.
\end{theorem}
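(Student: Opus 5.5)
By \Cref{thm: cohomology of D_2n for n odd}, $H^4(D_{2n},\Z)\cong C_n\oplus C_2$, generated by $d_4$ (order $n$) and $a_2^2$ (order $2$). The target is $H^4(R)\oplus H^4(\Gamma)\cong C_n\oplus C_2$, generated by $\alpha^2$ and $\beta^2$. Both groups have order $2n$, so it suffices to show that $\Phi_2$ is surjective, or equivalently injective. The plan is to compute $\Phi_2$ on explicit classes whose images generate the target.

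First I handle the $C_2$-summand using $a_2^2$. By \Cref{c1d2n}, $a_2=c_1(\chi_s)$. The restriction $\chi_s|_R$ is trivial, so $\iota_1^*(a_2)=0$. The restriction $\chi_s|_\Gamma$ is the sign character of $\Gamma$, so $\iota_2^*(a_2)=\beta$. Restriction is a ring map, hence
\[
\Phi_2(a_2^2)=(0,\beta^2),
\]
and $\beta^2$ generates $H^4(\Gamma)\cong C_2$.

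Next I handle the $C_n$-summand using the class $c_2(\sigma_1)$. \Cref{lem1} gives $\Phi_2(c_2(\sigma_1))=(-\alpha^2,0)$, and $-\alpha^2$ generates $H^4(R)\cong C_n$.

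Together these show that $\Phi_2$ is surjective. A surjection between finite groups of the same order $2n$ is a bijection, so $\Phi_2$ is an isomorphism. This argument never needs to identify $d_4$ explicitly.

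I expect no real obstacle; the argument rests on two points.
\begin{itemize}
\item The orders of the source and target must agree. This is immediate from \Cref{thm: cohomology of D_2n for n odd} and the cyclic-group computation in \Cref{sec: Notation and prelim}.
\item Chern classes must be natural under restriction, so that $\iota^*c_i(\pi)=c_i(\pi|_H)$. This is standard, being functoriality of the map $B\rho$.
\end{itemize}
As a sanity check on consistency, $\iota_2^*(c_2(\sigma_k))=0$ agrees with the fact that $c_2(\sigma_k)$ must lie in the $C_n$-part up to a multiple of $a_2^2$. The counting argument makes that identification unnecessary.
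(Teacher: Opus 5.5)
Your proof is correct and is essentially the paper's argument. Both proofs rest on $\Phi_2(c_2(\sigma_1))=(-\alpha^2,0)$ from Lemma~\ref{lem1}, on $\iota_1^*(a_2)=0$ and $\iota_2^*(a_2)=\beta$, and on the equality of orders; the paper simply packages your two images into the single class $c_2(\sigma_1)+a_2^2$, which maps to the generator $(-\alpha^2,\beta^2)$ of the cyclic target. The paper also first shows that $c_2(\sigma_1)$ has no $a_2^2$-component and normalizes $d_4:=c_2(\sigma_1)$, which, as you note, is not needed for the isomorphism itself. That normalization is used in the next theorem, and it follows from your result anyway: $c_2(\sigma_1)$ then has order $n$, so it generates the $n$-torsion subgroup $\langle d_4\rangle$.
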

\begin{proof}
Let $\epsilon_s$ denote the nontrivial irreducible representation of $\Gamma$. We compute
$$\iota_2^*(a_2)=\iota_2^*(c_1(\chi_s))=c_1(\epsilon_s)=\beta.$$
Suppose $c_2(\sigma_1)=md_4+\delta a_2^2$, where $m\in \Z/n\Z$ and $\delta\in \Z/2\Z$. Lemma \ref{lem1} asserts $\iota_2^*(c_2(\sigma_1))=0$. Then
\begin{align*}
\iota_2^*(md_4)=\delta\iota_2^*(a_2^2)&=\delta\beta^2.
\end{align*}
Since $\beta^2$ is an order $2$ element, we conclude that the order of $md_4$ is even. But $md_4\in C_n$ where $n$ is odd, which is a contradiction. So $\delta=0$, i.e. $c_2(\sigma_1)=md_4$.
From \Cref{lem1} we have
$$\iota_1^*(md_4)=\iota_1^*(c_2(\sigma_1))=-\alpha^2.$$
Since $-\alpha^2$ has order $n$, the order of $md_4$ is a multiple of $n$. Also $nd_4=0$. This shows that order of $md_4$ is equal to $n$. So $\gcd(m,n)=1$. Without loss of generality we can choose $m=1$. Therefore we have $c_2(\sigma_1)=d_4$ and 
\begin{equation}
 \iota_1^*(d_4)=\iota_1^*(c_2(\sigma_1))=-\alpha^2.   
\end{equation}
 
Since $n$ is odd, we have
$$H^4(D_{2n}, \mb Z)=\langle d_4 \rangle \oplus \langle a_2^2\rangle=C_n\oplus C_2\cong C_{2n}.$$
In particular, the element $d_4+a_2^2$ becomes a generator of $C_{2n}$. Evaluating $\Phi_2$ on $d_4+a_2^2$, we obtain
\begin{align*}
\Phi_2(d_4+a_2^2)&=(\iota_1^*(d_4+a_2^2), \iota_2^*(d_4+a_2^2))=(-\alpha^2,\beta^2).
\end{align*} 
Observe that $H^4(R)\oplus H^4(\Gamma)$ is also a cyclic group of order $2n$ with a generator $(-\alpha^2,\beta^2)$. Hence $\Phi_2$ is an isomorphism.
\end{proof}

\begin{theorem}
	For $n$ odd, the total Chern classes of non-trivial representations of $D_{2n}$ are given by
	\begin{enumerate}
\item 
$c(\chi_s)=1+a_2$.
\item
$c(\sigma_k)=1+a_2+k^2 d_4$.
\end{enumerate}
\end{theorem}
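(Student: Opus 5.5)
The plan is to compute each total Chern class degree by degree, using that $\chi_s$ is one-dimensional and $\sigma_k$ is two-dimensional, so that only $c_1$ and $c_2$ can be non-zero. For $\chi_s$ there is nothing beyond $c_1$, and $c_1(\chi_s)=a_2$ by \Cref{c1d2n}, which gives $c(\chi_s)=1+a_2$. For $\sigma_k$, \Cref{c1d2n} again gives $c_1(\sigma_k)=a_2$. It therefore remains to show $c_2(\sigma_k)=k^2 d_4$ in $H^4(D_{2n},\Z)$.

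For the second Chern class I will use the injectivity of $\Phi_2$ from \Cref{thmisod2n}. It suffices to check that $\Phi_2(c_2(\sigma_k))=\Phi_2(k^2 d_4)$.
\begin{itemize}
\item By \Cref{lem1}, the left side is $(-k^2\alpha^2,0)$.
\item For the right side, the proof of \Cref{thmisod2n} established $c_2(\sigma_1)=d_4$ and $\iota_1^*(d_4)=-\alpha^2$. Hence $\iota_1^*(k^2d_4)=-k^2\alpha^2$.
\item Also $\iota_2^*(d_4)=\iota_2^*(c_2(\sigma_1))=0$, by \Cref{lem1} applied with $k=1$. Hence $\iota_2^*(k^2 d_4)=0$.
\end{itemize}
The two images agree, and injectivity of $\Phi_2$ gives $c_2(\sigma_k)=k^2d_4$. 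Therefore $c(\sigma_k)=1+a_2+k^2d_4$.

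The only delicate point is the normalization of $d_4$. The generator $d_4$ of \Cref{thm: cohomology of D_2n for n odd} is only determined up to a unit in $\Z/n$. The statement implicitly uses the choice $d_4:=c_2(\sigma_1)$ made in the proof of \Cref{thmisod2n}, where we showed $c_2(\sigma_1)$ has no $a_2^2$ component and generates the $C_n$ summand. I will state explicitly that this convention is in force. Once it is fixed, the argument is purely formal: restrict to $R$ and $\Gamma$ and invoke the isomorphism $\Phi_2$. No separate computation of the $a_2^2$-component of $c_2(\sigma_k)$ is needed, because the vanishing of $\iota_2^*(c_2(\sigma_k))$ already forces that component to be $0$.
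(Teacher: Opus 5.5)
Your proof is correct and is essentially the paper's argument: both rest on \Cref{lem1} together with the injectivity of $\Phi_2$ from \Cref{thmisod2n}, under the normalization $d_4=c_2(\sigma_1)$. The only difference is bookkeeping. The paper writes $\Phi_2(c_2(\sigma_k))$ as the multiple $\ell=(n+1)k^2$ of the generator $\Phi_2(d_4+a_2^2)=(-\alpha^2,\beta^2)$ via the Chinese remainder theorem and then simplifies, whereas you compare directly with $\Phi_2(k^2 d_4)$, which is slightly cleaner.
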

\begin{proof}
\begin{enumerate}
    \item The first claim follows directly from  \Cref{c1d2n}.
    
    \item From \Cref{lem1} we have $\Phi_2(c_2(\sigma_k))=(-k^2\alpha^2,0)$. Since $(-\alpha^2,\beta^2)$ is a generator of $H^4(R, \Z)\oplus H^4(\Gamma, \Z)$, we have 
$(-k^2\alpha^2,0)=\ell(-\alpha^2,\beta^2)$. This shows $\ell\equiv 0\pmod 2$ and $\ell\equiv k^2\pmod {n}$. By the Chinese remainder theorem we obtain $\ell=(n+1)k^2$.   Since $\Phi_2$ is an isomorphism and $\Phi_2(d_4+a_2^2)=(-\alpha^2,\beta^2)$ (see \Cref{thmisod2n}) we have $c_2(\sigma_k)=(n+1)k^2(d_4+a_2^2)$. As $nd_4=0$ and $n$ is odd, we have $c_2(\sigma_k)=k^2d_4$. 
\end{enumerate}
\end{proof}
\begin{corollary} If $n$ is an odd prime, then the total Chern class distinguishes every $2$-dimensional representation of $D_{2n}.$
\end{corollary}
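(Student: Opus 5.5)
We use the theorem just proved: for $n$ odd, $c(\sigma_k)=1+a_2+k^2d_4$ for $1\le k\le (n-1)/2$. The $2$-dimensional representations of $D_{2n}$ are the irreducible $\sigma_k$ together with the reducible ones, namely $\mathbb{1}\oplus\mathbb{1}$, $\mathbb{1}\oplus\chi_s$ and $\chi_s\oplus\chi_s$. First we list the total Chern classes of the reducible ones by the Whitney sum formula and \Cref{c1d2n}:
\[
c(\mathbb{1}\oplus\mathbb{1})=1,\qquad c(\mathbb{1}\oplus\chi_s)=1+a_2,\qquad c(\chi_s\oplus\chi_s)=(1+a_2)^2=1+a_2^2,
\]
where we use $2a_2=0$.

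Next we separate these classes. By \Cref{thm: cohomology of D_2n for n odd}, $H^2(D_{2n},\Z)=\langle a_2\rangle\cong C_2$ and $H^4(D_{2n},\Z)=\langle d_4\rangle\oplus\langle a_2^2\rangle\cong C_n\oplus C_2$, with $a_2\neq 0$ and $a_2^2\neq 0$. The class $c_1$ already splits the list into two groups. The first group has $c_1=0$ and consists of $\mathbb{1}\oplus\mathbb{1}$ (with $c_2=0$) and $\chi_s\oplus\chi_s$ (with $c_2=a_2^2\neq 0$). The second group has $c_1=a_2$ and consists of $\mathbb{1}\oplus\chi_s$ (with $c_2=0$) and the $\sigma_k$ (with $c_2=k^2d_4$). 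So it remains to show two things: $k^2d_4\neq 0$ for each admissible $k$, and the classes $k^2d_4$ are pairwise distinct for $1\le k\le (n-1)/2$.

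This last step is where the hypothesis that $n$ is prime enters, and it is the only real content. Since $d_4$ generates a summand $C_n$, we have $k^2d_4=l^2d_4$ if and only if $k^2\equiv l^2\pmod n$. For $n=p$ prime this means $p\mid (k-l)(k+l)$, so $p\mid k-l$ or $p\mid k+l$. For $1\le k,l\le (p-1)/2$ we have $0<k+l<p$ and $|k-l|<p$, so this forces $k=l$. Likewise $p\nmid k$, so $k^2d_4\neq 0$. Hence all $2$-dimensional representations have pairwise distinct total Chern classes.

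For composite $n$ the argument breaks down, since distinct square roots can occur: for $n=15$, $k=1$ and $k=4$ give $k^2\equiv 1$. This explains the primality hypothesis. If one instead reads the corollary as being only about the irreducible $\sigma_k$, the final step alone suffices.
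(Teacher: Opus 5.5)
Your proof is correct. For the irreducible representations it is the paper's argument. The paper deduces $i^2\equiv j^2 \pmod n$ from $c(\sigma_i)=c(\sigma_j)$ and then cites the fact that there are exactly $(n-1)/2$ nonzero quadratic residues modulo a prime. That is a repackaging of your observation that $p\mid (k-l)(k+l)$ with $0<k+l<p$ forces $k=l$. Your version also makes explicit that $d_4$ has order exactly $p$, which the paper uses without saying so.

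Where you go further is in reading ``every $2$-dimensional representation'' literally. The paper's proof only compares $\sigma_i$ with $\sigma_j$. You also separate $\mathbb{1}\oplus\mathbb{1}$, $\mathbb{1}\oplus\chi_s$ and $\chi_s\oplus\chi_s$ from each other and from the $\sigma_k$, using $c_1$ together with $a_2^2\neq 0$ and $k^2d_4\neq 0$. This extra step is routine and gives the stronger statement.
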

\begin{proof}
    Suppose $c(\sigma_i)=c(\sigma_j),$ where $1\leq i \neq j \leq \frac{n-1}{2}.$ Then we have $i^2 \equiv j^2 (\text{~mod~} n).$ But when $n$ is an odd prime, there are exactly $\frac{n-1}{2}$ distinct quadratic residues modulo $n$. This forces $i=j.$ 
\end{proof}
However, for n composite, two representations might end up having the same total Chern class, which will be predominantly the case in the next section.

\vspace{1ex}
\subsection{The Group $D_{2n}$ with $n$ is even}~\vspace{1ex}

\begin{thm}\cite[Theorem 5.3]{handel1993products}\label{thm : cohomology of D_2n for n even}
    Let $n\geq 2$ be even. Then 
    $$H^{*}(\dn,\Z)=\frac{\Z [a,b,c,d]}{\left(2a,2b,2c,nd,b^2+ab+\frac{n^2d}{4}, c^2+ad\right)},$$
where $a$ and $b$ have degree $2$, $c$ has degree $3$ and $d$ has degree $4.$
\end{thm}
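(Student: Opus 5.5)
This statement is cited from \cite[Theorem 5.3]{handel1993products}, and the paper uses it as a black box. A proof sketch would go along the following lines.

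\textbf{Spectral sequence.} I would use the Lyndon--Hochschild--Serre spectral sequence of the extension $1\to R\to D_{2n}\to \Gamma\to 1$, where $R=\langle r\rangle\cong C_n$ and $\Gamma\cong C_2$. Here $H^*(R,\Z)=\Z[\alpha]/(n\alpha)$, and $s$ acts by $\alpha\mapsto-\alpha$. Hence the $\Gamma$-module $H^{2j}(R,\Z)$ is $\Z/n$ with the action $(-1)^j$.

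\textbf{The $E_2$ page.} The terms $H^p(C_2;\Z/n)$ are computed with the twisted or untwisted action. Since $n$ is even, each such group is $\Z/2$ in positive degrees, apart from the invariants in column $0$. These are $\Z/n$ in even rows $2j$ with $j$ even, and $\Z/2$ in rows with $j$ odd. The $E_2$ page therefore contains many $\Z/2$'s.

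\textbf{Collapse.} I would show the spectral sequence collapses by comparing with the subgroups $\langle r^2,s\rangle$, $\langle r^2, rs\rangle$ (dihedral or Klein four) and $\langle r\rangle$. Alternatively, one can use the splitting $D_{2n}=R\rtimes\Gamma$, which forces the bottom row to survive, together with the fact that $H^*(D_{2n})$ is detected on elementary abelian subgroups and cyclic subgroups.

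\textbf{Generators.} The classes are as follows.
\begin{itemize}
\item $a=c_1(\chi_r)$ and $b=c_1(\chi_s)$, or the appropriate combination.
\item $d=c_2(\sigma_1)$, restricting to a generator of $H^4(R)$ up to sign.
\item $c$, the degree-$3$ class from $H^1(\Gamma;H^2(R))$ with twisted coefficients; this is the Bockstein of a mod $2$ class.
\end{itemize}

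\textbf{Relations.}
\begin{itemize}
\item $2a=2b=2c=nd=0$ follow from orders.
\item $b^2+ab+\tfrac{n^2}{4}d=0$ is checked by restriction to $R$, $\langle s\rangle$ and $\langle rs\rangle$ and the two Klein subgroups, which jointly detect cohomology.
\item $c^2=ad$ is checked similarly.
\end{itemize}

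Matching orders with the $E_\infty$ page in each degree then completes the argument.

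\textbf{Main obstacle.} The hard part is the multiplicative extension problems, especially the relation $b^2+ab+\frac{n^2}{4}d$. Its form depends on $n \bmod 4$ and requires a careful detection argument.
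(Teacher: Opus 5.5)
The paper gives no argument for this statement. It is imported from Handel, so your outline has to stand on its own.

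\textbf{Collapse.} Your $E_2$ page is correct, but the collapse step, which is where the content lies, is not actually argued. ``Comparing with subgroups'' is not specified. Detection on elementary abelian and cyclic subgroups cannot give collapse either: it is an injectivity statement, and by itself gives no lower bound on $|H^k(D_{2n},\Z)|$. Collapse means $|H^k|$ equals the order of the total degree-$k$ part of $E_2$, and that needs enough permanent cycles. (Integral detection on that family is also not an off-the-shelf fact here; it is usually deduced from the computation rather than used as an input.)

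The missing argument uses ingredients you already list:
\begin{itemize}
\item $E_2$ is generated multiplicatively by the bottom-row class $t\in E_2^{2,0}$ together with $E_2^{0,2}$, $E_2^{0,4}$ and $E_2^{1,2}$. Cup with $t$ is the periodicity map $H^p(C_2;M)\to H^{p+2}(C_2;M)$, bijective for $p\ge 1$ and onto for $p=0$. One also checks the coefficient pairings, e.g.\ $E_2^{1,2}\otimes E_2^{0,2}\to E_2^{1,4}$ is nonzero.
\item The splitting kills all differentials into row $0$, so $t$, $E_2^{0,2}$ and $E_2^{1,2}$ are permanent cycles.
\item The only possible obstruction is $d_3\colon E_3^{0,4}\to E_3^{3,2}\cong\Z/2$. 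It vanishes because $c_2(\sigma_1)$ restricts to $-\alpha^2$, a generator of $H^4(R)$.
\end{itemize}
With collapse in hand, generation by $a,b,c,d$ follows from the filtration. Comparing orders then finishes the proof with no additive extension problem: the monomials $a^ib^{\epsilon}d^l$ and $a^ib^{\epsilon}cd^l$ with $\epsilon\in\{0,1\}$ span, and their orders multiply to $|E_2|$ in each degree.

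\textbf{Generators and relations.} Your choice of generators is incompatible with the relations, and the roles of $a$ and $b$ are in fact forced.
\begin{itemize}
\item Restricting $c^2=ad$ to $R$ gives $0=a|_R\cdot(\pm\alpha^2)$. This rules out $a=c_1(\chi_r)$, whose restriction is $\tfrac n2\alpha$, since $\tfrac n2\alpha^3\neq 0$. Hence $a=c_1(\chi_s)=t$.
\item With $b=c_1(\chi_s)$, the class $b(a+b)=c_1(\chi_s)c_1(\chi_{rs})$ restricts to $\alpha_s^2\neq 0$ on $\langle s\rangle$, while $\tfrac{n^2}{4}c_2(\sigma_1)$ restricts to $0$. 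Hence $b\in\{c_1(\chi_r),c_1(\chi_{rs})\}$.
\end{itemize}

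You have also located the difficulty in the wrong relation. Once $|H^4|=4n$ is known from $E_\infty$, restriction to $R,\langle s\rangle,\langle rs\rangle$ is injective on $H^4$. The degree-$4$ relation is then a one-line check, and its dependence on $n \bmod 4$ is only whether $\tfrac{n^2}{4}\equiv 0$ or $\tfrac n2 \pmod n$.

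The delicate relation is $c^2=ad$, because both sides vanish on every cyclic subgroup. For it you need three things:
\begin{itemize}
\item $H^6$ is elementary abelian, since every degree-$6$ monomial involves $a$, $b$ or $c$; so mod $2$ reduction is injective on it.
\item Mod $2$ detection on $\langle r^{n/2},s\rangle$ and $\langle r^{n/2},rs\rangle$.
\item An explicit model of $c$. The integral Bockstein $c=\tilde\beta(w_2)$ of $w_2$ of the real form of $\sigma_1$ works. Its reduction is $w_1(\chi_s)w_2$ by Wu's formula, and it restricts to the nonzero class $\mu\in H^3(V_4,\Z)$, which satisfies $\mu^2=xy(x+y)$.
\end{itemize}
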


From the description of the cohomology ring we obtain
$$H^4(D_{2n}, \Z)=\langle a\rangle\oplus \langle b\rangle \cong C_2\oplus C_2,\quad H^4(D_{2n}, \Z)=\langle d \rangle \oplus\langle a^2 \rangle\oplus \langle b^2 \rangle =C_n\oplus C_2\oplus C_2.$$
 
Consider the subgroups $R:=\langle r\rangle, \Gamma:= \langle s \rangle, \Omega:=\langle rs  \rangle$
of $D_{2n}$. Let $\iota_1, \iota_2, \iota_3$ denote the embeddings of these subgroups in $D_{2n}$. Let $\alpha_r,\alpha_s,\alpha_{rs}$ denote the generators of the groups $H^2(R,\mathbb{Z}), H^2(\Gamma,\mathbb{Z}), H^2(\Omega,\mathbb{Z})$. Consider the ring homomorphism $$\Theta :  H^*(D_{2n},\Z) \to H^*(R,\Z) \oplus H^*(\Gamma,\Z) \oplus H^*(\Omega,\Z)$$  given by $\Theta_i(\nu)=(\iota_1^*(\nu), \iota_2^*(\nu), \iota_3^*(\nu))$, where the $i$-th component of $\Theta$ is  
$$\Theta_i: H^{2i}(D_{2n}, \Z)\to H^{2i}(R,\Z)\oplus H^{2i}(\Gamma, \Z)\oplus H^{2i}(\Omega), i\in \N$$

\begin{lemma}\label{lemma dihedral even}
We have the following equalities:
\begin{enumerate}
\item 
$\Theta_1(c_1(\chi_r))=(\frac{n}{2}\alpha_r, 0, \alpha_{rs})$.
\item 
$\Theta_1(c_1(\chi_s))=(0, \alpha_{s}, \alpha_{rs})$.
\item 
$\Theta_1(c_1(\chi_{rs}))=(\frac{n}{2}\alpha_r, \alpha_{s}, 0)$.
\item 
$\Theta_1(c_1(\sigma_k))=(0, \alpha_{s}, \alpha_{rs})$.
\item 
$\Theta_2(c_2(\sigma_k))=(-k^2\alpha_r^2, 0, 0)$.

\end{enumerate}
\end{lemma}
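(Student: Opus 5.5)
The plan is to use naturality of Chern classes under restriction: for a subgroup $H\le D_{2n}$ with inclusion $\iota$, we have $\iota^*c_i(\pi)=c_i(\pi|_H)$. Each component of $\Theta_i(c_i(\pi))$ is then the Chern class of a restricted representation on a cyclic group, which is computed from the description $H^*(C_m,\Z)=\Z[x]/\langle mx\rangle$ with $x=c_1$ of the standard faithful character, and from the Whitney sum formula. Throughout, $\alpha_r=c_1(r\mapsto\zeta_n)$, and $\alpha_s,\alpha_{rs}$ are $c_1$ of the sign characters of $\Gamma\cong C_2$ and $\Omega\cong C_2$ (note $(rs)^2=1$ from the relation $rsrs=1$).

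For the linear characters (1)--(3), I will compute the restrictions directly.
\begin{itemize}
\item $\chi_r|_R$ sends $r\mapsto -1=\zeta_n^{n/2}$, so it is the $(n/2)$-th power of the standard character, and hence has $c_1=\frac n2\alpha_r$.
\item $\chi_r|_\Gamma$ is trivial.
\item $\chi_r(rs)=-1$, so $\chi_r|_\Omega$ is the sign character, giving $\alpha_{rs}$.
\end{itemize}
The same bookkeeping gives the other two. For $\chi_s$: $\chi_s(r)=1$, $\chi_s(s)=-1$, $\chi_s(rs)=-1$. For $\chi_{rs}$: $\chi_{rs}(r)=-1$, $\chi_{rs}(s)=-1$, $\chi_{rs}(rs)=1$. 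Here I use that $c_1$ is a homomorphism $\Hom(G,\C^\times)\to H^2(G,\Z)$, so that $c_1(\chi^j)=jc_1(\chi)$.

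For (4), I will argue as in \Cref{c1d2n}: $c_1(\sigma_k)=c_1(\det\sigma_k)$. The determinant satisfies $\det\sigma_k(r)=1$ and $\det\sigma_k(s)=-1$, so $\det\sigma_k=\chi_s$, and (4) follows from (2).

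For (5), I will restrict to each subgroup in turn.
\begin{itemize}
\item On $R$: $\sigma_k|_R=\chi_k\oplus\chi_k^{-1}$, so the Whitney formula gives $(1+k\alpha_r)(1-k\alpha_r)=1-k^2\alpha_r^2$, exactly as in \Cref{lem1}.
\item On $\Gamma$: $\sigma_k(s)$ is the swap matrix, with eigenvalues $1$ and $-1$, so $\sigma_k|_\Gamma\cong\mathbb 1\oplus\epsilon$. Its total Chern class is $1+\alpha_s$, and therefore $c_2=0$.
\item On $\Omega$: $\sigma_k(rs)$ is the antidiagonal matrix with entries $\zeta_n^{k}$ and $\zeta_n^{-k}$. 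This matrix squares to the identity and has trace $0$, so its eigenvalues are $\pm1$. Hence $\sigma_k|_\Omega\cong\mathbb 1\oplus\epsilon$ again, and $c_2=0$.
\end{itemize}

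The only mildly delicate point is this last eigenvalue computation, together with keeping the orientation convention for $\alpha_r$ consistent, since it affects the sign and the multiple $n/2$. Beyond that, everything is a direct restriction computation.
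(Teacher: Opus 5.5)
Your proposal is correct and follows the same route as the paper: you restrict each representation to the cyclic subgroups $R,\Gamma,\Omega$ and use naturality, the determinant for $c_1$, and the Whitney sum formula for $c_2$. The paper writes out only (1)--(3) and leaves (4)--(5) implicit, by analogy with \Cref{c1d2n} and \Cref{lem1}, so your explicit check that $\sigma_k(rs)$ has eigenvalues $\pm1$ supplies the $\Omega$-component that the paper omits.
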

\begin{proof}

The equalities in $(1), (2), (3)$ can be easily obtained by restricting the corresponding characters to the subgroups $R, \Gamma, \Omega$ respectively. So we prove the remaining ones.
\end{proof}

\begin{lemma}\label{theta1inj}
The map 
$$\Theta_1: H^2(D_{2n}, \mb Z)\to H^2(R, \mb Z)\oplus H^2(\Gamma, \mb Z)\oplus H^2(\Omega, \mb Z),$$
given by $\Theta_1(\nu)=(\iota_1^*(\nu), \iota_2^*(\nu), \iota_3^*(\nu))$  is injective.
\end{lemma}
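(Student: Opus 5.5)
The plan is to use the isomorphism $\mathrm{Hom}(D_{2n},\C^\times)\cong H^2(D_{2n},\Z)$, $\chi\mapsto c_1(\chi)$, recalled in Section \ref{sec: Notation and prelim}. For $n$ even the group $H^2(D_{2n},\Z)$ is $C_2\oplus C_2$, so it consists exactly of
$$c_1(\mathbb{1})=0,\quad c_1(\chi_r),\quad c_1(\chi_s),\quad c_1(\chi_{rs}).$$
This description is also consistent with $\langle a\rangle\oplus\langle b\rangle$ from \Cref{thm : cohomology of D_2n for n even}. Since $\Theta_1$ is a group homomorphism, injectivity amounts to showing that its kernel is trivial. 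So it suffices to check that the three nonzero classes $c_1(\chi_r)$, $c_1(\chi_s)$, $c_1(\chi_{rs})$ have nonzero image under $\Theta_1$.

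\Cref{lemma dihedral even} gives these images directly:
$$\Theta_1(c_1(\chi_r))=\left(\tfrac{n}{2}\alpha_r,0,\alpha_{rs}\right),\quad \Theta_1(c_1(\chi_s))=(0,\alpha_s,\alpha_{rs}),\quad \Theta_1(c_1(\chi_{rs}))=\left(\tfrac{n}{2}\alpha_r,\alpha_s,0\right).$$
In each of the three images at least one of the components $\alpha_s$ or $\alpha_{rs}$ appears. These are generators of $H^2(\Gamma,\Z)\cong C_2$ and $H^2(\Omega,\Z)\cong C_2$, hence nonzero. The kernel of $\Theta_1$ is therefore trivial and $\Theta_1$ is injective. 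In fact, the $\Gamma$- and $\Omega$-components alone already separate the four classes, giving $(0,0)$, $(0,\alpha_{rs})$, $(\alpha_s,\alpha_{rs})$ and $(\alpha_s,0)$ respectively.

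The only step requiring care is the input from \Cref{lemma dihedral even}, namely that the restrictions are computed correctly. For instance, $\chi_r$ restricted to $R$ is the character $r\mapsto -1=\zeta_n^{n/2}$, whose first Chern class is $\tfrac{n}{2}\alpha_r$. Restricted to $\Omega=\langle rs\rangle$ it is the sign character, since $\chi_r(rs)=-1$, so its first Chern class is $\alpha_{rs}$. These computations follow from naturality of $c_1$ under restriction, together with the description of $H^*(C_m,\Z)$ via $c_1$ of the standard character. The rest of the argument is a direct check.
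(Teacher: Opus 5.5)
Your proposal is correct and follows essentially the same route as the paper. Both identify $H^2(D_{2n},\Z)$ with the four linear characters via $c_1$, read off the three nonzero images from \Cref{lemma dihedral even}, and conclude injectivity (the paper phrases this as ``all images are distinct''; you phrase it as a trivial kernel). Your remark that the $\Gamma$- and $\Omega$-components alone already separate the four classes is a harmless strengthening.
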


\begin{proof}
We compute the images of all nontrivial elements of $H^2(D_{2n}, \mb Z)$ under the map $\Theta_1$.
\begin{align*}
&\Theta_1(c_1(\chi_r))=\left(\frac{n}{2}\alpha_r, 0, \alpha_{rs}\right).\\
&\Theta_1(c_1(\chi_s))=\left(0, \alpha_{s}, \alpha_{rs}\right).\\
&\Theta_1(c_1(\chi_{rs}))=\left(\frac{n}{2}\alpha_r,\alpha_{s}, 0 \right).\\
\end{align*}
The map $\Theta_1$ is injective since all images are distinct.
\end{proof}

\begin{lemma}\label{theta2iso}
  Consider the map $$\Theta_2: H^4(D_{2n}, \Z)\to H^4(R, \Z)\oplus H^4(\Gamma,Z)\oplus H^4(\Omega,\Z), \quad \text{given by} \quad \Theta_2(\nu)=(\iota_1^*(\nu), \iota_2^*(\nu),\iota_3^*(\nu))$$
  Then $\Theta_2$ is an isomorphism.
\end{lemma}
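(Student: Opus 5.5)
The plan is to prove surjectivity of $\Theta_2$ and then conclude by counting. From \Cref{thm : cohomology of D_2n for n even}, the degree-$4$ part $H^4(D_{2n},\Z)$ is spanned by $d, a^2, b^2, ab$. The relation $b^2+ab+\frac{n^2}{4}d=0$ lets us eliminate $ab$, so $H^4(D_{2n},\Z)=\langle d\rangle\oplus\langle a^2\rangle\oplus\langle b^2\rangle\cong C_n\oplus C_2\oplus C_2$. This has order $4n$, as recorded after that theorem. The target $H^4(R,\Z)\oplus H^4(\Gamma,\Z)\oplus H^4(\Omega,\Z)\cong C_n\oplus C_2\oplus C_2$ also has order $4n$. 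Hence a surjective $\Theta_2$ is automatically an isomorphism.

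For surjectivity I will exhibit three classes in $H^4(D_{2n},\Z)$ whose images generate the target. The key tool is that $\Theta$ is a ring homomorphism, so $\Theta_2(u^2)$ is the componentwise square of $\Theta_1(u)$. By \Cref{lemma dihedral even}(1),(2),
\[
\Theta_2(c_1(\chi_r)^2)=\left(\tfrac{n^2}{4}\alpha_r^2,\,0,\,\alpha_{rs}^2\right),\qquad \Theta_2(c_1(\chi_s)^2)=\left(0,\,\alpha_s^2,\,\alpha_{rs}^2\right).
\]
By \Cref{lemma dihedral even}(5) with $k=1$, we also have $\Theta_2(c_2(\sigma_1))=(-\alpha_r^2,0,0)$. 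The first coordinate $-\alpha_r^2$ generates $H^4(R,\Z)\cong C_n$.

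Subtracting $\frac{n^2}{4}$ times $-\Theta_2(c_2(\sigma_1))$ from $\Theta_2(c_1(\chi_r)^2)$ yields $(0,0,\alpha_{rs}^2)$. Subtracting that from $\Theta_2(c_1(\chi_s)^2)$ yields $(0,\alpha_s^2,0)$. Together with $(-\alpha_r^2,0,0)$, these three elements generate all three cyclic summands. Therefore $\Theta_2$ is onto, and by the order count it is an isomorphism.

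The main obstacle is justifying \Cref{lemma dihedral even}(5), whose proof the paper leaves open. On $R$ we have $\sigma_k|_R=\chi_k\oplus\chi_k^{-1}$, giving $c(\sigma_k|_R)=(1+k\alpha_r)(1-k\alpha_r)$, exactly as in \Cref{lem1}, so the first coordinate is $-k^2\alpha_r^2$. On $\Gamma$ and on $\Omega$, the generators $s$ and $rs$ act by reflections with eigenvalues $\pm1$. Thus each restriction is $\mathbb{1}\oplus(\text{sign})$, whose total Chern class is $1+(\text{generator})$, and so $c_2$ vanishes there. Once (5) is in place, the rest of the argument is bookkeeping.
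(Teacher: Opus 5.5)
Your proposal is correct and follows the paper's proof. Both arguments exhibit $c_2(\sigma_1)$ together with squares of first Chern classes of linear characters, check that their images generate the three cyclic summands, and conclude by comparing orders, which are $4n$ on both sides. The differences are minor. You use $c_1(\chi_r)^2$ where the paper uses $c_1(\chi_{rs})^2$; the paper's displayed image of the latter has a misprint, with $\alpha_{rs}^2$ in the middle slot instead of $\alpha_s^2$. You also spell out the elimination steps and the restriction computation behind $\Theta_2(c_2(\sigma_k))=(-k^2\alpha_r^2,0,0)$, which the paper asserts without proof.
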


\begin{proof}
We have

\begin{align*}
&\Theta_2(c_2(\sigma_1))=\left(-\alpha_r^2, 0, 0\right)\\
&\Theta_2(c_1(\chi_s)^2)=\left(0, \alpha_{s}^2, \alpha_{rs}^2\right)\\
&\Theta_2(c_1(\chi_{rs})^2)=\left(\left(\frac{n}{2}\alpha_r\right)^2,\alpha_{rs}^2, 0 \right)      \\
\end{align*}
This shows that the map $\Theta_2$ is surjective. Since $H^4(D_{2n},\Z)$ and $H^4(R,\Z)\oplus H^4(\Gamma,\Z)\oplus H^4(\Omega,\Z)$ have the same cardinality, it is an isomorphism.
\end{proof}

\begin{prop}
The elements $c_2(\sigma_1), a^2, b^2$ generate the cohomology group $H^4(D_{2n},\Z)$.
\end{prop}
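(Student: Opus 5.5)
The plan is to use the isomorphism $\Theta_2$ of \Cref{theta2iso}: the elements $c_2(\sigma_1), a^2, b^2$ generate $H^4(D_{2n},\Z)$ exactly when their images under $\Theta_2$ generate the target
$$H^4(R,\Z)\oplus H^4(\Gamma,\Z)\oplus H^4(\Omega,\Z)\cong C_n\oplus C_2\oplus C_2.$$
So everything reduces to computing three images and checking that they span this target.

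First I identify $a$ and $b$ in terms of first Chern classes of linear characters. By the isomorphism $\Hom(D_{2n},\C^\times)\cong H^2(D_{2n},\Z)$, the three nonzero degree-2 classes are $c_1(\chi_r), c_1(\chi_s), c_1(\chi_{rs})$. Since $\langle a\rangle\oplus\langle b\rangle\cong C_2\oplus C_2$, the classes $a$ and $b$ are two distinct ones among these. The relation $b^2+ab+\frac{n^2}{4}d=0$ could pin down which is which, but the argument should not need this.

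Second, for any two distinct nonzero classes $u,v\in H^2(D_{2n},\Z)$, I claim that $\Theta_2(u^2),\Theta_2(v^2)$ together with $\Theta_2(c_2(\sigma_1))=(-\alpha_r^2,0,0)$ generate the target. Squaring the formulas of \Cref{lemma dihedral even} componentwise (recall $\Theta$ is a ring map) gives:
$$\Theta_2(c_1(\chi_r)^2)=\left(\tfrac{n^2}{4}\alpha_r^2,0,\alpha_{rs}^2\right),\quad \Theta_2(c_1(\chi_s)^2)=(0,\alpha_s^2,\alpha_{rs}^2),$$
$$\Theta_2(c_1(\chi_{rs})^2)=\left(\tfrac{n^2}{4}\alpha_r^2,\alpha_s^2,0\right).$$
The element $(-\alpha_r^2,0,0)$ generates the $C_n$ factor. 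Modulo that factor, the three squares project to $(0,1),(1,1),(1,0)$ in $C_2\oplus C_2$, and any two of these span $C_2\oplus C_2$. Hence the three images generate the whole target.

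Finally, since $\Theta_2$ is an isomorphism, $c_2(\sigma_1), a^2, b^2$ generate $H^4(D_{2n},\Z)$.

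The only potentially delicate point is the identification of $a,b$ with characters. The pairwise argument avoids needing the exact identification, provided $a$ and $b$ are nonzero and distinct, which follows from $H^2=\langle a\rangle\oplus\langle b\rangle\cong C_2\oplus C_2$. I also need to check that the first component $\frac{n^2}{4}\alpha_r^2$ causes no trouble. It does not, because the generator of the $C_n$ factor is already available and that component can be subtracted off.
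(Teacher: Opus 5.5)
Your proposal is correct and takes essentially the paper's route: reduce via the isomorphism $\Theta_2$ of \Cref{theta2iso}, then check that the images of $c_2(\sigma_1)$ and of two squares of degree-$2$ classes span $C_n\oplus C_2\oplus C_2$. The only difference is the final bookkeeping. The paper uses the specific pair $c_1(\chi_s)^2, c_1(\chi_{rs})^2$ and then implicitly uses $(a+b)^2=a^2+b^2$ to land in $\langle c_2(\sigma_1),a^2,b^2\rangle$. You instead verify that every pair of squares works, which avoids identifying $a$ and $b$ with particular characters.
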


\begin{proof}
Since $\Theta_2$ is an isomorphism and the elements $\Theta_2(c_2(\sigma_1)), \Theta_2(c_1(\chi_s)^2), \Theta_2(c_1(\chi_{rs})^2)$ generate the group $H^4(R, \Z)\oplus H^4(\Gamma,Z)\oplus H^4(\Omega,\Z)$. So the elements $c_2(\sigma_1), c_1(\chi_s)^2$ and $c_1(\chi_{rs})^2$ generate $H^4(D_{2n},\Z)$. Since $c_1(\chi_s)$ and $c_1(\chi_{rs})$ take the values $a, b$ or $a+b$, we conclude that $c_2(\sigma_1), a^2, b^2$ generate $H^4(D_{2n},\Z)$. 
\end{proof}

\vspace{1ex}
\subsubsection{The Case of $n\equiv 0\pmod 4$}~\vspace{1ex}
\begin{theorem}\label{tcn4}
The total Chern classes of nontrivial irreducible representations of $D_{2n}$ for $n\equiv 0\pmod 4$ are given by
\begin{align*}
 c(\chi_r)&= 1+a+b\\
 c(\chi_s)&=1+a\\
 c(\chi_{rs})&=1+b\\
 c(\sigma_k)&=1+a+k^2d'.
\end{align*}
\end{theorem}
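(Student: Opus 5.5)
The proof identifies the three nonzero classes of $H^2(D_{2n},\Z)$ with $a$, $b$, $a+b$ using the injective map $\Theta_1$ of \Cref{theta1inj}. It then computes $c_2(\sigma_k)$ through the isomorphism $\Theta_2$ of \Cref{theta2iso}, with $d'$ defined as $c_2(\sigma_1)$. For linear characters the total Chern class is $1+c_1$, and $c_1$ is additive on tensor products. Since $\chi_r=\chi_s\otimes\chi_{rs}$, we get $c_1(\chi_r)=c_1(\chi_s)+c_1(\chi_{rs})$. The statement $c(\chi_r)=1+a+b$, $c(\chi_s)=1+a$, $c(\chi_{rs})=1+b$ therefore reduces to showing that the presentation of \Cref{thm : cohomology of D_2n for n even} allows $a=c_1(\chi_s)$ and $b=c_1(\chi_{rs})$.

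To pin down which class is $a$, I use the relation $b^2+ab+\frac{n^2}{4}d=0$. When $n\equiv 0\pmod 4$, $\frac{n^2}{4}$ is a multiple of $n$ and $nd=0$, so the relation becomes $b(a+b)=0$. Among the three nonzero degree-$2$ classes, exactly one pair multiplies to zero, and $a$ is the remaining class. I compute the pairwise products with $\Theta_2$, using \Cref{lemma dihedral even}(1)--(3), the fact that $\Theta$ is a ring map, and $\frac{n^2}{4}\alpha_r^2=0$ in $H^4(R,\Z)\cong\Z/n$:
\begin{align*}
\Theta_2(c_1(\chi_r)c_1(\chi_s))&=(0,0,\alpha_{rs}^2),\\
\Theta_2(c_1(\chi_r)c_1(\chi_{rs}))&=(\tfrac{n^2}{4}\alpha_r^2,0,0)=0,\\
\Theta_2(c_1(\chi_s)c_1(\chi_{rs}))&=(0,\alpha_s^2,0).
\end{align*}
Since $\Theta_2$ is injective, only $c_1(\chi_r)c_1(\chi_{rs})$ vanishes. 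The classes $\{b,a+b\}$ must then be $\{c_1(\chi_r),c_1(\chi_{rs})\}$, which forces $a=c_1(\chi_s)$.

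The substitution $b\mapsto a+b$ preserves all the defining relations, because $(a+b)^2+a(a+b)=b^2+ab$ when $2a=0$, and the relations $c^2+ad$ and $2b=0$ do not involve $b$ in a way that changes. So we may take $b=c_1(\chi_{rs})$, and this gives the three formulas for the linear characters.

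For $\sigma_k$, \Cref{c1d2n} (its argument $\det\sigma_k=\chi_s$ works for $n$ even as well) gives $c_1(\sigma_k)=c_1(\chi_s)=a$. For $c_2$, \Cref{lemma dihedral even}(5) gives $\Theta_2(c_2(\sigma_k))=(-k^2\alpha_r^2,0,0)=k^2\,\Theta_2(c_2(\sigma_1))$. Injectivity of $\Theta_2$ then yields $c_2(\sigma_k)=k^2c_2(\sigma_1)=k^2d'$. Here $d'=c_2(\sigma_1)$, which by the preceding proposition together with $a^2,b^2$ generates $H^4(D_{2n},\Z)$, and which has order $n$.

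The only point requiring care is \Cref{lemma dihedral even}(5), which is stated but whose proof is omitted in the paper. I would verify it directly. Restricting $\sigma_k$ to $R$ gives $\chi_k\oplus\chi_k^{-1}$, so $\iota_1^*c_2(\sigma_k)=-k^2\alpha_r^2$. Restricting to $\Gamma$ or to $\Omega$ gives $\mathbb{1}\oplus\mathrm{sgn}$ (the matrices $\sigma_k(s)$ and $\sigma_k(rs)$ are involutions with eigenvalues $\pm1$), whose $c_2$ vanishes. The identification of $a$ above is the main obstacle, since it depends on how the generators in \cite{handel1993products} are normalized; the product computation is what resolves it.
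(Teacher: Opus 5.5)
Your proposal is correct and follows the same route as the paper. You reduce the Handel relation to $b(a+b)=0$, detect which product of first Chern classes vanishes by restricting to $R,\Gamma,\Omega$, read off $a=c_1(\chi_s)$ and $b=c_1(\chi_{rs})$, and get $c_1(\sigma_k)=a$ and $c_2(\sigma_k)=k^2d'$ from the detection map. Your version is also more careful than the paper's: you use the degree-$4$ map $\Theta_2$ (the paper cites $\Theta_1$), check that the other two products are nonzero so that $a$ is forced rather than assumed, justify the choice between $b$ and $a+b$ by an automorphism of the presentation, and write out the $c_2(\sigma_k)$ step that the paper leaves implicit.
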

\begin{proof}
Since $n\equiv 0\pmod 4$, the relation $b^2+ab+\frac{n^2d}{4}=0$ in the cohomology ring reduces to $b(a+b)=0$. Observe that $\Theta_1(c_1(\chi_r)c_1(\chi_{rs}))=0$. Since $\Theta_1$ is injective we have $c_1(\chi_r)c_1(\chi_{rs})=0$. Therefore, we can assume $a=c_1(\chi_s)$ and $b=c_1(\chi_{rs})$.
One computes $\Theta_1(c_1(\sigma_k))=\Theta_1(c_1(\chi_s))$. Since $\Theta_1$ is injective  we have $c_1(\sigma_k)=c_1(\chi_s)=a$.    
\end{proof}
~\vspace{1ex}
\subsubsection{The Case of $n\equiv 2\pmod 4$}~\vspace{1ex}
 Next we compute the total Chern classes for irreducible representations of $D_{2n}$ for $n\equiv 2\pmod 4$.
\begin{theorem}
The total Chern classes of nontrivial irreducible representations of $D_{2n}$ for $n\equiv 2\pmod 4$ are given by
\begin{align*}
 c(\chi_r)&= 1+a+b\\
 c(\chi_s)&=1+a\\
 c(\chi_{rs})&=1+b\\
 c(\sigma_k)&=1+a+k^2d'.
\end{align*}
\end{theorem}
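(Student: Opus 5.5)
Since $\Theta_1$ is injective (\Cref{theta1inj}), every first Chern class is determined by its three restrictions, and since $\Theta_2$ is an isomorphism (\Cref{theta2iso}), so is every class in $H^4(D_{2n},\Z)$. The plan is to rerun the argument of \Cref{tcn4}, with one change forced by $n\equiv 2\pmod 4$. Here $n^2/4$ is odd, and the relation $b^2+ab+\frac{n^2}{4}d=0$ no longer reads $b(a+b)=0$. So I cannot simply declare $b=c_1(\chi_{rs})$ and $a=c_1(\chi_s)$ from that relation, and the identification of generators must be redone.

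\textbf{Step 1: fix the generators.} The three nonzero elements of $H^2(D_{2n},\Z)\cong C_2\oplus C_2$ are $c_1(\chi_r)$, $c_1(\chi_s)$ and $c_1(\chi_{rs})$. Since $\chi_r\chi_s=\chi_{rs}$, their first Chern classes satisfy $c_1(\chi_r)=c_1(\chi_s)+c_1(\chi_{rs})$. The presentation of \Cref{thm : cohomology of D_2n for n even} is only determined up to an automorphism fixing the relations. I would set $a:=c_1(\chi_s)$ and $b:=c_1(\chi_{rs})$, and then check that this choice is compatible with the relation $b^2+ab+\frac{n^2}{4}d=0$ for a suitable generator $d$.

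To do this, compute $\Theta_2$ of $b^2+ab=c_1(\chi_{rs})c_1(\chi_r)$ using \Cref{lemma dihedral even}:
$$\Theta_2\big(c_1(\chi_{rs})c_1(\chi_r)\big)=\Big(\tfrac{n^2}{4}\alpha_r^2,0,0\Big).$$
This is nonzero, since $n^2/4\not\equiv 0 \pmod n$ when $n\equiv 2\pmod 4$. Hence the relation demands $\Theta_2(\frac{n^2}{4}d)=(\frac{n^2}{4}\alpha_r^2,0,0)$ (signs are irrelevant in characteristic $2$). I would then define $d'$ as the element with $\Theta_2(d')=(-\alpha_r^2,0,0)$, namely $d'=c_2(\sigma_1)$, and verify that $d=d'$ or $d=-d'$ satisfies the presentation's relations, including $c^2+ad=0$. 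This gives $c(\chi_s)=1+a$, $c(\chi_{rs})=1+b$ and $c(\chi_r)=1+a+b$.

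\textbf{Step 2: the two-dimensional representations.} By \Cref{lemma dihedral even}(4) and the injectivity of $\Theta_1$, $c_1(\sigma_k)=c_1(\chi_s)=a$. For $c_2$, \Cref{lemma dihedral even}(5) gives $\Theta_2(c_2(\sigma_k))=(-k^2\alpha_r^2,0,0)=k^2\,\Theta_2(d')$. Since $\Theta_2$ is an isomorphism, $c_2(\sigma_k)=k^2d'$. The classes $c_i(\sigma_k)$ for $i\ge3$ vanish because $\sigma_k$ is two-dimensional.

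\textbf{Main obstacle.} The delicate point is Step 1: making sure that the assignment $a=c_1(\chi_s)$, $b=c_1(\chi_{rs})$, $d=\pm d'$ is genuinely compatible with the Handel presentation when $n\equiv2\pmod4$. The concern is that the relation might instead force $b$ to be $c_1(\chi_r)$. I would test the alternatives by computing $\Theta_2(b^2+ab)$ for each possible choice of $b$ and comparing with $\Theta_2(\frac{n^2}{4}d)$, using that $\Theta_2$ is an isomorphism. The check of $c^2+ad=0$ cannot be done by restriction to cyclic subgroups, since it lives in degree $6$. I expect it to be absorbed by the freedom to choose the degree-$3$ generator $c$, but this needs to be confirmed.
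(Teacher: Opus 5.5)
Your proposal follows the paper's argument. You fix the labelling of $H^2(D_{2n},\Z)$ by matching $c_1(\chi_r)c_1(\chi_{rs})$ against the Handel relation $b(a+b)=\tfrac{n^2}{4}d$ (signs are irrelevant). You then read off $c_1(\sigma_k)=a$ and $c_2(\sigma_k)=k^2d'$ from injectivity of $\Theta_1$ and bijectivity of $\Theta_2$. The only difference is how you get $c_1(\chi_r)c_1(\chi_{rs})=\tfrac{n^2}{4}d'$. The paper goes through $\sigma_k\otimes\chi_r\cong\sigma_{n/2-k}$ and the tensor formula for $c_2$, which already presupposes $c_2(\sigma_j)=j^2d'$, and then applies $\Theta$ anyway. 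You use multiplicativity of restriction directly, which is shorter. Your worry that the relation might force $b=c_1(\chi_r)$ is unfounded, since $b^2+ab=b(a+b)$ is invariant under $b\leftrightarrow a+b$.

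The paper never checks the step you leave open, $c^2+ad=0$, but the way out you suggest does not exist. Since $H^3(D_{2n},\Z)=\langle c\rangle\cong C_2$, the class $c$ is forced and there is no freedom to choose it. The relation follows algebraically instead.

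In Handel's original generators, multiply $b_H(a_H+b_H)=-\tfrac{n^2}{4}d_H$ by $a_H$. Since $\tfrac{n^2}{4}$ is odd and $a_Hd_H$ is $2$-torsion, this gives $c^2=a_Hd_H=a_Hb_H(a_H+b_H)$. That is the product of the three nonzero classes of $H^2$, so it does not depend on any labelling. For your labelling, $b(a+b)=\tfrac{n^2}{4}d'$ gives in the same way $ad'=\tfrac{n^2}{4}ad'=ab(a+b)=c^2$, hence $c^2+ad'=2ad'=0$.

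It remains to see that your relabelling really is a Handel presentation. By the paper's proposition, $c_2(\sigma_1),c_1(\chi_s)^2,c_1(\chi_{rs})^2$ (that is, $d',a^2,b^2$) generate $H^4$, so $a,b,c,d'$ generate the ring. The presented ring has the same finite order as $H^*(D_{2n},\Z)$ in each positive degree. So the induced surjection is an isomorphism.
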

\begin{proof}
We have $\sigma_k \otimes \chi_r \cong \sigma_{n/2-k}.$
Moreover
$$c_2(\sigma_k \otimes \chi_r)= c_2(\sigma_k) + c_1(\sigma_k)c_1(\chi_r) + (c_1(\chi_r))^2. $$ 
On the other hand 
\begin{align*}
c_2(\sigma_{n/2-k})&=(n/2-k)^2 d'\\
&= \left((n/2)^2-nk +k^2\right) d'\\
&=\left(n^2/4 + k^2 \right) d'    
\end{align*}
Equating both equations, we get 

$\left((n/4)^2+k^2\right) d'= k^2 d' +c_1(\sigma_k)c_1(\chi_r) + (c_1(\chi_r))^2.$
Also, $c_1(\sigma_k)= c_1(\chi_s)$. So we obtain $$c_1(\chi_r)\left( c_1(\chi_s) + c_1(\chi_r) \right)= (n^2/4) d' . $$

Applying $\Theta$ to both sides obtains

$$\Theta_1(c_1(\chi_r)) \Theta_1(c_1(\chi_r) + c_1(\chi_s))= ((n^2/4) \alpha_r^2, 0,0). $$
Note that for the $n \equiv 2 \pmod 4$ case, we have $c_1(\chi_r) c_1(\chi_{rs})= \frac{n^2}{4}d'$ and the cohomology ring relation $b^2 +ab+ \frac{n^2}{4}d=0.$
Put $a'=a+b$. The relation becomes $a'b=\frac{n^2}{4}$.

Comparing both these we conclude that the following choices 
\begin{itemize}
    \item $\{ c_1(\chi_r), c_1(\chi_{rs}) \} = \{ b, a'\} $ and $d'=d$. Since, by \Cref{theta1inj} and \Cref{theta2iso}, we have $\Theta(c_1(\chi_r)c_1(\chi_{rs}))=(\frac{n^2}{4},0,0)$. Hence, $c_1(\chi_r)c_1(\chi_{rs})= \Theta^{-1}((\frac{n^2}{4},0,0)) = \frac{n^2}{4}d'$. Thus $d=d'.$
    \item  $\{c_1(\chi_s), c_1(\chi_{rs}) \}= \{b,a'\}$ and $\Theta^{-1}(2n',1,0)= d.$ Since, by \Cref{theta1inj} and \Cref{theta2iso}, we have $\Theta(c_1(\chi_s)c_1(\chi_{rs}))=(0,1,0)$. Hence, $c_1(\chi_s)c_1(\chi_{rs})= \Theta^{-1}((0,1,0)) = \frac{n^2}{4}\Theta^{-1}((2n',1,0)),$ where $n'$ is coprime to $n.$ Note that, $\Theta^{-1}((2n',1,0))$ is of order $n.$
    \item $\{ c_1(\chi_{r}), c_1(\chi_{s})\} = \{b,a'\}$ and $\Theta^{-1}(2n',0,1)=d$ Since, by \Cref{theta1inj} and \Cref{theta2iso}, we have $\Theta(c_1(\chi_r)c_1(\chi_{s}))=(0,0,1)$. Hence, $c_1(\chi_s)c_1(\chi_{rs})= \Theta^{-1}((0,0,1)) = \frac{n^2}{4}\Theta^{-1}((2n',0,1),$ where $n'$ is coprime to $n.$ Note that, $\Theta^{-1}((2n',0,1))$ is of order $n.$
\end{itemize}
Hence, we can assign $c_1(\chi_s)=a, c_1(\chi_{rs})=b$ and $c_1(\chi_{r})=a+b.$
\end{proof}
\section{Realizable Integral Cohomology Classes of $D_{2n}$}\label{sec: realizability of chern classes of D2n}
\begin{lemma}\label{surjective}
Consider the restriction map $\Psi: H_C^*(D_{2km})\to H_C^*(D_{2k})$, where $m$ is a positive integer. 
\begin{enumerate}
\item 
If $k$ is odd, then  $\Psi$ is surjective for any positive integer $m$.
\item 
If $k$ is even, then $\Psi$ is surjective if $m$ is odd.
\end{enumerate}
\end{lemma}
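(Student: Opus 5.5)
The plan is to identify $D_{2k}$ with the subgroup $\langle r^m, s\rangle\subseteq D_{2km}$, so that $\Psi$ is the map induced on cohomology by this inclusion, and to reduce surjectivity to a statement about restricting representations. The key point is naturality of Chern classes: for any representation $\pi$ of $D_{2km}$ we have $\Psi(c_i(\pi))=c_i(\pi|_{D_{2k}})$. Hence $\Psi$ sends $\mathcal{RC}(D_{2km})$ into $\mathcal{RC}(D_{2k})$, and so sends $H_C^*(D_{2km})$ into $H_C^*(D_{2k})$. Since $\Psi$ is a ring homomorphism, it suffices to show that every generator of $H_C^*(D_{2k})$ lies in the image.

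Next I would reduce the generators to Chern classes of irreducible representations. By the Whitney sum formula $c(\pi\oplus\pi')=c(\pi)c(\pi')$, every $c_i(\pi)$ is a polynomial in the classes $c_j(\varphi)$ with $\varphi$ irreducible. So $H_C^*(D_{2k})$ is generated as a ring by $c_1(\varphi)$ and $c_2(\varphi)$, where $\varphi$ runs over the irreducible representations $\mathbb 1,\chi_s,\sigma_j$, together with $\chi_r,\chi_{rs}$ when $k$ is even. It is therefore enough to show that each such $\varphi$ is the restriction of some representation of $D_{2km}$.

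This is a direct check on the generators $r^m$ and $s$.
\begin{itemize}
\item For $\sigma_j$ of $D_{2km}$ with $1\le j\le \lfloor (k-1)/2\rfloor$, which is in range since $j<km/2$, we have $\sigma_j(r^m)=\diag(\zeta_{km}^{jm},\zeta_{km}^{-jm})=\diag(\zeta_k^{j},\zeta_k^{-j})$ and $\sigma_j(s)$ is unchanged. So it restricts to $\sigma_j$ of $D_{2k}$. For $k$ even the same holds for $1\le j\le k/2-1$.
\item The character $\chi_s$ of $D_{2km}$ restricts to $\chi_s$, since it is trivial on $r$.
\end{itemize}
This settles part (1) for every $m$, because for $k$ odd the irreducibles of $D_{2k}$ are exactly $\mathbb 1,\chi_s,\sigma_j$.

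For part (2), with $k$ even, the remaining irreducibles are $\chi_r$ and $\chi_{rs}$ of $D_{2k}$, which send $r^m\mapsto -1$. Here $km$ is even, so $D_{2km}$ has characters $\chi_r$ and $\chi_{rs}$, and $\chi_r(r^m)=(-1)^m$. When $m$ is odd this equals $-1$, so $\chi_r$ and $\chi_{rs}$ of $D_{2km}$ restrict to $\chi_r$ and $\chi_{rs}$ of $D_{2k}$. This completes the argument.

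The only delicate point is this last step, where the parity of $m$ enters. It is also where one would see that the argument fails for $m$ even, although the lemma only claims sufficiency.
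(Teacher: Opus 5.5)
Your proposal is correct and takes essentially the same route as the paper. Both embed $D_{2k}=\langle r^m,s\rangle$, use naturality of Chern classes under restriction, and check that $\sigma_j$, $\chi_s$ and, for $m$ odd, $\chi_r,\chi_{rs}$ of $D_{2k}$ are restrictions of the corresponding representations of $D_{2km}$. The only minor difference is that you lift every irreducible directly rather than just the generators $a_2,b_2,d_4'=c_2(\sigma_1)$, so you do not need the formula $c_2(\sigma_j)=j^2d_4'$.
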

\begin{proof}
We denote the generators of $D_{2n}$ by $r_n$ and $s$. We can realize $D_{2k}$ as a subgroup of $D_{2km}$ by sending $ r_{k} \mapsto r_{km}^m, s \mapsto s$. For $n$ even, we write the representations of $D_{2n}$ as $\chi_{r,n}, \chi_s, \chi_{rs,n}, \sigma_{k,n}$.

Furthermore, when $n$ is odd, we write $a_{2,n}$ and $d_{4,n}$ for the cohomology classes $a_2$ and $d_4\in H^*(D_{2n}, \Z)$ respectively. When $n$ is even,  we additionally denote the class $b_2\in H^*(D_{2n}, \Z)$ by $b_{2,n}$.

The ring $H^*_C(D_{2n,\Z})$ is generated by the Chern classes of irreducible reprsentations of $D_{2n}$. So we have
\[
H_C^*(D_{2n},\Z)=
\begin{cases}
 \langle a_2, d_4'\rangle, & \text{when $n$ is odd},\\
 \langle a_2,b_2, d_4'\rangle & \text{when $n$ is even}.
\end{cases}
\]
In both cases recall that $d_4'=c_2(\sigma_1)$. 

Observe that $\sigma_{1,km}\mid_{D_{2k}}=\sigma_{1,k}$. Therefore $\Psi(d'_{4,km})=d'_{4,m}$. Also $\Psi(a_{2,km})=a_{2,k}$.
For $k$ odd, $H_C^*(D_{2k},\Z)$ is generated by $a_{2,k}$ and $d_{4,k}$. So we conclude that $\Psi$ is surjective.

Now consider the case where $k$ is even. We compute
$$\chi_{r,km}(r_{km}^m)=(-1)^m.$$
This shows that $\chi_{r,km}\mid_{D_{2k}}=\chi_{r,k}$ if $m$ is odd, otherwise the restriction is trivial. Similarly $\chi_{rs,km}\mid_{D_{2k}}=\chi_{rs,k}$ if $m$ is odd, otherwise the restriction gives $\chi_{s,k}$. So if $m$ is even,  $\Psi(b_{2,km})=a_{2,k}$ and $b_{2,k}$ has no preimage. Therefore $\Psi$ is not surjective. On the other hand, for $m$ odd, $\Psi(b_{2,km})=b_{2,k}$. Therefore $\Psi$ is surjective.   
 \end{proof}
\begin{theorem}\begin{enumerate}
\item 
For $k$ is odd, if $\mathcal{RC}(D_{2k})\neq \mathcal{H}^{\text{even}}(D_{2k})$, then $\mathcal{RC}(D_{2km})\neq H^{\text{even}}(D_{2km})$ where $m$ is any positive integer.
\item 
For $k$ is even, if $\mathcal{RC}(D_{2k})\neq \mathcal{H}^{\text{even}}(D_{2k})$, then $\mathcal{RC}(D_{2km})\neq \mathcal{H}^{\text{even}}(D_{2km})$ where $m$ is an odd integer.
\end{enumerate}    
\end{theorem}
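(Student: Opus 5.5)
We argue by contrapositive: assuming $\mathcal{RC}(D_{2km})=\mathcal{H}^{\text{even}}(D_{2km})$, we show that every even-degree class of $D_{2k}$ is realizable. The tool is the restriction $\iota^*:H^*(D_{2km},\Z)\to H^*(D_{2k},\Z)$ along the embedding $r_k\mapsto r_{km}^m$, $s\mapsto s$ from the proof of \Cref{surjective}. The whole argument rests on naturality of Chern classes: for any representation $\pi$ of $D_{2km}$ and any $i\geq 1$, $\iota^*(c_i(\pi))=c_i(\pi\mid_{D_{2k}})$. Hence $\iota^*$ maps $\mathcal{RC}(D_{2km})$ into $\mathcal{RC}(D_{2k})$.

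Fix $\alpha\in H^{2i}(D_{2k},\Z)$ with $i>0$. The next step is to exhibit a preimage $\tilde\alpha\in H^{2i}(D_{2km},\Z)$ with $\iota^*(\tilde\alpha)=\alpha$. By the standing assumption, $\tilde\alpha=c_j(\pi)$ for some representation $\pi$ of $D_{2km}$ and some $j>0$. Since the degree of $c_j(\pi)$ is $2j$ and $\tilde\alpha$ is nonzero of degree $2i$, we must have $j=i$. Then $\alpha=\iota^*(c_i(\pi))=c_i(\pi\mid_{D_{2k}})$ is realizable. If $\alpha=0$ it is realizable trivially, for instance as $c_i$ of a sum of trivial representations, provided the definition allows this. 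Hence $\mathcal{RC}(D_{2k})=\mathcal{H}^{\text{even}}(D_{2k})$, which is the contrapositive of the statement.

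The main obstacle is producing the preimage $\tilde\alpha$, that is, showing $\iota^*$ is surjective onto $H^{\text{even}}(D_{2k},\Z)$. I would obtain this from \Cref{surjective} in two steps.

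\begin{enumerate}
\item \Cref{surjective} gives surjectivity of $\Psi:H_C^*(D_{2km})\to H_C^*(D_{2k})$ in exactly the two cases of the theorem: $k$ odd with $m$ arbitrary, and $k$ even with $m$ odd.
\item It remains to check that $H_C^*(D_{2k},\Z)$ contains all even-degree classes. By \Cref{thm: cohomology of D_2n for n odd} and \Cref{thm : cohomology of D_2n for n even}, the even part of $H^*(D_{2k},\Z)$ is generated by $a_2,d_4$ when $k$ is odd, and by $a,b,d$ together with $c^2=ad$ when $k$ is even. The computed Chern classes supply these generators:
\begin{itemize}
\item $a$ and $b$ are first Chern classes of linear characters;
\item $d_4=c_2(\sigma_1)$ for $k$ odd, by \Cref{thmisod2n};
\item for $k$ even, $c_2(\sigma_1),a^2,b^2$ generate $H^4$, so $d$ is a polynomial in realizable classes;
\item the class $c^2=ad$ is then also such a polynomial.
\end{itemize}
Hence $H_C^*(D_{2k})\supseteq H^{\text{even}}(D_{2k})$.
\end{enumerate}

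Combining the two steps, any $\alpha\in H^{2i}(D_{2k},\Z)$ lies in the image of $\Psi$, so it lifts to some $\tilde\alpha\in H^*(D_{2km},\Z)$. Projecting $\tilde\alpha$ to its degree-$2i$ component keeps it a preimage, because $\iota^*$ preserves degree. This degree bookkeeping, together with the cohomology-ring check in the second step, is where the real care is needed; the remaining steps are formal.
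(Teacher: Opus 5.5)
Your proposal is correct and is the argument the paper intends: its proof only says the result follows from \Cref{surjective}, implicitly combined with the naturality statement of \Cref{realize}, and your contrapositive is exactly that combination. You also make explicit a step the paper leaves unstated, namely that $H_C^*(D_{2k},\Z)$ already contains every even-degree class, so surjectivity of $\Psi$ on $H_C^*$ really does lift an arbitrary class of $\mathcal{H}^{\text{even}}(D_{2k})$; your check of this via $a$, $b$, $c_2(\sigma_1)$ and $c^2=ad$ is sound.
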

\begin{proof}
    These follow from \Cref{surjective}.
\end{proof}
\vspace{1ex}
\subsection{The Realizable Classes $\mathcal{RC}(D_{2n})$ with $n$ odd}~~\vspace{1ex}
Any representation $\rho$ of $D_{2n}$ can be written as $\rho= x \mathbb{1} \oplus y \chi_s \oplus \bigoplus\limits_{k} z_k \sigma_k.$ 
We note an immediate observation about the $4k+2$-nd cohomology group for any $D_{2n}$, $n$ is odd.

\begin{theorem}
    Any cohomology class in $H^{4k+2}(D_{2n}, \mb Z)$ is realizable, where $n$ is odd.
\end{theorem}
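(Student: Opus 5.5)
The plan is to first compute the group $H^{4k+2}(D_{2n},\Z)$ explicitly from \Cref{thm: cohomology of D_2n for n odd}, and then realize each of its (very few) elements by hand using sums of copies of $\chi_s$.

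First, by \Cref{thm: cohomology of D_2n for n odd}, $H^*(D_{2n},\Z)=\Z[a_2,d_4]/(2a_2,nd_4)$. So $H^{4k+2}$ is spanned by the monomials $a_2^i d_4^j$ with $2i+4j=4k+2$. This forces $i$ to be odd, in particular $i\geq 1$. Any monomial with $j\geq 1$ contains the factor $a_2d_4$, which is killed by both $2$ and $n$. Since $\gcd(2,n)=1$, we get $a_2d_4=0$. Hence the only surviving monomial is $a_2^{2k+1}$, which is annihilated by $2$. It is nonzero, because in the presentation the ideal $(2a_2,nd_4)$ does not contain $a_2^{2k+1}$; equivalently, its restriction to $\Gamma$ is $\beta^{2k+1}\neq 0$, using $\iota_2^*(a_2)=\beta$ from the proof of \Cref{thmisod2n}. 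Thus
\[
H^{4k+2}(D_{2n},\Z)\cong C_2=\langle a_2^{2k+1}\rangle .
\]

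Next, realize the two elements. For the generator, take $\pi=(2k+1)\chi_s$. By \Cref{c1d2n} and the Whitney sum formula, $c(\pi)=(1+a_2)^{2k+1}$, so $c_{2k+1}(\pi)=a_2^{2k+1}$. For the zero class, take $\pi=(2k+1)\mathbb{1}$, or $\pi=(2k+2)\chi_s$, whose $(2k+1)$-st Chern class is $(2k+2)a_2^{2k+1}=0$. In either case the Chern class index is $2k+1>0$, as required by the definition of $\mathcal{RC}$.

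The only step needing any care is the structural computation that $a_2d_4=0$ and that $a_2^{2k+1}\neq0$. This follows directly from the oddness of $n$ and the given ring presentation, so I expect no real obstacle.
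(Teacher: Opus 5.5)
Your proof is correct and follows the same approach as the paper: identify $H^{4k+2}(D_{2n},\Z)\cong C_2$ as generated by $a_2^{2k+1}$, then realize it as $c_{2k+1}$ of $(2k+1)\chi_s$. You also justify that $a_2d_4=0$ and $a_2^{2k+1}\neq 0$, and you realize the zero class explicitly, both of which the paper leaves implicit.
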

\begin{proof}
    Since $a_2^{2k+1}$ is the generator of $H^{4k+2}(D_{2n}, \mb Z) \cong \mb Z_2$ and $c_{2k+1}(\chi_s^{2k+1})=a_2^{2k+1},$ the result follows. 
\end{proof}
However it is not true that all cohomology classes of $D_{2n}$, for $n$ odd, are realizable. Here is one such instance.
\begin{example}\label{ex: D_6 example}
\normalfont{
    Our claim is that for $n=3$, the class $2d_4^2 \in H^{8}(D_6, \mb Z)$ cannot appear as a Chern class. This is because the binomial coefficients $\binom{z_k}{2}$ modulo $3$ never takes the value $2.$ Indeed, if there  exists some $z_k$ such that $ \binom{z_k}{2} \equiv 2 \pmod 3$ then $z_k(z_k-1) \equiv 1 \pmod 3.$ But then $(z_k+1)^2 \equiv 2 \pmod 3$, which is a contradiction as the only quadratic residues modulo $3$ are $0$ and $1$. }
\end{example}

 Let $\psi: G \to H$ be a group homomorphism between two groups $G$ and $H$. Let $\psi^*: H^n(H)  \to H^n(G)$ be the induced map at the group cohomology level. Let $\psi^*(c_H)= c_G$, where $c_H$ and $c_G$ be cohomology classes.
\begin{lemma} \label{realize}
	If $c_G \notin \mathcal{RC}(G)$ then $c_H \notin \mathcal{RC}(H).$
\end{lemma}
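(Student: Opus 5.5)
We argue by contraposition: assuming $c_H \in \mathcal{RC}(H)$, we show $c_G \in \mathcal{RC}(G)$. The whole argument rests on the naturality of Chern classes under pullback along group homomorphisms.

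\emph{Step 1.} Suppose $c_H = c_k(\pi)$ for some complex representation $\pi : H \to \GL_N(\C)$ and some $k>0$. Form the pulled-back representation $\psi^*\pi := \pi\circ\psi : G \to \GL_N(\C)$. This is a genuine complex representation of $G$, since a composite of group homomorphisms is a group homomorphism.

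\emph{Step 2.} We check naturality: $c_k(\pi\circ\psi) = \psi^*(c_k(\pi))$. By definition, $c_k(\pi) = \pi^*(c_k)$, where $\pi^* : H^*(BGL_N(\C),\Z)\to H^*(H,\Z)$ is induced by $B\pi : BH \to BGL_N(\C)$. Functoriality of the classifying space construction gives $B(\pi\circ\psi) = B\pi\circ B\psi$ up to homotopy. Hence
$$(\pi\circ\psi)^* = \psi^*\circ\pi^*$$
on cohomology, and therefore
$$c_k(\pi\circ\psi) = \psi^*\pi^*(c_k) = \psi^*(c_k(\pi)) = \psi^*(c_H) = c_G.$$

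\emph{Step 3.} By Step 2, $c_G$ is the $k$-th Chern class of a representation of $G$, with the same $k>0$. Since $\psi^*$ preserves degree, $c_G$ lies in $H^{2k}(G,\Z)\subset \mathcal{H}^{\text{even}}(G,\Z)$. So $c_G \in \mathcal{RC}(G)$, which contradicts the hypothesis. This proves the lemma.

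There is no real obstacle here. The only point needing care is the identification in Step 2 of the map induced by $\psi$ on group cohomology with the map induced by $B\psi$ on classifying spaces. This is the standard identification $H^*(G,\Z)\cong H^*(BG,\Z)$, and it is implicit in the paper's definition of Chern classes.
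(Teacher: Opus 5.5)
Your proposal is correct and is essentially the paper's own proof. Both argue by contradiction: assuming $c_H=c_k(\pi)$, naturality gives $c_G=\psi^*(c_k(\pi))=c_k(\pi\circ\psi)\in\mathcal{RC}(G)$. Your version just spells out the classifying-space functoriality and the degree bookkeeping that the paper leaves implicit.
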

\begin{proof}
	If $c_H \in \mathcal{RC}(H)$, then there exists a representation $\pi$ of $H$ such that $c_H = c_n(\pi)$. But then $c_G = \psi^*(c_H)= \psi^*(c_n(\pi)) = c_n(\pi \circ \psi) \in \mathcal{RC}(G).$ This is a contradiction. Hence $c_H \notin \mathcal{RC}(H)$.
\end{proof}
 
\begin{corollary}\label{cor: D_6m, m odd}
	There is always a cohomology class in $H^*(D_{6m}, \Z)$ which is not realized by any representation. 
\end{corollary}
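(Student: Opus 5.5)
We deduce this from \Cref{ex: D_6 example} via \Cref{realize}, applied to the inclusion $\psi: D_6 \hookrightarrow D_{6m}$ given by $r_3 \mapsto r_{3m}^m$, $s \mapsto s$, as in the proof of \Cref{surjective}. By \Cref{realize}, it suffices to find a class $c_H \in H^8(D_{6m},\Z)$ with $\psi^*(c_H) = 2d_{4,3}^2$. That example shows $2d_{4,3}^2$ is not realizable for $D_6$, so $c_H$ is then not realizable for $D_{6m}$.

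The natural candidate is $c_H = 2(d'_{4,3m})^2$, where $d'_{4,3m} = c_2(\sigma_{1,3m})$. In the proof of \Cref{surjective} we observed that $\sigma_{1,3m}\mid_{D_6} = \sigma_{1,3}$. Naturality of Chern classes then gives $\psi^*(c_2(\sigma_{1,3m})) = c_2(\sigma_{1,3}) = d_{4,3}$; for $n = 3$ odd, this equality $c_2(\sigma_1) = d_4$ was established in \Cref{thmisod2n}. Since $\psi^*$ is a ring homomorphism,
$$\psi^*\bigl(2(d'_{4,3m})^2\bigr) = 2d_{4,3}^2.$$
This argument works uniformly for $m$ odd or even, because only the restriction of $\sigma_1$ enters. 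The parity conditions in \Cref{surjective} concern $\chi_r$ and $\chi_{rs}$, which we never use.

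The one point needing care is the restriction $\sigma_{1,3m}\mid_{D_6} \cong \sigma_{1,3}$. We check it directly: $r_{3m}^m$ acts by $\diag(\zeta_{3m}^m, \zeta_{3m}^{-m}) = \diag(\zeta_3, \zeta_3^{-1})$, and $s$ acts by the same swap matrix in both representations.

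We should also confirm that \Cref{ex: D_6 example} genuinely covers all representations of $D_6$, not only those in which $\sigma_1$ alone contributes to $c_4$. We expand the total Chern class $(1+a_2)^{y}(1+a_2+d_4)^{z_1}$ of $\rho = x\mathbb{1} \oplus y\chi_s \oplus z_1\sigma_1$ and read off the $d_4^2$-component of $c_4(\rho)$, using $H^8(D_6) \cong C_3 \oplus C_2$ generated by $d_4^2$ and $a_2^4$. Mixed terms $a_2^i d_4^j$ with $i, j > 0$ have order dividing $\gcd(2,3) = 1$, so they vanish. The $d_4^2$-coefficient is therefore exactly $\binom{z_1}{2} \bmod 3$, which matches the example's claim. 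This expansion is the main thing to double-check; once it holds, the corollary follows immediately.
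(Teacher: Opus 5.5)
Your proof is correct and is essentially the paper's argument: the paper combines \Cref{ex: D_6 example} with \Cref{realize} and the surjectivity in \Cref{surjective}(1) (with $k=3$ odd, so any $m$ works), and that surjectivity rests on exactly your observation $\sigma_{1,3m}\mid_{D_6}\cong\sigma_{1,3}$. You make the preimage $2c_2(\sigma_{1,3m})^2$ of $2d_4^2$ explicit, and your check that the $d_4^2$-coefficient of $c_4$ is $\binom{z_1}{2}$ (mixed terms vanish because $a_2d_4=0$) correctly confirms that the example covers all representations of $D_6$.
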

\begin{proof}
By \Cref{ex: D_6 example}, $2d^2 \notin \mathcal{RC}(D_6).$ The remaining part follows from \Cref{realize} and \Cref{surjective}.	
\end{proof}

\begin{theorem}\label{thm: D10 realizable}
Every cohomology class in $H^{4k}(D_{10}, \mb Z)$ is realizable as a Chern class $c_{2k}$ of some representation.
\end{theorem}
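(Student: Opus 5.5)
The plan is to reduce the problem to two independent congruence problems, one modulo $2$ and one modulo $5$, and to solve each using Lucas' theorem. For $n=5$, \Cref{thm: cohomology of D_2n for n odd} gives $H^*(D_{10},\Z)=\Z[a_2,d_4]/(2a_2,5d_4)$. Every mixed monomial $a_2^{i}d_4^{j}$ with $i,j>0$ is killed by both $2$ and $5$, so it vanishes. Hence
$$H^{4k}(D_{10},\Z)=\langle d_4^k\rangle\oplus\langle a_2^{2k}\rangle\cong C_5\oplus C_2,$$
and a target class has the form $u\,d_4^k+\varepsilon\, a_2^{2k}$ with $u\in\Z/5$ and $\varepsilon\in\Z/2$.

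A general representation is $\rho=x\mathbb 1\oplus y\chi_s\oplus z_1\sigma_1\oplus z_2\sigma_2$. By the total Chern class theorem for $n$ odd, $c(\chi_s)=1+a_2$, $c(\sigma_1)=1+a_2+d_4$ and $c(\sigma_2)=1+a_2+4d_4=1+a_2-d_4$. Because all mixed products vanish, each factor $1+a_2\pm d_4$ behaves like $(1+a_2)(1\pm d_4)$. Therefore
$$c(\rho)=(1+a_2)^{N}\,(1+d_4)^{z_1}(1-d_4)^{z_2},\qquad N=y+z_1+z_2,$$
again up to mixed terms, which are zero. It follows that
$$c_{2k}(\rho)=\binom{N}{2k}a_2^{2k}+u(z_1,z_2)\,d_4^k,$$
where $u(z_1,z_2)$ is the coefficient of $d_4^k$ in $(1+d_4)^{z_1}(1-d_4)^{z_2}$, read mod $5$. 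The key structural point is that $y\ge 0$ is free, so $N$ can be any integer at least $z_1+z_2$, independently of the choice of $z_1,z_2$.

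\emph{The $2$-primary part.} Fix $z_1,z_2$ and choose $M$ with $2^M>\max(2k,\,z_1+z_2)$. By Lucas' theorem, $N=2^M$ gives $\binom{N}{2k}$ even, and $N=2^M+2k$ gives $\binom{N}{2k}\equiv 1 \pmod 2$. Both values satisfy $N\ge z_1+z_2$, so any $\varepsilon$ is attained.

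\emph{The $5$-primary part.} This is the step I expect to be the main obstacle: showing that $u(z_1,z_2)$ takes every residue mod $5$. The tool I would use is that $(1+d_4)^{5^M}=1+d_4^{5^M}$ and $(1-d_4)^{5^M}=1-d_4^{5^M}$ in $H^*(D_{10},\Z)$. Choosing $5^M>k$, the exponents $z_1,z_2$ may therefore be taken as arbitrary integers, negative ones included, modulo $5^M$, without changing the coefficient of $d_4^k$.

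The candidate families I would test, reading coefficients via Lucas' theorem on the base-$5$ digits of $k$, are:
\[
\begin{aligned}
(z_1,z_2)=(0,0)&:\ u=0,\\
(z_1,z_2)=(A,0)&:\ u=\binom{A}{k},\\
(z_1,z_2)=(A,1)&:\ u=\binom{A}{k}-\binom{A}{k-1},\\
(z_1,z_2)=(-1,0)&:\ u=(-1)^k.
\end{aligned}
\]
From $(A,0)$ I get $\binom{A}{k}\equiv \binom{A_0}{k_0}\cdots$, where $A_0,k_0$ denote base-$5$ digits. For $k=1$ the family $(z_1,z_2)$ arbitrary already gives $u=z_1-z_2$, which is any residue. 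For general $k$, my first attempt would be $z_1=A$, $z_2=B$ with $A$ and $B$ agreeing with $k$ except in the lowest nonzero digit of $k$, reducing to a single digit where the product formula gives a small explicit polynomial in that digit. Failing that, I would fall back on a finite check of $u(z_1,z_2)$ over $0\le z_1,z_2<5^{M}$, organized by the base-$5$ expansion of $k$.

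Once $(z_1,z_2)$ realizing $u$ is fixed, choosing $N$ as in the $2$-primary step and setting $y=N-z_1-z_2$ produces a representation $\rho$ with $c_{2k}(\rho)=u\,d_4^k+\varepsilon\,a_2^{2k}$, which proves the theorem.
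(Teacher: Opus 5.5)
Your setup is correct and agrees with the paper's. This covers the vanishing of $a_2d_4$, the formula $c_{2k}(\rho)=\binom{N}{2k}a_2^{2k}+u(z_1,z_2)\,d_4^k$ with $u$ the $x^k$-coefficient of $(1+x)^{z_1}(1-x)^{z_2}$ modulo $5$, and the $2$-primary step via the free parameter $y$ and Lucas' theorem. The gap is the $5$-primary step, which is the real content of the theorem and which you leave as a plan rather than an argument.

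\begin{itemize}
\item The families you list do not suffice. For $k=3$, Lucas' theorem shows they produce only the residues $\{0,1,3,4\}$; for example, $(A,1)$ gives $\binom{A_0}{3}-\binom{A_0}{2}\in\{0,3,4\}$, where $A_0$ is the last base-$5$ digit of $A$. For $k=4$ they produce only $\{0,1,2,4\}$.
\item The digit-by-digit reduction is not justified. The product formula over base-$5$ digits holds only when $a_i+b_i\le 4$ at every digit. In that regime a digit $2$ of $k$ contributes $\binom{a}{2}-ab+\binom{b}{2}\in\{0,1,3,4\}$, so the residue $2$ is unreachable already for $k=2$. The paper gets it from $(4,1)$, which has a carry. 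Once carries are allowed the digits interact, and you give no way to control this.
\item A ``finite check'' over $0\le z_1,z_2<5^M$ is not a proof, since $M$ grows with $k$.
\end{itemize}

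The missing ingredient is a choice of $(z_1,z_2)$ that works uniformly in $k$. The paper's table rests on taking $z_1=k+c$ with $c$ and $z_2$ small. Then $u=\sum_j(-1)^j\binom{z_2}{j}\binom{k+c}{c+j}$. If $c+z_2\le 4$, every lower index is a single digit, so Lucas' theorem replaces $\binom{k+c}{c+j}$ by $\binom{(k_0+c)\bmod 5}{c+j}$, where $k_0$ is the last digit of $k$. Thus $u$ depends only on $k\bmod 5$, and finitely many choices settle all $k\not\equiv 0\pmod 5$. For instance:
\begin{itemize}
\item $(k,0)$ and $(k-1,1)$ give $1$ and $4$ for every $k$;
\item $(k+1,3)$ gives $2$ when $k\equiv 3$;
\item $(k-1,3)$ gives $3$ when $k\equiv 4\pmod 5$.
\end{itemize}
For $5\mid k$, the paper writes $k=5^im$ with $5\nmid m$. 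It then uses the Frobenius identity $(1\pm x)^{5^i}\equiv 1\pm x^{5^i}\pmod 5$, which you already have, to turn a solution $(\alpha_1,\alpha_2)$ for $m$ into the solution $(5^i\alpha_1,5^i\alpha_2)$ for $k$. You use that identity only to make the exponents periodic.
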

\begin{proof}
We now focus on $D_{10}$ and inspect whether $a_2^{2k}+md_4^k$ can appear as a Chern class $c_{2k}$ for $m \in \mb Z_5$. Observe that $a_2 \smile d_4 =0$, since $a_2$ and $d_4$ have coprime orders. for any representation $\rho$ of $D_{10}$, the total Chern class $$c(\rho) =\left(1+a_2\right)^y\left(1+a_2 + d_4\right)^{z_1}\left(1+a_2 + 4d_4\right)^{z_2}.$$ The coefficient of $a_2^{2k}$ is $\binom{y+z_1+z_2}{2k} \pmod 2$ and the coefficient of $d_4^k$ is $$\binom{z_1}{k}-\binom{z_1}{k-1}z_2+\binom{z_1}{k-2}\binom{z_2}{2}- \dots + (-1)^{k-1}z_1 \binom{z_2}{k-1} +(-1)^k\binom{z_2}{k} \pmod 5.$$
We first look at the coefficient of $d_4$ and see if they cover all possible nonzero residues modulo $5.$ 
We summarize the computations for different cases in the following table. Let $c_d$ denote the coefficient of $d_4^k$.
\[
\begin{tabular}{ |c|c|c| }
 \hline
$k \pmod 5$ & $(z_1, z_2)$ & $c_d\pmod 5$ \\
\hline
\multirow{2}{4em} {$0,1,2,3,4$} & $(k,0)$ & $1$  \\
\cline{2-3}
& $(k-1,1)$ & $4$ \\
\hline
\multirow{2}{4em} {$1$} & $(k+2,1)$ & $2$  \\
\cline{2-3}
& $(k+2,0)$ & $3$ \\
\hline
\multirow{2}{4em} {$2$} & $(k+2,1)$ & $2$  \\
\cline{2-3}
& $(k,2)$ & $3$ \\
\hline
\multirow{2}{4em} {$3$} & $(k+1,3)$ & $2$  \\
\cline{2-3}
& $(k,1)$ & $3$ \\
\hline
\multirow{2}{4em} {$4$} & $(k,1)$ & $2$  \\
\cline{2-3}
& $(k-1,3)$ & $3$ \\
\hline
\end{tabular}
\]
 
It only remains to show that we obtain the residue classes $2$ and $3$ for the case $k \equiv 0 \pmod5$.
Observe that $$C=\binom{z_1}{k}-\binom{z_1}{k-1}z_2+\binom{z_1}{k-2}\binom{z_2}{2}- \dots + (-1)^{k-1}z_1 \binom{z_2}{k-1} +(-1)^k\binom{z_2}{k}$$ is the coefficient of $x^k$ in the expression $(1+x)^{z_1}(1-x)^{z_2}.$
When $k \not\equiv 0 \pmod 5$, we always have solutions as described above.  
Let $k=5^i \cdot m$ where $\gcd(5,m)=1$. Then we have solutions $(\alpha_1, \alpha_2)$ integers such that $\alpha_1+\alpha_2 \geq m$ and the coefficient of $x^m$ in $(1+x)^{\alpha_1}(1-x)^{\alpha_2}$ is congruent to $2 \text{ or } 3 \pmod 5$. If we put $(z_1, z_2) = (5^i\alpha_1, 5^i \alpha_2)$, then the coefficient of $x^{5^i m=k}\,\,$in $(1+x)^{z_1}(1-x)^{z_2} \pmod 5$ is
\begin{align*}
    (1+x)^{z_1}(1-x)^{z_2}&= (1+x)^{5^i \alpha_1}(1-x)^{5^i \alpha_2}\\
    &=(1+x^{5^i})^{\alpha_1}(1-x^{5^i})^{\alpha_2}
\end{align*}
Now put $y=x^{5^i}$, then we have $$(1+x)^{z_1}(1-x)^{z_2}=(1+y)^{\alpha_1}(1-y)^{\alpha_2}$$
So the coefficient $C$, which is the coefficient of $x^{5^i m}$,  is the coefficient of $y^m$, in the above expression. Thus we can choose it to be $2$ or $3$ as we like. 

The coefficient of $a_2^{2k}$ is $\binom{y+z_1+z_2}{2k}$. For chosen of $(z_1, z_2)$ and $k$ we need to show that $\binom{y+z_1+z_2}{2k}$ attains both odd and even values. For that we take
$$n=\mathrm{max}\{\log_2 (z_1+z_2), \log_2 2k\}+1.$$
Then $2k< 2^n$. One easily checks using Lucas's theorem that
\[
\binom{y+z_1+z_2}{2k}=
\begin{cases}
0\pmod 2, & \text{if $y=2^n-z_1-z_2$},\\    
1\pmod 2, & \text{if $y=2^n-z_1-z_2-1$}.
\end{cases}
\]
\end{proof}
As it turns out the realization question is tricky for general $D_{2n}$. However, we offer the following consequence:
\begin{theorem}
    Let $p$ be an odd prime and $q$ is a power of $p$. Then every cohomology class in $H^{4q}(D_{2p},\mb Z)$ is realizable as a Chern class $c_{2q}$ of some representation of $D_{2p}$.
\end{theorem}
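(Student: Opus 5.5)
We follow the template of \Cref{thm: D10 realizable}. For $n=p$ odd prime, $H^{4q}(D_{2p},\Z)\cong C_p\oplus C_2$ is generated by $d_4^{q}$ and $a_2^{2q}$. Also $a_2d_4=0$ because the two classes have coprime orders. So every class has the form $\epsilon a_2^{2q}+m d_4^{q}$ with $\epsilon\in\Z/2$ and $m\in\Z/p$. For a representation $\rho=y\chi_s\oplus\bigoplus_k z_k\sigma_k$ we have
$$c(\rho)=(1+a_2)^y\prod_{k=1}^{(p-1)/2}(1+a_2+k^2d_4)^{z_k}.$$
Hence $c_{2q}(\rho)=\binom{y+\sum z_k}{2q}a_2^{2q}+C\,d_4^{q}$, where $C$ is the coefficient of $x^{q}$ in $\prod_k(1+k^2x)^{z_k}$ modulo $p$. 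We must show that $C$ takes every residue mod $p$, and that the parity of the $a_2^{2q}$-coefficient can then be adjusted independently.

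The key step uses the Frobenius trick from the proof of \Cref{thm: D10 realizable}. Write $q=p^i$ and take $z_k=p^i w_k$. In characteristic $p$ we have $(1+k^2x)^{p^i}=1+k^{2p^i}x^{p^i}=1+k^2x^{p^i}$, since $k^{2p^i}\equiv k^2$ by Fermat. Putting $Y=x^{p^i}$, the coefficient of $x^{q}$ becomes the coefficient of $Y$ in $\prod_k(1+k^2Y)^{w_k}$, which is $\sum_k w_k k^2$. The problem is thus reduced to degree one, i.e.\ to the case $c_2$. There, taking $w_1=m$ (the residue in $\{0,\dots,p-1\}$) and all other $w_k=0$ gives $C\equiv m$. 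In fact $\sigma_1$ alone suffices, since the $\sigma_1$-factor $(1+a_2+d_4)$ contributes $\binom{z_1}{q}\equiv\binom{w_1}{1}$ by Lucas. One caveat: the $a_2$-part and the $d_4$-part of $(1+a_2+d_4)^{z}$ must be treated separately. Since $a_2d_4=0$, we have $(1+a_2+d_4)^z=(1+a_2)^z+(1+d_4)^z-1$, so the $d_4^q$-coefficient is exactly $\binom{z}{q}$. The case $m=0$ is handled either by taking $z_1=0$, or by using any $z_1$ with $\binom{z_1}{q}\equiv 0$.

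The parity step comes next. With $z_1=p^i w_1$ fixed, choose $y$ as in \Cref{thm: D10 realizable}. Take $N$ with $2^N>\max(z_1,2q)$, and set $y=2^N-z_1$ or $y=2^N-z_1-1$. Then the total multiplicity is $2^N$ or $2^N-1$, so $\binom{y+z_1}{2q}$ is even or odd respectively by Lucas's theorem, because $0<2q<2^N$. Changing $y$ does not affect the $d_4$-coefficient, since $\chi_s$ contributes only powers of $a_2$. Hence all $2p$ classes $\epsilon a_2^{2q}+md_4^q$ are realized as $c_{2q}$.

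The main obstacle I expect is bookkeeping rather than anything deep. One must confirm that no mixed terms $a_2^{j}d_4^{l}$ survive, which holds because $a_2d_4=0$. One must also check that the Lucas and Frobenius reduction is applied to the correct exponent, namely $q=p^i$ in degree $4q$, corresponding to $c_{2q}$. If $i=0$ the statement reduces directly to the computation $c_2(z_1\sigma_1)=z_1d_4$.
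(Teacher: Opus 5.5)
Your proposal is correct and is essentially the paper's argument: use only $\chi_s$ and $\sigma_1$, take $z_1=mq$ so that Lucas gives $\binom{z_1}{q}\equiv m \pmod p$, and choose $y$ so that Lucas mod $2$ fixes the parity of $\binom{y+z_1}{2q}$ (the Frobenius detour through all the $\sigma_k$ is not needed). Your version is slightly more complete than the paper's: it realizes both parities of the $a_2^{2q}$-coefficient and uses the right bound $2^N>\max(z_1,2q)$, which guarantees $y\ge 0$, whereas the paper only exhibits the odd case via $y=2^M+2q-rq$.
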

\begin{proof}
The coefficient of $a_2^{2q}$ and $d_4^q$ in $\left(1+a_2\right)^y  \left(1+a_2 +d_4\right)^{z_1}$ are $\binom{y+z_1}{2q}$ and $\binom{z_1}{q}$. Choose $z_1=rq$ where $r \in \mb Z_p$ and $y= 2^M+2q-rq$ with $M> \nu_p(q)$. Then $\binom{z_1}{q} \equiv \binom{r}{1} \pmod p$ and $\binom{y+rq}{2q}$ odd, by Lucas's Theorem. 
\end{proof}
\begin{remark}
    In the last theorem, one can actually take $q$ to be an integer whose base $p$ expansion has at least one $1.$
\end{remark}
 \subsection{The Realizable Classes $\mathcal{RC}(D_{2n})$ with $n$ even}~~\vspace{1ex}

 We change the cohomology ring generators of $D_{2n}$ with $n$ even (see \ref{thm : cohomology of D_2n for n even})
 to $\{a', b, c, d'\}$, where $a'=a+b$ and $d'=c_2(\sigma_1)$. As a result, the presentation becomes
 $$H^{*}(\dn,\Z)=\frac{\Z [a',b,c,d']}{\left\langle2a',2b,2c,nd,ba'+\frac{n^2d'}{4}, c^2+ad'\right\rangle}.$$
 
 In particular, we have
 $$H^6(D_8,\Z)=\langle a'^3 \rangle\oplus \langle b^3\rangle\oplus \langle a'd'\rangle \oplus \langle bd'\rangle\cong C_2^4.$$

 \begin{lemma}\label{h6}
     The classes $a'^3 + a'd'$ and $b^3+ bd'$ in $H^6(D_8, \Z)$ are not realizable.
 \end{lemma}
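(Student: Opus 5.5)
The plan is to write down the total Chern class of an arbitrary representation of $D_8$ in the basis $\{a',b,c,d'\}$. Then I extract $c_3$ in the basis $a'^3, b^3, a'd', bd'$ of $H^6(D_8,\Z)\cong C_2^4$, and show that the parity conditions forced by either target class are contradictory. This is the same strategy as in the proof for $V_4$.

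For $n=4$ the irreducible representations are $\mathbb{1},\chi_r,\chi_s,\chi_{rs},\sigma_1$. By \Cref{tcn4} their total Chern classes are $1+a+b=1+a'$, $1+a=1+a'+b$, $1+b$ and $1+a'+b+d'$. The relation $ba'+\frac{n^2}{4}d'=0$ becomes $a'b=4d'=0$, since $4d'=0$. So a general representation $\pi$ has
$$c(\pi)=(1+a')^p(1+b)^t(1+u)^q(1+u+d')^z,\qquad u=a'+b,$$
for nonnegative integers $p,q,t,z$ (the trivial summands contribute nothing). Because $a'b=0$, every mixed monomial in $a',b$ vanishes and $u^j=a'^j+b^j$ for $j\ge 1$. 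Since only $c_3$ lands in $H^6$ and $H^6$ is $2$-torsion, all coefficients can be reduced mod $2$.

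Next I expand to degree $6$, treating $d'$ as having degree $4$. Terms containing $d'^2$ have degree at least $8$ and can be ignored. Write $Q=q+z$.

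\emph{Terms without $d'$.} Setting $d'=0$ gives $(1+a')^{p+Q}(1+b)^{t+Q}$, because mixed terms die. So the $a'^3$ and $b^3$ coefficients are $\binom{p+Q}{3}$ and $\binom{t+Q}{3}$.

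\emph{Terms linear in $d'$.} These come from $z\,d'(1+u)^{z-1}$ multiplied by the remaining factors, and we need their degree-$2$ part. That is $z\,d'\bigl(p a'+t b+(q+z-1)u\bigr)$, which gives $a'd'$ coefficient $z(p+Q-1)$ and $bd'$ coefficient $z(t+Q-1)$.

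Suppose $c_3(\pi)=a'^3+a'd'$. Then $z(p+Q-1)$ must be odd, so $z$ is odd and $p+Q$ is even. But $\binom{N}{3}$ is odd only when the two lowest binary digits of $N$ are $1$ (Lucas's theorem), so in particular $N$ is odd. Hence $\binom{p+Q}{3}$ odd forces $p+Q$ odd, a contradiction. The case $b^3+bd'$ is identical with $t$ in place of $p$ (or follows from the symmetry exchanging the roles of $a'$ and $b$).

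The main obstacle is the bookkeeping in the $d'$-linear term. One has to check carefully that the degree-$2$ factor multiplying $d'$ is exactly $pa'+tb+(q+z-1)u$, and that no other contributions to $a'd'$ or $bd'$ arise, for instance through relations such as $c^2+ad'$. These relations involve $c$, which never appears in Chern classes here, so I expect this to be routine once the expansion is written out.
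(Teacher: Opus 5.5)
Your proposal is correct and follows the paper's proof essentially verbatim. Your $p,q,t,z$ are the paper's $m_1,m_2,m_3,m_4$, the coefficients $\binom{p+Q}{3}$, $\binom{t+Q}{3}$, $z(p+Q-1)$, $z(t+Q-1)$ of $c_3(\pi)$ are the same, and the contradiction is the same: an even $N$ cannot have $\binom{N}{3}$ odd. Your explicit use of $a'b=0$ (from $4d'=0$) to kill the mixed monomials is a cleaner justification than the paper's factor-counting, and your final step states correctly what the paper's garbled last paragraph intends.
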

 \begin{proof}

Any arbitrary representation $\pi$ of $D_8$ is of the form
$$\pi=m_0\mathbb{1}\oplus m_1\chi_r\oplus m_2\chi_s\oplus m_3\chi_{rs}\oplus m_4\sigma_1.$$
Using \Cref{tcn4} we obtain the total Chern class of $\pi$ as
\begin{equation}
c(\pi)=(1+a')^{m_1}(1+a'+b)^{m_2}(1+b)^{m_3} (1+a'+b+d')^{m_4}.
\end{equation} 

We first compute $c_3(\pi)$.
Note that the element $b$ appears in exactly $m_2+m_3+m_4$ factors in the expression, namely $m_2$ factors $(1+a'+b)$, $m_3$ factors $(1+b)$ and $m_4$ factors $(1+a'+b+d')$. Therefore, the coefficient of $b^3$ is $\binom{m_2+m_3+m_4}{3}$. Similarly, one computes the coefficient of $a'^3$ to be $\binom{m_1+m_2+m_4}{3}$.

To compute the coefficient of $bd'$, observe that the occurrence of $d'$ must come from one of the $m_4$ factors of $(1+a'+b+d')$. After choosing a factor $(1+a'+b+d')$, the term $b$ can be chosen from a different factor that contains $b$. There are $m_2$ factors $(1+a'+b)$, $m_3$ factors $(1+b)$, and the remaining $m_4-1$ factors $(1+a'+b+d')$. Thus, for each of the $m_4$ choices of the factor contributing $d'$, there are $m_2+m_3+m_4-1$ choices for the factor contributing $b$.
So the coefficient of $bd'$ is $m_4(m_2+m_3+m_4-1)$. An identical argument shows that the coefficient of $a'd'$ is $m_4(m_1+m_2+m_4-1)$.
\begin{align*}
c_3(\pi)=& \binom{m_1+m_2+m_4}{3}a'^3+\binom{m_2+m_3+m_4}{3}b^3\\
& +m_4(m_1+m_2+m_4-1)a'd'+m_4(m_2+m_3+m_4-1)bd'.    
\end{align*}

     If it were, we must have $\binom{x+w+y}{3}$ even, $\binom{x+w+z}{3}$ odd, $w$ odd, $(x+w+y)$ odd and $(x+w+z)$ even. However, it is not possible that an integer $\beta$ is even but $\binom{\beta}{3}$ is odd.
 \end{proof}
We end the section by an observation analogous to Corollary \ref{cor: D_6m, m odd}.

\begin{corollary}\label{thm: non realizable class for D8k, k odd}
    There exists a cohomology class in $H^6(D_{8k}, \Z)$ for all odd positive integers $k$, which is not realisable.
\end{corollary}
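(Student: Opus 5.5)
The plan is to copy the proof of Corollary \ref{cor: D_6m, m odd}, with $D_8$ in place of $D_6$ and \Cref{h6} in place of \Cref{ex: D_6 example}. Fix an odd $k$. Regard $D_8$ as a subgroup of $D_{8k}$ via $r_4\mapsto r_{4k}^k$, $s\mapsto s$, as in the proof of \Cref{surjective}, and write $\Psi=\iota^*:H^*(D_{8k},\Z)\to H^*(D_8,\Z)$ for the induced restriction. By \Cref{realize}, applied to the inclusion $\iota:D_8\to D_{8k}$, it suffices to find a class $\gamma\in H^6(D_{8k},\Z)$ with $\Psi(\gamma)=a'^3+a'd'$ (or $b^3+bd'$). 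That restriction is not realizable by \Cref{h6}, so $\gamma$ itself cannot be realizable.

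To produce $\gamma$ I would use part (2) of \Cref{surjective}. Here the base group $D_8$ has even parameter and the multiplier $k$ is odd, so $\Psi:H_C^*(D_{8k})\to H_C^*(D_8)$ is surjective. It remains to see that $a'^3+a'd'$ lies in $H_C^*(D_8)$. This is immediate: $a'=c_1(\chi_r)$ by \Cref{tcn4}, and $d'=c_2(\sigma_1)$ by definition, so $a'^3+a'd'$ is a polynomial in Chern classes.

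Concretely, the proof of \Cref{surjective} gives the needed restrictions. Since $k$ is odd, $\chi_{r,4k}|_{D_8}=\chi_{r,4}$ and $\sigma_{1,4k}|_{D_8}=\sigma_{1,4}$. I would therefore take
$$\gamma=c_1(\chi_{r,4k})^3+c_1(\chi_{r,4k})\,c_2(\sigma_{1,4k})\in H^6(D_{8k},\Z).$$
Because $\Psi$ is a ring map compatible with Chern classes, $\Psi(\gamma)=a'^3+a'd'$.

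The only point needing care is checking that the $D_8$ classes $a'$ and $d'$ really are these restrictions. In the source paper's conventions, $a'=a+b$ corresponds to $c_1(\chi_r)$, and $d'=c_2(\sigma_1)$ holds because of the identification $\sigma_{1,4k}|_{D_8}\cong\sigma_{1,4}$. Restricting $r_{4k}^k$ gives eigenvalues $\zeta_{4k}^{\pm k}=\zeta_4^{\pm1}$, so this identification holds. Since $\gamma$ is defined directly in terms of Chern classes, no explicit description of $H^6(D_{8k},\Z)$ is needed.
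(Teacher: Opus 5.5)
Your proposal is correct and is essentially the paper's argument: you restrict along $D_8\hookrightarrow D_{8k}$, use \Cref{surjective} (with $k$ odd) to lift the non-realizable class of \Cref{h6}, and conclude by \Cref{realize}. Your explicit lift $\gamma=c_1(\chi_{r,4k})^3+c_1(\chi_{r,4k})\,c_2(\sigma_{1,4k})$ makes the lifting step precise, since \Cref{surjective} only gives surjectivity onto $H_C^*(D_8,\Z)$, not onto all of $H^*(D_8,\Z)$ as the paper's one-line proof loosely states.
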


\begin{proof}
Since the restriction map $\Psi:H^*(D_{8k},\Z)\to H^*(D_8, \Z)$ is surjective, when $k$ is odd (see \Cref{surjective}), the proof follows from \Cref{h6} and \Cref{realize}.    
\end{proof}

\section{Chern Classes of representations of $Q_{2^n}$}\label{sec: Chern classes for Q2n}
Let $\sigma_k : D_{2^{n-1}} \to O(2)$ be a two degree representation. Consider the double cover $\mathrm{Pin}^{-}(2)$ of $O(2)$ (see Section $13.1$ "Central Extension" in \cite{joshi2026characteristic}). The pullback of this covering map under $\sigma_k$ turns out to be $Q_{2^n}$, the generalized quaternion group, whose all $2^{n-2}-1$ many $2$-dimensional irreducible representatitions $\rho_k$ are induced from the one dimensional representations of the cyclic subgroup of $C_{2^{n-1}},$ which is a double cover of the cyclic subgroup $C_{2^{n-2}}$ of $D_{2^{n-1}}.$ Now, $\rho_k$ descends to $D_{2^{n-1}}$ if and only if $k$ is even. Consequently, $2^{n-3} -1$ many $2$-dimensional irreducible representations of $Q_{2^n}$ descend to $D_{2^{n-1}}.$ The remaining $2^{n-3}$ many of these $\rho_k$ are genuine representations of $Q_{2^n}$, which do not descent to $D_{2^{n-1}}.$  The diagram below captures the relation between the dihedral groups and generalized quaternion groups.
\[
\begin{tikzcd}
C_{2^{n-1}} \arrow[r, hookrightarrow] & Q_{2^n} \arrow[d, "q"] \arrow[r] & \mathrm{Pin}^{-}(2) \arrow[d, "p"]\\
C_{2^{n-2}} \arrow[r, hookrightarrow] & D_{2^{n-1}} \arrow[r, "\sigma_1"] & O(2)
\end{tikzcd}
\]
Consider the induced map at the even cohomology level $$q^*: H_{even}^*(D_{2^{n-1}},\Z) \longrightarrow H_{even}^*(Q_{2^n}, \Z)$$
By the analysis in the last section, $H_{even}^*(D_{2^{n-1}},\Z)$ is generated by the Chern classes $c_2(\sigma_1), c_1(\chi_r)$ and $c_1(\chi_{rs}).$ Hence, by naturality of characteristic classes, one can determine the Chern classes of $\rho_k$ for $k$ even. However, this is not enough to determine the Chern classes of the remaining ``Spin'' representations. This section is devoted for that purpose. We start by recalling the cohomology ring of the Quaternion group $Q_{2^n}$:

\begin{theorem}\cite[Theorem 3]{hayami2002cohomology}\label{thm: cohomology of Q_2^n}
We have 
     $$H^*(Q_{2^n},\Z)=  \dfrac{\Z[a,b,c]}{\langle2a, 2b, 2^nc,a^2, b^2-2^{n-1}c, ab-2^{n-1}c \rangle}$$
where degrees of $a$ and $b$ are $2$ and degree of $c$ is $4$.     
\end{theorem}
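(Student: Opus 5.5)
The statement is quoted from \cite[Theorem 3]{hayami2002cohomology}, but it can be recovered from the representation theory of \Cref{sec: Notation and prelim} and standard facts about periodic groups. The plan has three stages: the additive structure from the free action on $S^3$, the generators from Chern classes, and the three multiplicative relations from the tensor-product isomorphisms of the earlier Proposition. I expect the last stage to be the main obstacle.

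\emph{Additive structure.} The representation $\rho_1$ lands in $SU(2)$ and is fixed-point free, so $Q_{2^n}$ acts freely on $S^3$. Hence $H^*(Q_{2^n},\Z)$ is $4$-periodic, with periodicity given by cup product with the Euler class of this action. That Euler class is $c:=c_2(\rho_1)\in H^4(Q_{2^n},\Z)\cong \Z/2^n$, a generator. From the Gysin sequence of $S^3\to S^3/Q_{2^n}\to BQ_{2^n}$ one reads off the groups in positive degrees:
\begin{itemize}
\item $H^{4j}\cong \Z/2^n$, generated by $c^j$;
\item $H^{4j+2}\cong H^2\cong \mathrm{Hom}(Q_{2^n},\C^\times)\cong C_2\oplus C_2$;
\item the odd-degree groups vanish, using $H^1=0$ and $H^3\cong H_2=0$ for a group with periodic cohomology.
\end{itemize}

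\emph{Generators.} Put $a:=c_1(\epsilon)$ and $b:=c_1(\eta)$; these generate $H^2$, which gives $2a=2b=0$. Since $c$ is the periodicity generator, $H^{4j+2}=\{0,a,b,a+b\}\cdot c^j$, so $a,b,c$ generate the ring and $2^nc=0$. The only remaining relations concern $a^2,b^2,ab\in H^4$. Each is $2$-torsion, so each lies in $\{0,2^{n-1}c\}$.

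\emph{Relations.} I would detect $a^2,b^2,ab$ through the isomorphisms $\rho_k\otimes\epsilon\cong\rho_k$ and $\rho_k\otimes\eta\cong\rho_{2^{n-2}-k}$ of the earlier Proposition, together with $c_1(\rho_k)=c_1(\det\rho_k)$.
\begin{itemize}
\item For $\epsilon$: we have $\det\rho_1=\mathbb 1$, so $c_1(\rho_1)=0$. The splitting principle gives $c_2(\rho_1\otimes\epsilon)=c_2(\rho_1)+c_1(\rho_1)a+a^2$, and the isomorphism $\rho_1\otimes\epsilon\cong\rho_1$ forces $a^2=0$.
\item For $\eta$: set $m=2^{n-2}-1$. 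The same identity gives $b^2=c_2(\rho_m)-c_2(\rho_1)$.
\item To compute $c_2(\rho_k)$, restrict to $\langle x\rangle\cong C_{2^{n-1}}$. As in \Cref{lem1}, $\iota^*c_2(\rho_k)=-k^2\alpha^2$. The restriction $\Z/2^n\to\Z/2^{n-1}$ is surjective with kernel $\{0,2^{n-1}c\}$, so $c_2(\rho_k)=k^2c+t_k2^{n-1}c$ with $t_k\in\Z/2$.
\item For $ab$: apply $\rho_1\otimes\epsilon\eta\cong\rho_m$, giving $c_2(\rho_m)=c_2(\rho_1)+(a+b)^2$. Combined with $a^2=0$ this yields $ab=0$ or $ab=b^2$.
\end{itemize}

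\emph{Main obstacle.} The hard step is pinning down the $2$-torsion ambiguities $t_k$ (equivalently, whether $b^2$ is $0$ or $2^{n-1}c$), since $2^{n-1}c$ is invisible on $\langle x\rangle$ and also on $\langle y\rangle\cong C_4$ when $n\ge 3$.

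I would first try to resolve this by comparing with the dihedral quotient $q:Q_{2^n}\to D_{2^{n-1}}$. For even $k$, $\rho_k=q^*\sigma_{k/2}$, and \Cref{tcn4} together with the relation $ba'+\tfrac{n^2}{4}d'$ in the dihedral ring computes $q^*(b^2)$ exactly.

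If that fails, I would use an explicit periodic resolution of length $4$ for $Q_{2^n}$ to compute the Yoneda product $b\cdot b$ directly. An alternative is the transfer from $\langle x\rangle$: $\mathrm{tr}\circ\mathrm{res}=2$, which identifies the image of $2^{n-1}c$. This should give $b^2=ab=2^{n-1}c$, completing the presentation.
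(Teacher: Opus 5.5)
The paper gives no argument for this statement (it is quoted from Hayami--Sanada), so your reconstruction has to stand on its own. The additive part, the reduction to deciding $a^2,b^2,ab\in\{0,2^{n-1}c\}$, and $a^2=0$ are fine. \textbf{The genuine gap is $ab$.}

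All your tensor identities twist $\rho_1$, and $c_1(\rho_1)=0$, so they only involve $c_1(\chi)^2$. Since $2ab=0$, the map $u\mapsto u^2$ is additive on $H^2$, so such identities can never detect $ab$. Concretely, $\rho_1\otimes\epsilon\eta\cong\rho_m$ gives $(a+b)^2=a^2+b^2=b^2$, which merely repeats the $\eta$-identity. Your conclusion ``$ab=0$ or $ab=b^2$'' therefore does not follow. It is also false when $b^2=0$: for $Q_8$ one has $ab=4c\neq 0$.

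A cross term appears only if you twist some $\rho_k$ with $k$ even, where $c_1(\rho_k)=c_1(\det\rho_k)=a$. Then $\rho_k\otimes\eta\cong\rho_{2^{n-2}-k}$ gives $ab+b^2=\bigl((2^{n-2}-k)^2-k^2\bigr)c=(2^{2n-4}-2^{n-1}k)c=0$ for $n\ge 4$ (granting $c_2(\rho_j)=j^2c$, see below), i.e.\ $ab=b^2$. Alternatively, the projection formula for the transfer from $\Lambda=\langle x\rangle$ settles $ab$ for all $n\ge 3$. The group transfer $Q_{2^n}\to\Lambda$ sends $x\mapsto 1$ and $y\mapsto y^2=x^{2^{n-2}}$, so $\mathrm{tr}(\beta)=c_1(\epsilon)=a$. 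Also $\mathrm{res}(b)=2^{n-2}\beta$ and $\mathrm{tr}(\beta^2)=-\mathrm{tr}(\mathrm{res}\,c)=-2c$. Hence $ab=\mathrm{tr}(\mathrm{res}(b)\,\beta)=2^{n-2}\,\mathrm{tr}(\beta^2)=2^{n-1}c$.

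Your treatment of $b^2$ is also left open. The ``main obstacle'' ends in a list of untested options, and the dihedral route cannot settle it: $m=2^{n-2}-1$ is odd, and only the $\rho_k$ with $k$ even descend to $D_{2^{n-1}}$. What works is to view $Q_{2^n}=\langle x^2,y\rangle\le Q_{2^{n+1}}$; this is what the paper does later to prove $c_2(\rho_k)=k^2c_2(\rho_1)$. Under this embedding $\rho_{k,n+1}$ restricts to $\rho_{k,n}$ and $c_2(\rho_{1,n+1})$ restricts to $c$. So the ambiguity $2^n c_2(\rho_{1,n+1})$ restricts to $0$, and every $t_k=0$. Then $b^2=(m^2-1)c=(2^{2n-4}-2^{n-1})c$, which equals $2^{n-1}c$ only for $n\ge 4$.

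For $n=3$ your own identity ($m=1$, $\rho_1\otimes\eta\cong\rho_1$) gives $b^2=0$. So the presentation as quoted fails for $Q_8$, where $a^2=b^2=0$ and $ab=4c$. The statement needs $n\ge 4$, and your predicted outcome $b^2=ab=2^{n-1}c$ for all $n$ should have caught this.
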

 \begin{theorem}
For the representations $\rho_k$ of $Q_{2^n}$ we have
 
$$
c_1(\rho_k)=
\begin{cases}
0, & \text{if $k$ is odd}\\
c_1(\epsilon), & \text{if $k$ is even}.
\end{cases}
$$

\end{theorem}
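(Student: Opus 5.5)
The plan is to use the fact, already invoked for $D_{2n}$ in \Cref{c1d2n}, that the first Chern class of a representation equals the first Chern class of its determinant. Via the isomorphism $\mathrm{Hom}(G,\C^\times)\cong H^2(G,\Z)$, $\chi\mapsto c_1(\chi)$, we have $c_1(\rho_k)=c_1(\det\rho_k)$. So it suffices to identify the linear character $\det\rho_k$ among $\mathbb{1},\epsilon,\eta,\epsilon\eta$.

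We evaluate $\det\rho_k$ on the generators directly from the matrices in Section \ref{sec: Notation and prelim}.
\begin{itemize}
\item On $x$ we get $\det\rho_k(x)=\zeta_{2^{n-1}}^k\zeta_{2^{n-1}}^{-k}=1$.
\item On $y$, since $\rho_k(y)$ is the antidiagonal matrix with entries $(-1)^k$ and $1$, we get $\det\rho_k(y)=-(-1)^k=(-1)^{k+1}$.
\end{itemize}
Hence $\det\rho_k$ is trivial when $k$ is odd, and $\det\rho_k$ sends $(x,y)\mapsto(1,-1)$ when $k$ is even, i.e.\ $\det\rho_k=\epsilon$.

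Applying $c_1$ and the injectivity of $\chi\mapsto c_1(\chi)$ (so that $c_1(\mathbb{1})=0$) gives exactly the two cases of the statement. There is no real obstacle here. The only points requiring care are the sign convention in $\rho_k(y)$, which produces the parity switch, and the justification of $c_1(\pi)=c_1(\det\pi)$. The latter follows from the splitting principle: restricted to any abelian subgroup, or formally, $c_1$ of a sum of line bundles is the sum of their $c_1$'s, which is $c_1$ of their tensor product.
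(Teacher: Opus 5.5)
Your proposal is correct and matches the paper's proof: both reduce to identifying $\det\rho_k$ via the isomorphism $\mathrm{Hom}(Q_{2^n},\C^\times)\cong H^2(Q_{2^n},\Z)$, and both get $\det\rho_k(x)=1$ and $\det\rho_k(y)=(-1)^{k+1}$. Your version is in fact slightly more careful than the paper's, which writes $\sigma_k$ where it means $\rho_k$ and omits the explicit determinant computation.
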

\begin{proof}
We have the isomorphism
$$c_1: H^2(Q_{2^n}, \mb Z) \to \mathrm{Hom}( Q_{2^n} , \mb C^{\times}),$$
where $c_1$ denotes the first Chern class. So we have $c_1(\rho_k) = c_1 ( \mathrm{det}(\rho_k))$. For $k$  odd, we obtain $\det(\sigma_k)=\mathbb{1}$. On the other hand, for $k$ even, $\det(\sigma_k)=\epsilon$. 
\end{proof}
Now we state a result quantifying the torsion carried by the Chern Classes and adapt \cite[Theorem V.1.8]{adem2013cohomology} as follows:
\begin{theorem}\label{exp}
    For a finite group $G$, let $\phi: G \to SU(2)$ be a faithful representation, then $|G|$ divides $\ord(c_2(\phi)).$
\end{theorem}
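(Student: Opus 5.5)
The statement is \cite[Theorem V.1.8]{adem2013cohomology} adapted to our setting. The plan is to realize $\phi$ as a free action on $S^3$ and read off the order of $c_2(\phi)$ from the resulting periodicity. Since $\phi$ is faithful with image in $SU(2)\cong S^3$, the group $G$ acts freely on $S^3$ by left multiplication through $\phi(G)$. Hence $S^3/G$ is a closed orientable $3$-manifold with fundamental group $G$.

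First I will use the sphere bundle of the rank-two complex bundle $E_\phi=EG\times_G \C^2$ over $BG$. Its unit sphere bundle $S(E_\phi)=EG\times_G S^3$ is homotopy equivalent to $S^3/G$, because the action on $S^3$ is free. The Gysin sequence for this oriented $S^3$-bundle reads
\[
\cdots\to H^{i}(S(E_\phi))\to H^{i-3}(BG)\xrightarrow{\ \smile e\ } H^{i+1}(BG)\to H^{i+1}(S(E_\phi))\to\cdots,
\]
where $e\in H^4(BG,\Z)$ is the Euler class of the underlying real oriented bundle. For a complex bundle the top Chern class equals the Euler class, so $e=c_2(\phi)$.

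Next, since $S^3/G$ is a $3$-manifold, $H^j(S(E_\phi))=0$ for $j\ge 4$. So for $i\geq 4$ the map $\smile c_2(\phi):H^{i-3}(G)\to H^{i+1}(G)$ is an isomorphism, and for $i=3$ it is surjective. Taking $i=3$ gives an exact piece
\[
H^3(S^3/G)\to H^0(G,\Z)=\Z\xrightarrow{\ \smile c_2\ } H^4(G,\Z)\to H^4(S^3/G)=0 .
\]
Thus $H^4(G,\Z)$ is cyclic and generated by $c_2(\phi)$.

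Finally, I will show that $|H^4(G,\Z)|=|G|$. Periodicity gives $H^4(G,\Z)\cong H^{0}$-Tate cohomology $\hat H^0(G,\Z)=\Z/|G|$. Concretely, the isomorphisms $\smile c_2$ in high degrees together with the fact that $\smile c_2$ is the Tate periodicity isomorphism identify $\hat H^{4}(G,\Z)\cong \hat H^0(G,\Z)$. For $\hat H^4=H^4$, this is exact from the sequence above once one checks that the kernel of $\Z\to H^4$ is exactly $|G|\Z$. That check comes from the image of $H^3(S^3/G)=\Z\to H^0=\Z$, which is multiplication by the degree $|G|$ of the covering $S^3\to S^3/G$ (the transfer).

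Hence $c_2(\phi)$ has order exactly $|G|$, and in particular $|G|$ divides $\ord(c_2(\phi))$. The main obstacle is this last identification of the map $H^3(S^3/G)\to H^0(BG)$ with multiplication by $|G|$. If it proves awkward, I would fall back on the standard fact that a group acting freely on $S^{3}$ has periodic cohomology with $H^4(G,\Z)\cong\Z/|G|$, which is exactly the cited result.
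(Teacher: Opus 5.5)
Your proposal is correct and is essentially the paper's argument. Your Gysin sequence for $S^3\to EG\times_G S^3\to BG$ is the two-row Serre spectral sequence the paper uses (cupping with the Euler class $c_2(\phi)$ is the transgression $d_4(x_3)=c_2(\phi)$), and in both proofs the decisive input is that $H^3(S^3/G,\Z)\to H^3(S^3,\Z)$ is multiplication by the covering degree $|G|$. As a bonus, your version shows directly that $H^4(G,\Z)$ is cyclic, generated by $c_2(\phi)$ of order exactly $|G|$. You can drop the Tate-cohomology paragraph: identifying $\smile c_2(\phi)$ with the periodicity isomorphism already presupposes the order computation, which your degree argument supplies on its own.
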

\begin{proof}
    Consider the Borel Construction on $SU(2)$, arising due to the faithful representation $\phi : G \to SU(2)$ yielding a free action of $G$ on $SU(2)$: 
    \[
    \begin{tikzcd}
       E: &SU(2) \arrow[r]\arrow[d] &  (SU(2) \times EG)/G \arrow[r]\arrow[d] & BG \arrow[d, "B\phi"]\\
       {}^GE: & SU(2) \arrow[r] & ESU(2) \arrow[r] & BSU(2)
    \end{tikzcd}
    \]
   The associated spectral sequence to the first row has $$E_2^{p,q}= H^p(G, H^q(SU(2))) \implies H^{p+q}(SU(2)/G)$$ 
    Since, $H^*(SU(2), \Z) = \Lambda[x_3]$. Denote by ${}^GE$ the spectral sequence associated to the universal $G$-bundle, which is the second row in the above diagram. By construction, the class $x_{3} \in {}^GE_2^{0,3}$ transgresses to the universal Chern class $c_2 \in {}^GE_{2}^{4,0}$ under the differential map ${}^Gd_4$. By naturality of spectral sequences, we have,
    $$d_4(x_3) = \phi^*({}^G d_4(x_3))= \phi^*(c_2) = c_2(\phi)$$
    Since $d_{4}\left(\ord(c_2(\phi)).x_{3}\right)=\ord\left(c_2(\phi)\right).c_2(\phi)$ and by definition $\exp\left(c_2(\phi)\right).c_2(\phi)=0$, we have $\ord\left(c_2(\phi)\right).x_{3}$ is a permanent cocycle surviving to the $E_{\infty}$ page. Consequently, $\ord(c_2(\phi)).x_{3} \in \mathrm{im}(\pi^*)$, where $\pi: SU(2) \to SU(2)/G$ is the covering map of oriented manifolds of degree $|G|$. Hence the image of $\pi^*$ on $H^{4}(SU(2)/G, \Z)$ is generated by $|G|.x_3$. Thus, the result follows. 
\end{proof}

We use this theorem to prove the following result.
 
 \begin{lemma}\label{lem: e is generator of H^4}
 The top  Chern class $c_2(\rho_1)$ is a generator of the cohomology group $H^4(Q_{2^n}, \mb Z)$.
\end{lemma}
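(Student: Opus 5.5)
The plan is to apply \Cref{exp} to $\phi=\rho_1$ and compare the resulting order with the size of $H^4(Q_{2^n},\Z)$ read off from \Cref{thm: cohomology of Q_2^n}.

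First, I will check the hypotheses of \Cref{exp}. The matrices $\rho_1(x)=\diag(\zeta_{2^{n-1}},\zeta_{2^{n-1}}^{-1})$ and $\rho_1(y)=\begin{pmatrix}0&-1\\1&0\end{pmatrix}$ both have determinant $1$. This matches the preceding theorem's computation $c_1(\rho_1)=0$. Since $\rho_1$ is unitary and of determinant one, it lands in $SU(2)$. For faithfulness, $\rho_1(x)$ has order exactly $2^{n-1}$, so $\rho_1$ is injective on $\langle x\rangle$. Any element outside $\langle x\rangle$ has the form $x^jy$, and $\rho_1(x^jy)$ is antidiagonal, hence not the identity. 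So $\rho_1$ is a faithful representation $Q_{2^n}\to SU(2)$. \Cref{exp} then gives $2^n=|Q_{2^n}|$ dividing $\ord(c_2(\rho_1))$.

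Next, I will compute $H^4(Q_{2^n},\Z)$ from the presentation in \Cref{thm: cohomology of Q_2^n}. The degree-$4$ monomials are $a^2$, $ab$, $b^2$ and $c$. The relations give $a^2=0$, $ab=2^{n-1}c$ and $b^2=2^{n-1}c$, so $H^4$ is generated by $c$ alone. With $2^nc=0$, it is cyclic of order dividing $2^n$; indeed it has order exactly $2^n$, as is standard for groups with periodic cohomology of period $4$. Since $H^4(Q_{2^n},\Z)$ is cyclic of order at most $2^n$ and contains an element whose order is divisible by $2^n$, that element has order exactly $2^n$ and therefore generates. This proves that $c_2(\rho_1)$ generates $H^4(Q_{2^n},\Z)$, and even without the exact-order fact the divisibility forces $|H^4|=2^n$.

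The main obstacle is the appeal to \Cref{exp}, whose spectral-sequence argument has to be trusted, in particular the step that $\pi^*$ on $H^3$ has image generated by $|G|x_3$ for the degree-$|G|$ cover $SU(2)\to SU(2)/G$. If a more self-contained route is wanted, one could instead use that $Q_{2^n}$ acts freely on $S^3$ via $\rho_1$. The Gysin/periodicity sequence of the sphere bundle then identifies $c_2(\rho_1)$, the Euler class of the underlying real $4$-plane bundle, as the periodicity generator of $H^4$.
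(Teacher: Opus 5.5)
Your proposal is correct and follows the same route as the paper. Faithfulness of $\rho_1$ together with \Cref{exp} gives $2^n \mid \ord(c_2(\rho_1))$, and since $H^4(Q_{2^n},\Z)$ is cyclic of order $2^n$ (you read this off the presentation, which the paper leaves implicit, as it does the fact that $\rho_1$ lands in $SU(2)$ and is faithful), $c_2(\rho_1)$ must be a generator.
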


\begin{proof}
The representation $\rho_1$ of $Q_{2^n}$ is faithful. So by Theorem \ref{exp} we have $2^n$ divides $ \ord(c_2(\rho_1))$. Since $H^4(Q_{2^n}, \mb Z)\cong C_{2^n}$, we conclude that the order of $c_2(\rho_1)$ is $2^n$. In other words, $c_2(\rho_1)$ is a generator of $H^4(Q_{2^n}, \mb Z)$.
\end{proof}
Next, we compute the second Chern classes of the representations $\rho_k$. 
\begin{theorem}
We have   
$$c_2(\rho_k)=k^2c_2(\rho_1).$$
\end{theorem}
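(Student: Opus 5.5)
Since $H^4(Q_{2^n},\Z)\cong C_{2^n}$ is cyclic and generated by $c_2(\rho_1)$ (\Cref{lem: e is generator of H^4}), I will detect $c_2(\rho_k)$ by restricting to the cyclic subgroup $A=\langle x\rangle\cong C_{2^{n-1}}$. Let $\gamma\in H^2(A,\Z)$ be the generator $c_1(\chi)$, where $\chi(x)=\zeta_{2^{n-1}}$. Then $\rho_k|_A=\chi^k\oplus\chi^{-k}$. By the Whitney sum formula,
$$c(\rho_k|_A)=(1+k\gamma)(1-k\gamma)=1-k^2\gamma^2,$$
so $\res^{Q}_{A}(c_2(\rho_k))=-k^2\gamma^2$ in $H^4(A,\Z)\cong C_{2^{n-1}}$. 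In particular $\res(c_2(\rho_1))=-\gamma^2$, and hence $\res(c_2(\rho_k)-k^2c_2(\rho_1))=0$.

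The problem is that restriction $H^4(Q_{2^n})\to H^4(A)$ goes from $C_{2^n}$ to $C_{2^{n-1}}$, so it has a kernel of order $2$, generated by $2^{n-1}c_2(\rho_1)$. The restriction argument therefore only gives
$$c_2(\rho_k)=k^2c_2(\rho_1)+\epsilon_k\,2^{n-1}c_2(\rho_1),\qquad \epsilon_k\in\{0,1\}.$$
Determining $\epsilon_k$ is the main obstacle.

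To kill the ambiguity I will add a second detecting subgroup, $B=\langle y\rangle\cong C_4$, and use the map $H^4(Q_{2^n})\to H^4(A)\oplus H^4(B)$. The kernel element $2^{n-1}c_2(\rho_1)$ should restrict nontrivially to $B$.

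First compute on $B$. The matrix $\rho_k(y)$ has eigenvalues the square roots of $(-1)^k$. For $k$ odd these are $\pm i$, so $\rho_k|_B=\psi\oplus\psi^{-1}$ and $c_2(\rho_k|_B)=-\beta^2$, where $\beta$ generates $H^2(B)\cong C_4$ and $\beta^2$ has order $4$. For $k$ even they are $\pm1$, so $\rho_k|_B=\mathbb 1\oplus\psi^2$ and $c_2=0$.

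Next compare with $k^2c_2(\rho_1)|_B=-k^2\beta^2$. For $k$ odd, $k^2\equiv1\pmod 8$, so this equals $-\beta^2$, which matches. For $k$ even, $k^2\equiv0\pmod 4$, so it is $0$, which also matches. Thus $c_2(\rho_k)-k^2c_2(\rho_1)$ restricts to zero on both $A$ and $B$.

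It remains to check that $2^{n-1}c_2(\rho_1)$ restricts nontrivially to $B$. Indeed $2^{n-1}(-\beta^2)=0$ in $C_4$ when $n\ge3$, so this element dies on $B$ as well, and the pair $(A,B)$ does not separate it. I will therefore switch to restricting to the subgroup $\langle x^{2^{n-2}},y\rangle\cong Q_8$, or more cleanly use a transfer argument. Since $[Q_{2^n}:A]=2$, the composite $\mathrm{tr}\circ\res$ is multiplication by $2$. It kills the order-$2$ kernel but not the ambiguity itself, so this alone does not suffice.

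The most promising route is \Cref{exp}. For $k$ odd, $\rho_k$ is a faithful representation into $SU(2)$, so $c_2(\rho_k)$ has order $2^n$ and is a generator. Writing $c_2(\rho_k)=u\,c_2(\rho_1)$, the restriction to $A$ gives $u\equiv k^2\pmod{2^{n-1}}$. I then need $u\equiv k^2\pmod{2^n}$. To get this I will use that $\rho_k$ is the Galois conjugate $\rho_1^{\tau}$ for $\tau\in\mathrm{Gal}(\Q(\zeta_{2^{n-1}})/\Q)$ with $\tau(\zeta)=\zeta^k$. Equivalently, $\rho_k=\rho_1\circ\theta$ for the automorphism $\theta$ of $Q_{2^n}$ given by $x\mapsto x^k$, $y\mapsto y$, which is valid for $k$ odd. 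Then $c_2(\rho_k)=\theta^*c_2(\rho_1)$.

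The action of $\theta^*$ on $H^4\cong C_{2^n}$ can be computed through $H^4(Q_{2^n})\cong H_3(Q_{2^n})^\vee$, that is, via the free action on $S^3$: $\theta$ acts on $H^4$ by the scalar it induces on the periodicity generator, which is $k^2$ modulo $2^n$.

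For $k$ even, $\rho_k$ factors through $q:Q_{2^n}\to D_{2^{n-1}}$ as $\sigma_{k/2}$. The $D_{2^{n-1}}$ formula $c_2(\sigma_{k/2})=(k/2)^2d'$, together with $q^*d'=c_2(\rho_2)$, gives $c_2(\rho_k)=(k/2)^2c_2(\rho_2)$. This reduces the even case to showing $c_2(\rho_2)=4c_2(\rho_1)$, which I will prove by restricting to $A$ and checking the residual $2^{n-1}$ ambiguity on the $Q_8$ subgroup.
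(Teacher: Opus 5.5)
There is a genuine gap. The one step that carries the content of the theorem is deciding the coefficient of $2^{n-1}e$, with $e=c_2(\rho_1)$, and you never carry it out. Your restriction to $\langle x\rangle$ is exactly the paper's first step and correctly gives $c_2(\rho_k)\in k^2e+\{0,2^{n-1}e\}$.

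No restriction to a subgroup of $Q_{2^n}$ can settle the remaining bit. For any proper subgroup $H$, the order $|H|$ divides $2^{n-1}$ and annihilates $H^4(H,\Z)$, so $2^{n-1}e$ restricts to zero on $H$. Hence your planned check of $c_2(\rho_2)=4c_2(\rho_1)$ ``on the $Q_8$ subgroup'' cannot work. (Incidentally, $\langle x^{2^{n-2}},y\rangle=\langle y\rangle\cong C_4$ since $x^{2^{n-2}}=y^2$; the quaternion subgroup is $\langle x^{2^{n-3}},y\rangle$.) Your dihedral reduction of the even case to $k=2$ is itself fine; only this last step fails.

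In the odd case, $\rho_k=\rho_1\circ\theta$ is correct. But the assertion that $\theta^*$ acts on $H^4\cong C_{2^n}$ by $k^2$ modulo $2^n$ is just the theorem restated. Restriction to $\langle x\rangle$ and faithfulness only give an odd scalar $u\equiv k^2\pmod{2^{n-1}}$, and both candidates $k^2$ and $k^2+2^{n-1}$ are odd. Invoking $H_3^\vee$ or the free action on $S^3$ does not by itself compute the missing bit.

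The paper avoids this by going up rather than down. Embed $Q_{2^n}\hookrightarrow Q_{2^{n+1}}$ via $x\mapsto x^2$, $y\mapsto y$. Then $\rho_{k,n+1}$ restricts to $\rho_{k,n}$, and in particular $e_{n+1}\mapsto e_n$. The cyclic-restriction argument at level $n+1$ gives $c_2(\rho_{k,n+1})=k^2e_{n+1}+\delta\,2^ne_{n+1}$. Since $2^ne_{n+1}\mapsto 2^ne_n=0$, restricting yields $c_2(\rho_{k,n})=k^2e_n$ for all $k$ at once, with no parity split.

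Alternatively, your degree intuition can be made rigorous with an explicit equivariant map. For every $k$, the map $(z_1,z_2)\mapsto (z_1^k,z_2^k)/\|(z_1^k,z_2^k)\|$ is $Q_{2^n}$-equivariant from $S(\rho_1)$ to $S(\rho_k)$; the sign $(-1)^k$ in $\rho_k(y)$ matches. This map has degree $k^2$. Naturality of the Euler class for the Borel sphere bundles over $BQ_{2^n}$ then gives $c_2(\rho_k)=k^2c_2(\rho_1)$ directly.
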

\begin{proof}
Consider the subgroup $\Lambda :=\langle x  \rangle$ of $Q_{2^n}$. Note that $\Lambda\cong C_{2^{n-1}}$. Let $\iota: \Lambda \hookrightarrow Q_{2^n}$ denote the inclusion map. This induces a restriction map  $\iota^*: H^4(Q_{2^n}, \mb Z) \to H^4(\Lambda, \mb Z)$. The cohomology group $H^4(\Lambda,\mb Z)$ is cyclic of order $2^{n-1}$. Let $\beta^2$ be the generator of $H^4(\Lambda,\mb Z)$, where $\beta=c_1(\chi)$.
Restricting $\rho_r$ to $\Lambda$ gives 
$$ \rho_k\circ\iota= \chi^{\otimes k} \oplus (\chi^{-1})^{\otimes k},$$
where $\chi$ is a character of $\Lambda$ given by $\chi(x)=\zeta_{2^{n-1}}$.  
We compute
\begin{align*}
    \iota^*c(\rho_k) &= c(\chi^{\otimes k}) \smile c((\chi^{-1})^{\otimes k})\\
    &=(1+kc_1(\chi) )\smile (1+kc_1(\chi^{-1}))\\
    &=(1+k\beta) \smile (1-k\beta)\\
   & =1-k^2\beta^2.
\end{align*}
Hence, $\iota^*c_2(\rho_k)= -k^2\beta^2.$ 
\begin{comment}
But top Chern class is the Euler class $e$ of the representation $\rho_1$ and we obtain $\mathrm{res}_{C_{2^{n-1}}}^{Q_{2^n}} e = -\beta^2.$    
\end{comment}
 We write $e=c_2(\rho_1)$. 
By \Cref{lem: e is generator of H^4}, the element $e$ is the generator of $H^4(Q_{2^n}, \mb Z)\cong C_{2^n}.$ However, $-\beta^2$ is a generator of $H^4(\Lambda, \mb Z)\cong C_{2^{n-1}}.$ Also we have $\iota^*e=-\beta^2$. Consequently, $\mathrm{Ker} (\iota^*) = \{0, 2^{n-1} e\}.$ Therefore either $c_2(\rho_k)=k^2e$ or $c_2(\rho_1)=k^2e + 2^{n-1}e,$ for $k=1, \dots, 2^{n-2}-1.$ 

To resolve this, we consider $Q_{2^n}$ as a subgroup of $Q_{2^{n+1}}$. In terms of generators we have
$$Q_{2^n}=\langle x^2,y\mid x^{2^{n}}=y^4=1, x^{2^{n-1}}=y^2, y^{-1}xy=x^{-1}  \rangle\leq Q_{2^{n+1}}.$$
The corresponding map in the cohomology level is given by 
$$\mathrm{res}^*:H^4(Q_{2^{n+1}}, \mb Z)   \to H^4(Q_{2^n}, \mb Z). $$
For a fixed $k$, we write $\rho_{k,m}$ to denote the $2$-dimensional representation $\rho_k$ of the group $Q_{2^m}$ and $e_m$ for $c_2(\rho_{1,m})$. Note that the representation $\rho_{k, n+1}$ of $Q_{2^{n+1}}$ restricts to the representation $\rho_{k,n}$ of $Q_{2^n}.$ Therefore,
\begin{equation}
\mathrm{res}^*(e_{n+1})=c_2(\rho_{k,n+1}\mid_{Q_{2^n}})=c_2(\rho_{k,n})=e_n.    
\end{equation}
Recall that we had two choices for $c_2(\rho_{k,n+1})$, namely $e_{n+1}k^2$ or $e_{n+1}k^2+ 2^n e_{n+1}.$ But we have
$$\mathrm{res}^*(e_{n+1} k^2 + 2^n e_{n+1})= k^2\mathrm{res}^* (e_{n+1}) + 2^n\mathrm{res}^*(e_{n+1})= k^2e_n=\mathrm{\res}^*(e_{n+1} k^2).$$
So we conclude that $c_2(\rho_{k,n} )= k^2e_n$ for all $n.$  Thus in the original notation, we have $c_2(\rho_k)=k^2e$.   
\end{proof}

As an immediate consequence, we have the following result.

\begin{corollary}
For $k^2 \equiv 0 \pmod {2^n}$, we have $c_2(\rho_{k})=0.$
\end{corollary}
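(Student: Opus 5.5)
This is immediate from the preceding theorem, which gives $c_2(\rho_k)=k^2c_2(\rho_1)=k^2e$. The plan is to combine that formula with \Cref{lem: e is generator of H^4}.

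By \Cref{lem: e is generator of H^4}, $e=c_2(\rho_1)$ generates $H^4(Q_{2^n},\Z)$. By \Cref{thm: cohomology of Q_2^n}, this group is cyclic of order $2^n$, so $e$ has order exactly $2^n$. Hence, for any integer $m$, we have $me=0$ if and only if $2^n\mid m$.

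Now take $m=k^2$. If $k^2\equiv 0\pmod{2^n}$, write $k^2=2^n t$. Then
$$c_2(\rho_k)=k^2e=t\,(2^n e)=0.$$
The same argument gives the converse, so in fact $c_2(\rho_k)=0$ exactly when $2^n\mid k^2$.

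There is no real obstacle here. The only point needing care is that the formula $c_2(\rho_k)=k^2e$ was established for all $k$ in the range $1\le k\le 2^{n-2}-1$ used to index $\rho_k$, and not only for odd $k$. The preceding theorem's argument, via restriction to $\Lambda$ and the comparison with $Q_{2^{n+1}}$, is uniform in $k$, so it applies to every $k$ in that range.
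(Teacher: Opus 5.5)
Your proof is correct and matches the paper's approach. The paper states the corollary as an immediate consequence of the formula $c_2(\rho_k)=k^2e$, together with the fact that $e=c_2(\rho_1)$ generates $H^4(Q_{2^n},\Z)\cong C_{2^n}$, so $k^2e=0$ whenever $2^n\mid k^2$; your converse is a correct addition.
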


It remains to explicitly determine the first Chern classes of irreducible representations of $Q_{2^n}$. 

\begin{theorem}\label{thm: total CC of Q_2^n}
    The total Chern classes of all irreducible representations of $Q_{2^n}$ are given as follows:
\vskip 2mm
\begin{enumerate}
\item 
$c(\epsilon)=1+a ; \c(\eta)=1+b ; c(\epsilon\eta)=1+a+b;$
\vskip 3mm
\item 
$c(\rho_k) =
\begin{cases}
1+k^2e, & \text{for $k$ odd}\\  
1+a+ k^2e, & \quad \text{for $k$ even}.
\end{cases}
 $
\end{enumerate}
where $a$ and $b$ are the degree 2 generators of $H^*(Q_{2^n}, \mb Z)$ and $e=c_2(\rho_1)$.
\end{theorem}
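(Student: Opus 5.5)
The plan is to assemble the statement from three ingredients: the first Chern classes of the linear characters, the theorem computing $c_1(\rho_k)$, and the theorem giving $c_2(\rho_k)=k^2c_2(\rho_1)$. The one genuinely new point is that the generators $a,b$ of \Cref{thm: cohomology of Q_2^n} can be matched with $c_1(\epsilon)$ and $c_1(\eta)$.

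\textbf{Step 1: identify $a$ and $b$.} We use the isomorphism $\mathrm{Hom}(Q_{2^n},\C^\times)\cong H^2(Q_{2^n},\Z)$, $\chi\mapsto c_1(\chi)$. From \Cref{thm: cohomology of Q_2^n}, $H^2(Q_{2^n},\Z)=\langle a\rangle\oplus\langle b\rangle\cong C_2\oplus C_2$. The three nonzero elements are therefore $c_1(\epsilon)$, $c_1(\eta)$ and $c_1(\epsilon\eta)=c_1(\epsilon)+c_1(\eta)$. The presentation singles out $a$ as the degree-2 generator with $a^2=0$, and satisfying that relation is what pins $a$ down.

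\textbf{Step 2: which character has $c_1^2=0$.} For this I restrict to the cyclic subgroup $\Lambda=\langle x\rangle$.
\begin{itemize}
\item Since $\epsilon(x)=1$, we get $\iota^*c_1(\epsilon)^2=0$.
\item Since $\eta(x)=-1$, we get $\iota^*c_1(\eta)=2^{n-2}\beta$, hence $\iota^*c_1(\eta)^2=2^{2n-4}\beta^2$. This vanishes in $C_{2^{n-1}}$ once $n\ge 3$, so it does not separate the two characters.
\end{itemize}
So instead I use the ring relations directly. We have $(a+b)^2=b^2=2^{n-1}c$ and $ab=2^{n-1}c$, so among $a$, $b$, $a+b$ exactly $a$ squares to zero. (This uses that $2^{n-1}c\neq 0$, since $c$ has order $2^n$.) It therefore suffices to show $c_1(\epsilon)^2=0$ and $c_1(\eta)^2\neq 0$.

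For $c_1(\epsilon)$, the cleanest route is the relation $c_1(\rho_k)=c_1(\epsilon)$ for $k$ even, together with $\rho_k\otimes\epsilon\cong\rho_k$. I expand $c_2(\rho_k\otimes\epsilon)=c_2(\rho_k)+c_1(\rho_k)c_1(\epsilon)+c_1(\epsilon)^2$. Taking $k$ odd, $c_1(\rho_k)=0$, and this gives $c_1(\epsilon)^2=0$ directly.

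\textbf{Step 3: the $\eta$ side.} Similarly, $\rho_1\otimes\eta\cong\rho_{2^{n-2}-1}$ gives
$$c_1(\eta)^2=\bigl((2^{n-2}-1)^2-1\bigr)e=(2^{2n-4}-2^{n-1})e.$$
For $n\ge 3$ we have $2n-4\ge n-1$, so this equals $2^{n-1}e\cdot(\text{odd})\equiv 2^{n-1}e\neq 0$ in $C_{2^n}$, where $e$ is a generator by \Cref{lem: e is generator of H^4}. Hence $c_1(\eta)\neq a$, and similarly $c_1(\epsilon\eta)\neq a$. This forces $c_1(\epsilon)=a$, and we may relabel so that $c_1(\eta)=b$, with $c_1(\epsilon\eta)=a+b$. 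The case $n=2$ (where there are no $\rho_k$) and small $n$ will be treated separately by direct inspection.

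\textbf{Step 4: assemble.} Linear characters have $c=1+c_1$, which gives (1). For (2), combine $c_1(\rho_k)\in\{0,a\}$ according to the parity of $k$ with $c_2(\rho_k)=k^2e$.

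I expect the main obstacle to be Step 1, matching $a$ with $c_1(\epsilon)$ rather than some other order-2 class. The tensor-product trick in Steps 2 and 3 is what I would rely on to settle it.
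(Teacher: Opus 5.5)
Your argument is correct for $n\ge 4$, which is also the range where the paper's identification step works. It uses the same tool as the paper, the identity $c_2(\rho\otimes\lambda)=c_2(\rho)+c_1(\rho)c_1(\lambda)+c_1(\lambda)^2$, but applies it to a different tensor isomorphism and matches against a different ring relation.

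The paper tensors an \emph{even} $\rho_k$ with $\eta$. It uses $\rho_k\otimes\eta\cong\rho_{2^{n-2}-k}$ and $(2^{n-2}-k)^2\equiv k^2 \pmod{2^n}$, which needs $k$ even and $n\ge 4$, to get $c_1(\eta)\,c_1(\epsilon\eta)=0$. It then compares this with $b(a+b)=0$, the only vanishing product of two distinct nonzero degree-$2$ classes.

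You tensor an \emph{odd} $\rho_k$ with $\epsilon$. Since $\rho_k\otimes\epsilon\cong\rho_k$ and $c_1(\rho_k)=0$, you get $c_1(\epsilon)^2=0$ without any congruence bookkeeping. You then compare with $a^2=0$, the only vanishing square. This is slightly more economical and lands on $c_1(\epsilon)$ directly.

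In fact Step 2 alone finishes the job. Once $c_1(\epsilon)=a$, injectivity of $c_1$ puts $c_1(\eta)$ and $c_1(\epsilon\eta)$ in $\{b,a+b\}$. The relabeling $b\mapsto a+b$ is legitimate because it preserves every relation of the presentation, since $(a+b)^2=a(a+b)=2^{n-1}c$. Your remark about using $c_1(\rho_k)=c_1(\epsilon)$ for $k$ even is a red herring: for even $k$ the identity collapses to $2c_1(\epsilon)^2=0$, so only the odd case carries information.

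Step 3 is therefore redundant, and as written it is false at $n=3$. We have $2^{2n-4}-2^{n-1}=2^{n-1}(2^{n-3}-1)$, and the second factor is odd only for $n\ge 4$. For $n=3$ it vanishes, so $c_1(\eta)^2=0$ in $H^4(Q_8,\Z)$.

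Together with Step 2, this says every nonzero class in $H^2(Q_8,\Z)$ squares to zero. That is forced anyway, since automorphisms of $Q_8$ permute $\epsilon,\eta,\epsilon\eta$ transitively and fix $e=c_2(\rho_1)$. So the quoted relation $b^2=2^{n-1}c$ does not hold for $Q_8$; there one has $a^2=b^2=0$ and $ab=4c$.

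Your deferred ``direct inspection'' for $n=3$ should therefore use the correct ring. The uniqueness claim ``exactly $a$ squares to zero'' fails there. By the symmetry, however, any labeling of the three nonzero classes works, and with no even $k$ present the statement is immediate.
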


\begin{proof}
We have the isomorphism
$$c_1: H^2(Q_{2^n}, \mb Z) \to \mathrm{Hom}( Q_{2^n} , \mb C^{\times}),$$
where $c_1$ denotes the first Chern class. From the description of the cohomology ring of $Q_{2^n}$ we obtain $H^2(Q_{2^n}, \mb Z)=\langle a, b\rangle$. In particular, the elements of $H^2(Q_{2^n}, \mb Z)$ are $\{1,a,b,a+b\}.$
On the other hand the elements of $\mathrm{Hom}( Q_{2^n} , \mb C^{\times})$ are $\{\mathbb{1}, \eta, \epsilon, \eta\epsilon\}$. Since $c_1$ is an isomorphism $c_1(\eta)$ and $c_1(\epsilon)$ must be different. 

From \cite[Theorem 4.4.3]{hirzebruch1966topological}, we obtain
\begin{equation}\label{tensor}
c_2( \rho_k \otimes \eta)= c_2(\rho_k)+ c_1(\rho_k)c_1(\eta)+c_1(\eta)^2.    
\end{equation}

For $n=3$, there is only one irreducible degree $2$ representation, namely $\rho_1$. We know that $c_1(\rho_1)=0$. For $n\geq 4$, we have
$$\rho_k \otimes \eta\cong \rho_{2^{n-2}-k}.$$
For $k$ even and $n\geq 4$, one computes 
$$(2^{n-2}-k)^2=2^{2n-4}-k2^{n-1}+k^2\equiv k^2\pmod {2^n}.$$
Therefore 
$$c_2(\rho_{2^{n-2}-k})=k^2e=c_2(\rho_k).$$
Using Equation \eqref{tensor} and the fact that $c_1(\rho_k)=c_1(\epsilon)$ we obtain
$$c_1(\eta)(c_1(\epsilon)+c_1(\eta))=0.$$
Since the cohomology ring has the relation $b(a+b)=0$. So without loss of generality, we can assume $c_1(\eta)=b$ and $c_1(\epsilon)+c_1(\eta)=a+b$. So we have $c_1(\epsilon)=a$.
\end{proof}
For an arbitrary representation $\rho$ of $Q_{2^n}$ we can write it as $$\rho= m_0.\mb 1  \oplus m_{\eta}\eta\oplus m_{\epsilon} \epsilon   \oplus m_{\eta \epsilon} \eta\epsilon \oplus \bigoplus\limits_{ k~ even} m_k \rho_k \oplus \bigoplus\limits_{k ~odd} m_k \rho_k$$
Taking the total Chern class we get 
$$c(\rho)=(1+b)^{m_{\eta}}(1+a)^{m_{\epsilon}}(1+a+b)^{m_{\epsilon\eta}}\prod\limits_{k ~ odd}(1+k^2e)^{m_k}\prod\limits_{k ~ even}(1+a+k^2e)^{m_k}$$

\section{Realizable Integral Cohomology Classes of $Q_{2^n}$}\label{sec: realizability of Q2n}

A question that arises very naturally is whether every cohomology class in $H^*(Q_{2^n}, \mb Z)$ arises as a Chern class of some representation of $Q_{2^n}$. We recall a result that will be applied in the proof of the next lemma with $p=2.$

\begin{theorem}[\cite{kummer1852}]
The $p$-adic valuation of the binomial coefficient $\binom{n}{t}$ is equal to the number of carry overs when performing the addition of $t$ and $n-t$, written in base $p$.  
\end{theorem}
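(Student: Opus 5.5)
We will prove this via Legendre's formula for the $p$-adic valuation of a factorial, and then read off the carries digit by digit. The approach is to express $\nu_p\binom{n}{t}$ as a sum of terms that are each $0$ or $1$, and to identify each such term with the occurrence of a carry at a specific digit position.

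\textbf{Step 1 (Legendre).} For any integer $m\geq 0$ we have
$$\nu_p(m!)=\sum_{i\geq 1}\left\lfloor \frac{m}{p^i}\right\rfloor.$$
This holds because $\lfloor m/p^i\rfloor$ counts the integers in $\{1,\dots,m\}$ divisible by $p^i$. Each integer $j\leq m$ with $\nu_p(j)=v$ is therefore counted exactly $v$ times. Applying this to $\binom{n}{t}=\frac{n!}{t!\,(n-t)!}$ gives
$$\nu_p\binom{n}{t}=\sum_{i\geq 1}\left(\left\lfloor \frac{n}{p^i}\right\rfloor-\left\lfloor \frac{t}{p^i}\right\rfloor-\left\lfloor \frac{n-t}{p^i}\right\rfloor\right).$$

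\textbf{Step 2 (each summand is $0$ or $1$).} Write $t=p^iq_1+r_1$ and $n-t=p^iq_2+r_2$ with $0\leq r_1,r_2<p^i$. Then
$$\left\lfloor n/p^i\right\rfloor=q_1+q_2+\left\lfloor (r_1+r_2)/p^i\right\rfloor.$$
Hence the $i$-th summand equals $\lfloor (r_1+r_2)/p^i\rfloor\in\{0,1\}$. It equals $1$ exactly when $(t \bmod p^i)+((n-t)\bmod p^i)\geq p^i$.

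\textbf{Step 3 (identification with carries).} Add $t$ and $n-t$ in base $p$ by the schoolbook algorithm. We claim that there is a carry out of digit position $i-1$ (into position $i$) if and only if $(t\bmod p^i)+((n-t)\bmod p^i)\geq p^i$. The reason is that the numbers $t\bmod p^i$ and $(n-t)\bmod p^i$ are exactly the lowest $i$ digits of the two summands. Moreover, the carry into position $i$ is by definition the overflow of the sum of these truncations past $p^i$. This follows by a straightforward induction on $i$, tracking the carry $\varepsilon_i\in\{0,1\}$ through the recursion $a_{i-1}+b_{i-1}+\varepsilon_{i-1}=c_{i-1}+p\,\varepsilon_i$ on digits. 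Summing over $i$ then shows that $\nu_p\binom{n}{t}$ equals the total number of carries.

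The only step requiring care is Step 3: one must make the notion of ``carry'' precise and verify that the inductive recursion matches the floor expression. This is routine but is where an off-by-one error could enter. As a cross-check, one can derive the alternative form $\nu_p\binom{n}{t}=\frac{s_p(t)+s_p(n-t)-s_p(n)}{p-1}$, where $s_p$ denotes the base-$p$ digit sum. This form follows from Legendre's formula written as $\nu_p(m!)=\frac{m-s_p(m)}{p-1}$, together with the observation that each carry lowers the digit sum by exactly $p-1$.
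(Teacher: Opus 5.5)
Your proof is correct. Legendre's formula, the reduction of each summand $\lfloor n/p^i\rfloor-\lfloor t/p^i\rfloor-\lfloor (n-t)/p^i\rfloor$ to $\lfloor (r_1+r_2)/p^i\rfloor\in\{0,1\}$, and the telescoped digit recursion $\sum_{j<i}(a_j+b_j)p^j=\sum_{j<i}c_jp^j+\varepsilon_i p^i$ together establish the statement for $0\le t\le n$. The recursion forces $\varepsilon_i=\lfloor (r_1+r_2)/p^i\rfloor$ because $0\le\sum_{j<i}c_jp^j<p^i$, and the sum over $i$ is finite since $\varepsilon_i=0$ once $p^i>n$.

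The paper gives no proof to compare against. It simply cites Kummer and uses the result as a black box (with $p=2$) in its lemma on binomial coefficients modulo $2^n$. Your argument is the standard self-contained proof and fully justifies that citation.
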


\begin{lemma}\label{lem: mod 2^n}
    For a fixed $k$, the binomial coefficients $\left\{\binom{m}{k}\mid{m \in \mb Z}\right\}$ take all the residues modulo $2^n.$
\end{lemma}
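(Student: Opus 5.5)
The idea is to restrict $m$ to an arithmetic progression $m=2^s u+k$ with $u\geq 0$, and to show that $u\mapsto\binom{2^su+k}{k}$ induces a bijection of $\Z/2^n\Z$. Here $2^s$ is the largest power of $2$ with $2^s\le k$, and we assume $k\geq 1$ (for $k=0$ the statement fails, since the coefficient is constantly $1$). In place of \cite{kummer1852}, only the following elementary valuation count is needed: among $1,\dots,k$ the $2$-adic valuation $\nu_2(j)$ is at most $s$, and equality holds only for $j=2^s$, because $2^{s+1}>k$.

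\textbf{Step 1 (product expansion).} Write
$$\binom{2^su+k}{k}=\prod_{j=1}^{k}\frac{2^su+j}{j}=\prod_{j=1}^{k}\Bigl(1+\frac{2^s}{j}u\Bigr).$$
Each coefficient $2^s/j$ lies in the local ring $\Z_{(2)}$, since $\nu_2(j)\le s$. For $j=2^s$ the coefficient is $1$. For $j\neq 2^s$ it is $2^s/j$, whose valuation $s-\nu_2(j)$ is at least $1$, so it is even.

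\textbf{Step 2 (shape of the polynomial).} Expanding the product gives a polynomial
$$h(u)=1+u+2p(u),\qquad p\in\Z_{(2)}[u],$$
with $h(u)=\binom{2^su+k}{k}$ for integers $u\geq 0$. To see this, note that the factor for $j=2^s$ is exactly $1+u$. Every other factor has the form $1+2q_ju$ with $q_j\in\Z_{(2)}$, so the product of the other factors is $1+2(\cdots)$.

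\textbf{Step 3 (difference estimate).} For integers $x,y$, the polynomial $p(x)-p(y)$ is divisible by $x-y$ in $\Z_{(2)}$. Hence
$$h(x)-h(y)=(x-y)\bigl(1+2t\bigr)\quad\text{for some } t\in\Z_{(2)},$$
and therefore $\nu_2(h(x)-h(y))=\nu_2(x-y)$.

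\textbf{Step 4 (counting).} By Step 3, the values $h(0),h(1),\dots,h(2^n-1)$ are pairwise incongruent modulo $2^n$. There are $2^n$ of them, so they exhaust $\Z/2^n\Z$. Taking $m=2^su+k$ for $0\le u<2^n$ then realizes every residue.

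The main obstacle is Step 2: we must check that the unique factor of minimal valuation contributes a unit-coefficient linear term, while every other contribution is even. This is exactly where the choice of $2^s$ as the largest power of $2$ not exceeding $k$ is used. The remaining steps are a standard Hensel-type injectivity argument.
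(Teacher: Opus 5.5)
Your proof is correct, and it takes a different route from the paper's. The two proofs share the same two ingredients. The first is the arithmetic progression of step $2^s$: the paper's $A_k=k-2^N+\sum_i r_i2^{N+i}$ is your $2^su+k$ with $u=\sum_i r_i2^i-1$. The second is the observation that among $1,\dots,k$ only $j=2^s$ has $\nu_2(j)=s$, which is what lies behind the paper's estimate $\nu_2\binom{2^{N+m}}{j}\ge m+1$ for $1\le j\le k$, $j\ne 2^N$.

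The mechanism differs. The paper adds the binary digits $r_m$ one at a time. It uses Vandermonde's identity with Kummer's and Lucas's theorems to show that switching on $r_m$ changes the residue by $2^m$ times an odd number depending on the lower digits, and then proves injectivity by induction on the digits. You instead write $\binom{2^su+k}{k}=\prod_{j}\bigl(1+\tfrac{2^s}{j}u\bigr)$ as a polynomial over $\Z_{(2)}$ that is $1+u$ modulo $2$. This gives $\nu_2(h(x)-h(y))=\nu_2(x-y)$, i.e.\ injectivity modulo $2^n$, in a single Hensel-type step.

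Your route is shorter and needs neither Kummer nor Lucas. It also avoids the bookkeeping of digit-dependent odd coefficients, including the passage from the stepwise congruences modulo $2^{m+1}$ to the telescoped congruence modulo $2^n$, which the paper leaves implicit. The paper's version, in exchange, makes the contribution of each binary digit explicit.

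Your caveat that $k\ge 1$ is needed is a genuine correction. For $k=0$ the lemma is false as stated, and the paper's proof silently uses $k\ge 1$ when it picks $2^N\le k$.
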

\begin{proof}
Let $A_k := k - 2^N + \sum_{i=0}^{n-1} r_i 2^{N +i}$, where $2^N$ is the highest power of $2$ less than or equal to $k$ and $r_i \in \{0,1\}$. We claim that 
\begin{equation}\label{eq: Ak mod 2^n}
    \binom{A_k}{k} \equiv r_0 + 2a_{k,1}r_1 + 4a_{k,2}r_2 + \dots + 2^{n-1}a_{k,n-1}r_{n-1} \pmod{2^n}
\end{equation}
where each coefficient $a_{k,i}$ is an odd integer dependent on $k$ and $r_0, \dots, r_{i-1}$. For each $m \in \{1, \dots, n-1\}$, denote    $$ S_m := k - 2^N + \sum_{i=0}^{m} r_i 2^{N+i}$$
    and let $S_0 := k - 2^N + r_0 2^N$. Note that $S_{n-1} = A_k$.
    We first apply Vandermonde's identity to isolate the contribution of the last term $r_{n-1}2^{N+n-1}$:
\begin{equation}\label{eq:vander_general_1}
\binom{A_k}{k} - \binom{A_k - r_{n-1}2^{N+n-1}}{k} = \sum\limits_{j=1}^k \binom{r_{n-1}2^{N+n-1}}{j}\binom{A_k - r_{n-1}2^{N+n-1}}{k-j}
\end{equation}
Observe that $\nu_2\left(\binom{r_{n-1}2^{N+n-1}}{j}\right) \geq n$ for all $j \neq 2^N$. Consequently, every term in the sum vanishes modulo $2^n$ except for the dominant term at $j = 2^N$:
\begin{equation}\label{eq:vander_general_2}
\binom{A_k}{k} - \binom{A_k - r_{n-1}2^{N+n-1}}{k} \equiv \binom{r_{n-1}2^{N+n-1}}{2^N} \binom{A_k - r_{n-1}2^{N+n-1}}{k-2^N} \pmod{2^n}
\end{equation}
Because $2^N \leq k < 2^{N+1}$, the binary expansion of $k-2^N$ has a $0$ at the $2^N$-th position, and its highest nonzero bit appears strictly to the right of the $(N+1)$-st position in the binary expansion. Since $A_k - r_{n-1}2^{N+n-1}$ is constructed by adding multiples of $2^N$ or higher to $k-2^N$, it preserves the lower-order bit structure of $k-2^N$. Thus, by Lucas's theorem we have
\[
\binom{A_k - r_{n-1}2^{N+n-1}}{k-2^N} \equiv 1 \pmod 2
\]
This implies the coefficient is an odd integer, say $a_{k, n-1} \in \mathbb{Z}$. Also, $\binom{r_{n-1}2^{N+n-1}}{2^N} \equiv 2^{n-1}r_{n-1} \pmod{2^n}$. Substituting these back into \eqref{eq:vander_general_2} yields:
\begin{equation}\label{eq:step_n_minus_1}
\binom{A_k}{k} - \binom{A_k - r_{n-1}2^{N+n-1}}{k} \equiv 2^{n-1}a_{k,n-1}r_{n-1} \pmod{2^n} 
\end{equation}
We repeat this process inductively. For $1 \leq m \leq n-2$, Vandermonde's identity yields:
\begin{equation}\label{eq:vander_induction_step}
\binom{S_m}{k} - \binom{S_{m-1}}{k} = \sum\limits_{j=1}^k \binom{r_m 2^{N+m}}{j}\binom{S_{m-1}}{k-j}
\end{equation}
Analyzing modulo $2^{m+1}$, the valuation satisfies $\nu_2\left(\binom{r_m 2^{N+m}}{j}\right) \geq m+1$ for all $j \neq 2^N$. Lucas's theorem ensures that $\binom{S_{m-1}}{k-2^N}$ is odd. Evaluating the surviving term at $j=2^N$ gives:
\begin{equation}\label{eq:step_m}
\binom{S_m}{k} - \binom{S_{m-1}}{k} \equiv 2^m a_{k,m} r_m \pmod{2^{m+1}} \quad \text{where } a_{k,m} \equiv 1 \pmod 2
\end{equation}
where $a_{k,m}$ is an odd integer dependent on $k$ and the lower choices $r_0, \dots, r_{m-1}$.

Descending all the way down to the final two steps, we obtain the following
\begin{equation}\label{eq:step_1}
\binom{k-2^N+r_02^N+r_12^{N+1}}{k} -\binom{k-2^N+r_0 2^N}{k} \equiv 2a_{k,1}r_1 \pmod 4  
\end{equation}
where $a_{k,1} \equiv 1 \pmod 2$, and
\begin{equation}\label{eq:step_0}
 \binom{k-2^N+r_02^N}{k} -\binom{k-2^N}{k} = r_0
\end{equation}

 Telescoping the differences by combining \eqref{eq:step_n_minus_1}, \eqref{eq:step_m} for all intermediate stages, \eqref{eq:step_1} and \eqref{eq:step_0}, recovers \Cref{eq: Ak mod 2^n}.

The map $\Phi_k(r_0,r_1, \dots, r_{n-1}) := \binom{A_k}{k} \pmod{2^n}$ defines a function from  $\{0,1\}^n$ to $\mathbb{Z} / 2^n \mathbb{Z}$. Since both sets contain exactly $2^n$ elements, proving that $\Phi_k$ is injective is enough to conclude that $\Phi_k$ is a bijection.

Suppose there exist $(r_0, \dots, r_{n-1})$ and $(r_0', \dots, r_{n-1}')$ such that $\Phi_k(r_0, \dots, r_{n-1}) \equiv \Phi_k(r_0', \dots, r_{n-1}') \pmod{2^n}$.  
$$
r_0 + \sum_{i=1}^{n-1} 2^i a_{k,i}r_i \equiv r_0' + \sum_{i=1}^{n-1} 2^i a_{k,i}'r_i' \pmod{2^n}
$$
We show that $r_j = r_j'$ for all $j$ using strong induction. For the base case $j=0$, reducing the total congruence modulo $2$ eliminates all terms scaled by $2^i$. Thus
  $$
  r_0 \equiv r_0' \pmod 2 \implies r_0 = r_0' \quad (\text{since } r_0, r_0' \in \{0,1\})
  $$
 Assume that $r_m = r_m'$ holds true for all indices $m < i$. Because the coefficients $a_{k,i}$ are completely determined by $k$ and the lower-order bits $(r_0, \dots, r_{i-1})$, our inductive hypothesis ensures that $a_{k,i} = a_{k,i}'$. Subtracting the identical lower order summation blocks $\sum_{m=0}^{i-1} 2^m a_{k,m} r_m$ from both sides of the main equation obtains:
$$
\sum_{m=i}^{n-1} 2^m a_{k,m}r_m \equiv \sum_{m=i}^{n-1} 2^m a_{k,m}'r_m' \pmod{2^n}
$$
Evaluating the remaining expression modulo $2^{i+1}$ yields:
$$
2^i a_{k,i} r_i \equiv 2^i a_{k,i}' r_i' \pmod{2^{i+1}}
$$
Since $a_{k,i} = a_{k,i}'$, we can divide the entire congruence relation by $2^i$:
$$
a_{k,i} r_i \equiv a_{k,i} r_i' \pmod 2
$$
Because we proved earlier via Lucas's theorem that $a_{k,i}$ is always odd, we have
$$
r_i \equiv r_i' \pmod 2 \implies r_i = r_i'
$$
By induction, this evaluation holds for all $j \in \{0, 1, \dots, n-1\}$. Thus, the two bit-strings must be completely identical, confirming that $\Phi_k$ is an injective mapping and establishing our bijection.
\end{proof}
Combining the preceding discussion with \Cref{lem: mod 2^n}, we arrive at our main result.

\begin{proof}[Proof of \Cref{thm: every cohomology class of Q_2^n is realizable}]
The cohomology classes whose degrees are $0 \text{~mod~} 4$, are of the form $\alpha e^k$, for some integer $\alpha$, and those  whose degrees are $2 \text{~mod~} 4$, are of the form $ be^{k}$ or $ ae^{k}$ or $ (a+b)e^{k}$. Observe that $$c_{2k} \left(\rho_1^{\oplus m}\right)= \binom{m}{k} e^{k} \quad \text{~and~}\quad  c_{2k+1}\left(\rho_1^{\oplus m-1} \oplus\epsilon\right) = \binom{m}{k} be^{k}; $$
$$ c_{2k+1}\left(\rho_1^{\oplus m-1} \oplus\eta\right) = \binom{m}{k} ae^{k} \quad \text{~and~} \quad  c_{2k+1}\left(\rho_1^{\oplus m-1} \oplus\epsilon\eta\right) = \binom{m}{k} (a+b)e^{k}.$$
For the elements of $H^{4k+2}(Q_{2^n}, \mb Z) = \mb Z_2 \times \mb Z_2$ to be realizable, the binomial coefficient $\binom{m}{k}$ has to be odd and taking $m=k$ does the job. For $H^{4k}(Q_{2^n}, \mb Z) = \mb Z_{2^{n}}$, \Cref{lem: mod 2^n} implies that all classes are realizable.
\end{proof}
\section{Realizable Cohomology Classes with $\Z_2$ Coefficients}\label{sec: realizable SW classes}
For an orthogonal complex representation $\pi$ of $G$, there exists a unique real repersentation $\pi_0$, upto isomorphism, so that $\pi\cong \pi_0\otimes_{\mathbb{R}}\mathbb{C}$. We define the Stiefel-Whitney class for $\pi$ as $w_i(\pi)=w_i(\pi_0)$.
%Consider the map $\kappa: H^*(G,\Z)\to H^*(G, \Z_2)$ given by $\kappa(c_i(\pi))=w_{2i}(S(\pi))$.
Recall that
$$\mathcal{H}(G,\Z_2)=\bigcup_{k\geq 1}H^k(G,\Z_2),$$
and
$$\mathcal{RW}(G)=\{ w_i(\pi)\mid \pi\,\, \text{is an orthogonal representation of $G$},\,\,i\geq 1\}.$$

\vspace{1ex}
\subsection{The Realizable Classes $\mathcal{RW}(Q_{2^n})$}~\vspace{1ex}

\begin{theorem}
We have $\mathcal{RW}(Q_{2^n})=\mathcal{H}(Q_{2^n},\Z_2)$.
\end{theorem}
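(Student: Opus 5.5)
The plan is to use the $4$-periodicity of $H^*(Q_{2^n},\Z_2)$, which comes from the free action of $Q_{2^n}$ on $S^3$ via $\rho_1$. Every class in degree $4j+i$ with $0\le i\le 3$ should then be obtained as a Stiefel--Whitney class of a sum $\tau\oplus m\,\rho_1^{\R}$. Here $\tau$ is a small sum of real linear characters, and $\rho_1^{\R}$ is the $4$-dimensional real representation underlying $\rho_1$; it is orthogonal because $\rho_1$ is quaternionic, so $\rho_1^{\R}\otimes_\R\C\cong 2\rho_1$. I would first record the structure of the mod $2$ cohomology. It is known that $H^*(Q_{2^n},\Z_2)$ has Poincar\'e series $(1+2t+2t^2+t^3)/(1-t^4)$. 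It is generated by $x,y$ in degree $1$ and a class $\bar e$ in degree $4$, and cup product with $\bar e$ is an isomorphism $H^k\to H^{k+4}$ for $k\ge 1$, and also for $k=0$. The relations are $x^2+xy+y^2=0$ and $x^2y+xy^2=0$ for $n=3$; for $n\ge4$ they are $xy=0$ and $x^3=y^3$, possibly up to relabelling.

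The key input is the identity $w(\pi_{\R})\equiv c(\pi)\pmod 2$ for the realification of a complex representation. Since $c(\rho_1)=1+e$ by \Cref{thm: total CC of Q_2^n}, this gives $w(\rho_1^{\R})=1+\bar e$. Here $\bar e$ is the mod $2$ reduction of $e$, which is nonzero because $e$ generates $H^4(Q_{2^n},\Z)\cong C_{2^n}$ by \Cref{lem: e is generator of H^4}. I would check that $\bar e$ is indeed the periodicity generator, using the Bockstein sequence and the fact that $H^4(Q_{2^n},\Z)\to H^4(Q_{2^n},\Z_2)$ is onto. The real characters $\epsilon,\eta,\epsilon\eta$ have first Stiefel--Whitney classes equal to the three nonzero elements $x,y,x+y$ of $H^1$, since $w_1$ identifies $\Hom(G,\{\pm1\})$ with $H^1(G,\Z_2)$.

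Next comes the case-by-case realization. Let $\tau$ have dimension at most $3$, so that $w_{i'}(\tau)=0$ for $i'>3$. Then
$$w_{4j+i}(\tau\oplus m\rho_1^{\R})=\binom{m}{j}\,w_i(\tau)\,\bar e^{\,j},$$
because the only way to split $4j+i=4j'+i'$ with $0\le i'\le 3$ is $j'=j$. Taking $m=j$ makes the coefficient equal to $1$. By periodicity it therefore suffices to realize every nonzero class of $H^i$ for $i\le 3$ as $w_i(\tau)$; for $i=0$ I take $\tau=0$, which gives $\bar e^{\,j}$. Degree $1$ is handled by the three characters. For degree $2$, the choices $\tau=\lambda\oplus\mu$ give the products $w_1(\lambda)w_1(\mu)$, namely $x^2,y^2,xy,x(x+y),\dots$, while $\tau=\epsilon\oplus\eta\oplus\epsilon\eta$ gives
$$w_2=xy+x(x+y)+y(x+y)=x^2+xy+y^2.$$
For degree $3$, the group $H^3$ is $\Z_2$, and I would take $\tau=\lambda\oplus\lambda\oplus\mu$ or $\epsilon\oplus\eta\oplus\epsilon\eta$, aiming for $w_3$ to be a nonzero cubic such as $x^2y$ or $x^3$.

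The main obstacle is verifying that these finitely many choices really exhaust all three nonzero elements of $H^2$ and the generator of $H^3$, under the correct relations. This needs to be done separately for $n=3$ and for $n\ge4$. For instance, when $n\ge4$ we have $xy=0$, and $x^2+y^2$ must appear as $w_2(\epsilon\oplus\eta\oplus\epsilon\eta)$; one must also check that $x^2$ and $y^2$ are nonzero and distinct. If some class is missed, I would enlarge $\tau$ by the real forms of the $\rho_k$ with $k$ even. These factor through $D_{2^{n-1}}$ and have $w_1=x$ or a similar class, and their $w_2$ can be computed by restriction to the cyclic subgroups, in the same way as in \Cref{sec: chern classes of d2n}.
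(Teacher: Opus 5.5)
Your proposal is correct and is essentially the paper's argument: you realize the nonzero classes of degree at most $3$ by sums of the linear characters, then shift them by $\bar e^{\,k}$ (the paper's $E^k=w_{4k}(S(\rho_1)^{\oplus k})$) by adding $k$ copies of the realification of $\rho_1$, whose total Stiefel--Whitney class is $1+\bar e$. Your candidate choices do close the case check: $w_3(\lambda\oplus\lambda\oplus\mu)$ with $\lambda=\mu=\eta$ gives $X^3\neq 0$ for $n\ge 4$, and with $\lambda\ne\mu$ gives $x^2y\neq 0$ for $Q_8$, while $w_2(\epsilon\oplus\eta\oplus\epsilon\eta)$ or $w_2(2\epsilon)$ gives $X^2+Y^2$; so your version is more careful than the paper's, which uses the $n\ge 4$ presentation for all $n$ (its $\eta^{\oplus 3}$ choice gives $X^3E^k=0$ for $Q_8$) and never realizes $(X^2+Y^2)E^k$.
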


\begin{proof}
The cohomology ring of $Q_{2^n}$ with respect to the ring $\Z_2$ is given by 
$$H^*(Q_{2^n},\Z_2)=\dfrac{\Z_2[X,Y,E]}{\langle XY,X^3+Y^3\rangle},$$
where $X=w_1(\eta), Y=w_1(\epsilon\eta)$ and $E=w_4(S(\rho_1))$.

We compute
$$w_2(S(\eta))=w_2(\eta\oplus \eta^{-1})=w_1(\eta)^2=X^2.$$
Similarly, one obtains 
$$w_2(S(\epsilon\eta))=w_1(\epsilon\eta)^2=Y^2.$$

$$H^{4k}(Q_{2^n}, \Z_2)=\langle E^k \rangle$$

$$w_{4k}(S(\rho_1)^{\oplus k})=E^k.$$

We have $H^{4k+2}(Q_{2^n},\Z_2)=\langle X^2E^k, Y^2E^k \rangle$.

$$w_{4k+2}\left(S(\eta)\oplus S(\rho_1)^{\oplus k})\right)=X^2E^k.$$
Similarly, we have 
$$w_{4k+2}\left(S(\epsilon\eta)\oplus S(\rho_1)^{\oplus k})\right)=Y^2E^k.$$

On the other hand, we have 
$$H^{4k+1}(Q_{2^n},\Z_2)=\{0, XE^k, YE^k, (X+Y)E^k\}.$$
We have $w_{4k+1}(\eta\oplus S(\rho_1)^{\oplus k})=XE^k$, $w_{4k+1}(\epsilon\eta\oplus S(\rho_1)^{\oplus k})=YE^k$, and $w_{4k+1}(\epsilon\oplus S(\rho_1)^{\oplus k})=(X+Y)E^k.$ Also, $H^{4k+1}(Q_{2^n},\Z_2)=\langle X^3E^k \rangle $. One computes that $w_{4k+3}(\eta^{\oplus 3}\oplus S(\rho_1)^{\oplus k})=X^3E^k$.
\end{proof}

\vspace{1ex}
\subsection{The Realizable Classes $\mathcal{RW}(D_{2n})$}
~\vspace{1ex}

Consider the representation 
\begin{equation}
\pi=m_1\mathbb{1}\oplus m_r\chi_r\oplus m_s\chi_s\oplus m_{rs}\chi_{rs}\oplus m_{\sigma}\sigma_1.    
\end{equation}
Then we have 
$$w(\pi)=(1+y)^{m_r}(1+x)^{m_s}(1+x+y)^{m_{rs}}(1+x+w)^{m_{\sigma}}.$$
We have $(1+y)(1+x+y)=(1+x+2y+y(x+y))=(1+x)$.
This gives
\begin{align*}
w(\pi) &= (1+y)^{m_r+m_s}(1+x+y)^{m_{rs}+m_s}(1+x+w)^{m_{\sigma}}\\
&=(1+y)^{d_1}(1+x+y)^{d_2}(1+x+w)^{d_3}\\
&=(1+y)^{d_1}(1+x')^{d_2}(1+x+w)^{d_3}
\end{align*}
where $d_1=m_r+m_s$, $d_2=m_{rs}+m_s$ and $d_3=m_{\sigma}$.

\begin{theorem}
We have $\mathcal{RW}(D_{2n})\neq \mathcal{H}(D_{2n}, \mathbb{Z}_2)$.    
\end{theorem}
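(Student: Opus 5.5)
We will exhibit one explicit mod $2$ class that no orthogonal representation can produce. The setup just before the statement reduces the problem to a three-parameter family. Every orthogonal representation $\pi$ of $D_{2n}$ built from $\mathbb{1},\chi_r,\chi_s,\chi_{rs},\sigma_1$ has total Stiefel--Whitney class
$$w(\pi)=(1+y)^{d_1}(1+x')^{d_2}(1+x+w)^{d_3},$$
where $x'=x+y$, $d_1,d_2,d_3\ge 0$ and $w=w_2(\sigma_1)$. For $n$ even, the relevant relation in $H^*(D_{2n},\Z_2)$ is $y(x+y)=0$, i.e.\ $yx'=0$ (the case $n\equiv 0\pmod 4$); the case $n\equiv 2 \pmod 4$ is handled the same way after the appropriate change of degree-one generators. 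The other $\sigma_k$ have total classes of the form $1+x+k^2w$ or $1+x$, so they change only $d_3$ or $d_2$. We may therefore assume all representations are of the displayed form. For $n$ odd the argument is analogous in degree $4$, using the $2$-primary part, where $H^*$ is detected on $\Gamma=\langle s\rangle$.

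In analogy with \Cref{h6}, the candidate is a class in degree $3$, namely
$$\theta=x'^3+x'w.$$
The key steps are as follows.

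\begin{enumerate}
\item Expand $w_3(\pi)$ in the additive basis $\{y^3,x'^3,yw,x'w\}$ of $H^3(D_{2n},\Z_2)$ (for $D_8$). Using $yx'=0$, the same counting argument as in \Cref{h6} gives that the coefficient of $x'^3$ is $\binom{d_2+d_3}{3}$, the coefficient of $y^3$ is $\binom{d_1+d_3}{3}$, and the coefficients of $x'w$ and $yw$ are $d_3(d_2+d_3-1)$ and $d_3(d_1+d_3-1)$ modulo $2$.
\item Suppose $w_3(\pi)=\theta$. Then $d_3(d_2+d_3-1)$ is odd, which forces $d_3$ odd and $d_2+d_3$ even.
\item But $\binom{d_2+d_3}{3}$ must also be odd. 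By Lucas's theorem, $\binom{\beta}{3}$ odd requires $\beta\equiv 3\pmod 4$, so $\beta$ is odd. This contradicts $d_2+d_3$ even, hence $\theta$ is not realizable.
\end{enumerate}

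This gives $\mathcal{RW}(D_8)\neq\mathcal{H}(D_8,\Z_2)$. For general even $n$, we would pass through the restriction $D_8\le D_{8k}$ with $k$ odd, using the mod $2$ analogues of \Cref{realize} and \Cref{surjective}; the same restriction argument then propagates the result.

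The main obstacle is justifying that $w$, $x$, $y$ satisfy only the relation $yx'=0$ in degree $3$, so that the listed monomials are genuinely linearly independent. This needs the known mod $2$ cohomology $H^*(D_{2n},\Z_2)=\Z_2[x,y,w]/(y(x+y))$ for $n\equiv 0\pmod 4$, where $w$ is a polynomial generator. Here $w=w_2(\sigma_1)$ can be taken as this generator because it restricts nontrivially to the Klein four subgroups. For $n\equiv 2\pmod 4$, and for $n$ odd, one would instead pick a suitable Klein subgroup, or $\Gamma$, and run the same parity argument there.
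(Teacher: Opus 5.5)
For $n\equiv 0\pmod 4$ your argument is correct and is essentially the paper's. You use the same reduction to $(1+y)^{d_1}(1+x')^{d_2}(1+x+w)^{d_3}$ and the same parity clash between the $x'w$-coefficient $d_3(d_2+d_3-1)\equiv d_2d_3$ and the cubic coefficient $\binom{d_2+d_3}{3}$. The paper's witness $x^3+xw=x'^3+y^3+x'w+yw$ has the same two relevant coordinates as your $\theta$, and your basis $\{x'^3,y^3,x'w,yw\}$, adapted to $yx'=0$, is cleaner bookkeeping. A few small repairs are needed there:
\begin{itemize}
\item For $k$ even, $w(\sigma_k)=1+x=(1+y)(1+x')$ raises both $d_1$ and $d_2$, not only $d_2$. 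This, and $w(\sigma_k)=1+x+w$ for $k$ odd, follow by detection on $\langle r^{n/2},s\rangle$ and $\langle r^{n/2},rs\rangle$.
\item What makes $w_2(\sigma_1)$ a legitimate polynomial generator is $w_2(\sigma_1)\notin\operatorname{span}(x^2,xy)$, not merely that it restricts nontrivially. You can see this on $\langle r^{n/2},s\rangle$, where $x\mapsto\tau$ and $y\mapsto 0$ but $w_2(\sigma_1)\mapsto\zeta(\zeta+\tau)$, with $\zeta,\tau$ dual to $r^{n/2},s$.
\item Since $\Z_2[x,y,w]/(y(x+y))$ is valid for every $n\equiv 0\pmod 4$, the direct computation already covers all such $n$. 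Your detour through $D_8\le D_{8k}$ only reaches $n\equiv 4\pmod 8$.
\end{itemize}

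The genuine gap is in the other residues of $n$, where both of your claims fail.

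For $n$ odd, $R$ has odd order, so $H^*(D_{2n},\Z_2)\cong H^*(\Gamma,\Z_2)=\Z_2[x]$ with $x=w_1(\chi_s)$, and $x^k=w_k(k\chi_s)$. Hence $\mathcal{RW}(D_{2n})=\mathcal{H}(D_{2n},\Z_2)$, and the statement is simply false for odd $n$; no degree-$4$ argument can exist. The theorem must be read for even $n$; the paper's proof tacitly takes $n\equiv 0\pmod 4$ by using $w$ and $y(x+y)=0$.

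For $n\equiv 2\pmod 4$, $D_{2n}\cong D_n\times C_2$ with $n/2$ odd. So restriction to $V=\langle s,r^{n/2}\rangle$ is an isomorphism, and $H^*(D_{2n},\Z_2)=\Z_2[x',y]$ is a polynomial ring. No change of degree-one generators produces a relation, and $\sigma_1|_V\cong(\chi_r\oplus\chi_{rs})|_V$ gives $w_2(\sigma_1)=yx'$. Your $\theta=x'^3+x'^2y$ is then realizable: it equals $w_3(\chi_r\oplus 3\chi_{rs})$, the degree-$3$ part of $(1+y)(1+x')^3$. In this case you need a Klein-four type witness such as $x'^3+y^3$. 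Write $w(\pi)=(1+x')^{a}(1+y)^{b}(1+x'+y)^{c}$. Odd $x'^3$- and $y^3$-coefficients force $a+c\equiv b+c\equiv 3\pmod 4$. Then the $x'^2y$-coefficient $\binom a2 b+abc+\binom c2 b+c\binom a2+\binom c3$ is odd in each of the four cases $c\bmod 4$, so $x'^3+y^3$ is not realizable.
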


\begin{proof}
    
We show that $x^3+xw\notin \mathcal{RW}(D_{2n})$.
We have
\begin{align*}
&[w(\pi)]_{3^{-}}=  \left(1+d_1y+\binom{d_1}{2}y^2+\binom{d_1}{3}y^3\right)\left( 1+d_2x'+\binom{d_2}{2}x'^2+\binom{d_2}{3}x'^3\right)\\
&\left(1+d_3(x+w)+\binom{d_3}{2}(x+w)^2+\binom{d_3}{3}(x+w)^3\right)  \\
=& \left(1+d_1y+\binom{d_1}{2}y^2+\binom{d_1}{3}y^3+d_2(x+y)+\binom{d_2}{2}(x^2+y^2)+\binom{d_2}{3}(x^3+y^3) \right)\\
&\left(1+d_3(x+w)+\binom{d_3}{2}(x^2+w^2)+\binom{d_3}{3}x^3\right)\\
=& A_1x^3+A_2x^2y+A_3xy^2+A_4y^3+A_5xw.
\end{align*}
The coefficients $A_i$ are given by
\begin{align*}
A_1 &= \binom{d_2}{3}+d_2\binom{d_3}{2}+\binom{d_3}{3}+d_3\binom{d_2}{2}\\
A_2&= d_1\binom{d_3}{2}+d_2\binom{d_3}{2}\\
A_3&=d_3\binom{d_1}{2}+d_3\binom{d_2}{2}\\
A_4&=\binom{d_1}{3}+\binom{d_2}{3}\\
A_5&=d_2d_3.
\end{align*}
If $A_5$ is odd, then both $d_2$ and $d_3$ are odd. But in that case $A_1=\binom{d_2+d_3}{3}$ is even. Therefore $x^3+xw\notin \mathcal{RW}(D_{2n})$.

Note that if $m\equiv 1\pmod 4$, then both $\binom{m}{2}$ and $\binom{m}{3}$ are even. So, if we choose $d_i\equiv 1 \pmod 4$ for $1\leq i\leq 3$, then the terms $A_2, A_3, A_4$ will be even. 
\end{proof}

 \bibliographystyle{alpha} 
\bibliography{mybib}

@book{adem2013cohomology,
  title={Cohomology of finite groups},
  author={Adem, Alejandro and Milgram, R James},
  volume={309},
  year={2013},
  publisher={Springer Science \& Business Media}
}

@article{handel1993products,
  title={On products in the cohomology of the dihedral groups},
  author={Handel, David},
  journal={Tohoku Mathematical Journal, Second Series},
  volume={45},
  number={1},
  pages={13--42},
  year={1993},
  publisher={Mathematical Institute, Tohoku University}
}

@book{hirzebruch1966topological,
  title={Topological methods in algebraic geometry},
  author={Hirzebruch, Friedrich and Borel, Armand and Schwarzenberger, RLE},
  volume={175},
  year={1966},
  publisher={Springer Berlin-Heidelberg-New York}
}

@article{hayami2002cohomology,
  title={Cohomology ring of the generalized quaternion group with coefficients in an order},
  author={Hayami, Takao and Sanada, Katsunori},
  journal={Communications in Algebra},
  volume={30},
  number={8},
  pages={3611--3628},
  year={2002},
  publisher={Taylor \& Francis}
}

@article{pitt1973free,
  title={Free actions of generalized quaternion groups on spheres},
  author={Pitt, David},
  journal={Proceedings of the London Mathematical Society},
  volume={3},
  number={1},
  pages={1--18},
  year={1973},
  publisher={Oxford University Press}
}

@article {kummer1852,
    AUTHOR = {Kummer, E. E.},
     TITLE = {\"Uber die {E}rg\"anzungss\"atze zu den allgemeinen
              {R}eciprocit\"atsgesetzen},
   JOURNAL = {J. Reine Angew. Math.},
  FJOURNAL = {Journal f\"ur die Reine und Angewandte Mathematik. [Crelle's
              Journal]},
    VOLUME = {44},
      YEAR = {1852},
     PAGES = {93--146},
      ISSN = {0075-4102,1435-5345},
   MRCLASS = {99-04},
  MRNUMBER = {1578793},
       DOI = {10.1515/crll.1852.44.93},
       URL = {https://doi.org/10.1515/crll.1852.44.93},
}

@article{atiyah1961characters,
  title={Characters and cohomology of finite groups},
  author={Atiyah, Michael F},
  journal={Publications Math{\'e}matiques de l'IH{\'E}S},
  volume={9},
  pages={23--64},
  year={1961}
}

@article{joshi2026characteristic,
  title={Characteristic Classes Of Representations Of Lie Groups},
  author={Joshi, Rohit and Spallone, Steven},
  journal={arXiv preprint arXiv:2602.02145},
  year={2026}
}

@article{naolekar2012realizing,
  title={Realizing cohomology classes as Euler classes},
  author={Naolekar, Aniruddha C},
  journal={Mathematica Slovaca},
  volume={62},
  number={5},
  pages={949--966},
  year={2012},
  publisher={Springer}
}

\end{document}